\definecolor{daRemarkgreen}{rgb}{0,0.45,0}
\newcommand{\cat}[1]{\mathbf{#1}}
\def\matrixobject@{%
   \edef \next@{={\DirectionfromtheDirection@ }}%
   \expandafter \toks@ \next@ \plainxy@
   \let\xy@@ix@=\xyq@@toksix@
   \xyFN@ \OBJECT@}
\let\xy@entry@@norm=\entry@@norm
\def\entry@@norm@patched{%
   \let\object@=\matrixobject@
   \xy@entry@@norm }
\theoremstyle{definition}
\numberwithin{equation}{section}
\theoremstyle{plain}
\newtheorem{Theorem}{Theorem}[section]
\newtheorem{Cor}[Theorem]{Corollary}
\newtheorem{Prop}[Theorem]{Proposition}
\newtheorem{Lemma}[Theorem]{Lemma}
\theoremstyle{definition}
\newtheorem{Def}[Theorem]{Def}
\newtheorem{Examples}[Theorem]{Examples}
\newtheorem{Example}[Theorem]{Example}
\newtheorem{Remark}[Theorem]{Remark}
\renewcommand{\phi}{\varphi}
\newcommand{\A}{{\mathcal A}}
\newcommand{\B}{{\mathcal B}}
\newcommand{\C}{{\mathcal C}}
\newcommand{\F}{{\mathcal F}}
\newcommand{\G}{{\mathcal G}}
\newcommand{\J}{{\mathcal J}}
\newcommand{\V}{{\mathcal V}}
\newcommand{\X}{{\mathcal X}}
\newcommand{\atwo}{{\mathbf 2}}
\newcommand{\Cat}{{\cat{Cat}}}
\newcommand{\TOb}{{\cat{TOb}}}
\newcommand{\CatGph}{{\cat{Cat}\textnormal{-}\mathbf{Gph}}}
\newcommand{\CAT}{{\cat{CAT}}}
\newcommand{\twocat}{{\textnormal{2-}\cat{Cat}}}
\newcommand{\VCAT}{\V\textnormal{-}{\cat{CAT}}}
\newcommand{\ff}{\Phi{\textnormal{-}\mathbf{Lim}}}
\newcommand{\Twocat}{{\textnormal{2-}\cat{CAT}}}
\newcommand{\Bicat}{{\mathbf{Bicat}}}
\newcommand{\Set}{{\cat{Set}}}
\newcommand{\Gph}{{\cat{Gph}}}
\newcommand{\cSk}{{\cat{C\textnormal{-}Sk}}}
\newcommand{\tSk}{{\cat{T\textnormal{-}Sk}}}
\newcommand{\Lex}{{\cat{Lex}}}
\newcommand{\Acc}{{\textbf{ACC}}}
\newcommand{\Adj}{{\cat{Adj}}}
\newcommand{\Ref}{{\cat{Ref}}}
\newcommand{\Coref}{{\cat{Coref}}}
\newcommand{\AdjEq}{{\cat{AdjEq}}}
\newcommand{\Reg}{{\cat{Reg}}}
\newcommand{\Lali}{{\cat{Lali}}}
\newcommand{\SMonCat}{{\cat{SMonCat}}}
\newcommand{\MonCat}{{\cat{MonCat}}}
\newcommand{\UMonCat}{{\cat{UMonCat}}}
\newcommand{\PsTAlg}{{\cat{Ps}\textnormal{-}\cat{T}\textnormal{-}\cat{Alg}}}
\newcommand{\LaxTAlg}{{\cat{Lax}\textnormal{-}\cat{T}\textnormal{-}\cat{Alg}}}
\newcommand{\ColaxTAlg}{{\cat{Colax}\textnormal{-}\cat{T}\textnormal{-}\cat{Alg}}}
\newcommand{\Lp}{\textbf{LP}}
\newcommand{\Lpm}{\textbf{LP}_{\mathcal M}}
\newcommand{\TAlg}{{\cat{T}\textnormal{-}\cat{Alg}}}
\newcommand{\Alg}{\cat{Alg}}
\newcommand{\ALim}{{A\textbf{-Lim}}}
\newcommand{\cc}{\ensuremath{\mathcal C}\xspace}
\newcommand{\cv}{\ensuremath{\mathcal V}\xspace}
\def\blue{\color{blue}}
\def\black{\color{black}}
\begin{document}
\leftmargini=2em \title{Accessible aspects of 2-category theory}
\author{John Bourke}
\address{Department of Mathematics and Statistics, Masaryk University, Kotl\'a\v rsk\'a 2, Brno 61137, Czech Republic}
\email{bourkej@math.muni.cz}
\thanks{The author acknowledges the support of the Grant Agency of the Czech Republic under the grant 19-00902S}
\keywords{Accessible category, $2$-category, pseudomorphism.}
\date{\today}
\maketitle

\maketitle
\begin{abstract}
Categorical structures and their pseudomaps rarely form locally presentable $2$-categories in the sense of $\Cat$-enriched category theory.  However, we show that if the categorical structure in question is sufficiently weak (such as the structure of monoidal, but not strict monoidal, categories) then the $2$-category in question is accessible. 
Furthermore, we explore the flexible limits that such $2$-categories possess and their interaction with filtered colimits.
\end{abstract}

\section{Introduction}

Accessible categories, first studied in depth by Makkai and Par{\'e} in the late 1980s \cite{Makkai1989Accessible},  describe the categorical properties of structures definable using infinitary first order logic.  Locally presentable categories, those accessible ones which are complete, capture categories of essentially algebraic structures and were introduced earlier by Gabriel and Ulmer \cite{Gabriel1971Lokal}.  In addition to their strong connections with logic and algebra, accessible and locally presentable categories have also come to play important roles in homotopy theory --- see, for instance, \cite{Beke2000Sheafifiable} and \cite{Lurie}.


On the other hand two-dimensional universal algebra, as investigated by the Australian school of category theory in the late 1980's \cite{Bird1989Flexible, Blackwell1989Two-dimensional}, concerns categories, rather than sets, with algebraic structure.  These form $2$-categories rather than mere categories.  A good example is the $2$-category $\MonCat_p$ of monoidal categories, strong monoidal functors and monoidal natural transformations.  The subscript $p$ indicates that, as morphisms, one normally wishes to consider not the strict structure preserving maps, but the pseudomaps, which preserve the algebraic structure only up to coherent isomorphism.  Focusing on these pseudomorphisms makes the subject subtler --- for instance, the $2$-category $\MonCat_p$ does not admit all equalisers, and so is is certainly not locally presentable as a category.  However, it does admit \emph{flexible limits} \cite{Bird1989Flexible}, which include split idempotents, and these generate all bilimits --- that is, weak $2$-dimensional limits.


The purpose of the present paper is to draw a connection between accessibility and two-dimensional universal algebra.  In particular we will show that many $2$-categories of categories with structures and their pseudomaps, such as $\MonCat_p$, are actually \emph{accessible in the classical sense}.  We will see that accessibility is intimately related to the structures involved being \emph{weak} (a.k.a.~\emph{cofibrant} or \emph{flexible}).  For instance, the $2$-category of strict monoidal categories and strong monoidal functors fails to be accessible --- in fact, its idempotent completion is $\MonCat_p$.  Furthermore, we will explore the interaction between the filtered colimits and flexible limits that $2$-categories such as $\MonCat_p$ possess. 

One exciting aspect of this is that much of the story appears to hold also for higher dimensional categories.  In parallel work with Lack and Vok\v{r}\'{i}nek \cite{Bourke2020Adjoint} we move in this direction, showing that a variety of simplicially enriched categories of $(\infty,1)$-categories with structure, and the pseudomaps between them, are accessible with flexible limits.  This fits directly into the program of Riehl and Verity \cite{Riehl2015,Riehl2017Fibrations, Riehl2019Elements} on $\infty$-cosmoi, which uses classical enriched category theory \cite{Kelly1982Basic} as a means to develop the theory of $\infty$-categories.  Furthermore in \cite{Bourke2020Adjoint} we develop weak adjoint functors theorems and results on the existence of weak colimits in the setting of categories enriched in a monoidal model category, with new applications both in the present $2$-categorical and the $\infty$-categorical settings.

Before giving a summary of the results of the paper, let me say a few words about its genesis.  In early 2015 I read Makkai's papers on \emph{generalised sketches} \cite{Makkai1997Sketches1, Makkai1997Sketches2} and was surprised to observe that the results there already imply that several of the key examples of $2$-categories of pseudomaps are accessible.  Lack and Rosicky`s paper \emph{Enriched weakness} \cite{Lack2012Enriched}, another key influence, furthermore implied that bicategories and normal pseudofunctors form an accessible category.  Discussing this with Makkai in Budapest in the summer of 2015 we began to work out some of the general results.   One of the key technical results, Proposition~\ref{prop:acc2}, is due to Makkai in the case $\C = \Cat$. His main aim at that time was to establish a general result for two-dimensional limit sketches.  This is touched on in the final section, and we will give a more detailed treatment of two-dimensional limit theories in a followup paper.

%

Let me now give a more detailed outline of the results of the present paper.  In keeping with how it came about, Section~\ref{sect:sketches} is devoted to Makkai's generalised sketches.  We use these to describe the category $\TOb$ of categories with a terminal object and terminal object preserving functors as the full subcategory of injectives in a presheaf category --- in particular $\TOb$ is then accessible.  This baby example easily extends to cover other doctrines, such as categories with limits and/or colimits possibly satisfying exactness properties, but not so easily to examples such as monoidal categories.  

In order to make the paper as accessible as possible, we devote Section~\ref{sect:background} to background material on $2$-category theory.  Section~\ref{sect:lp} introduces a class of accessible $2$-categories $\Lpm$ that capture the properties that examples like $\TOb$, when considered as a $2$-category, and $\MonCat_p$ satisfy.  The key result of this section concerns the stability of this class of $2$-categories under limits.  In Section~\ref{sect:core} we use these stability properties to prove our central result, Theorem~\ref{thm:cellular}, which asserts that if $\A$ is a small cellular $2$-category and $\C \in \Lpm$ then the 2-category $Ps(\A,\C)$ of 2-functors and \emph{pseudonatural transformations} belongs to $\Lpm$.

Some readers may prefer to begin reading at Section~\ref{sect:examples} and work backwards.  The section is devoted to applying Theorem~\ref{thm:cellular} to show that lots of $2$-categories of weak categorical structures and their pseudomaps belong to $\Lpm$.  This includes the cases $\MonCat_p$ and $\TOb$ mentioned above as well as a $2$-category of bicategories and doctrines such as $\Lex$ and $\Reg$.  We also show that the $2$-category of strict monoidal categories and strong monoidal functors is not accessible.

In Section~\ref{sect:2monad} we again use Theorem~\ref{thm:cellular}, but now with the intention of establishing general results about the algebras for a $2$-monad.  Section~\ref{sect:isofibrations} examines the accessibility of isofibrations and fibrations in a $2$-category whilst Section~\ref{sect:limittheories} takes a first glance at two-dimensional limit theories, which will be the topic of a followup paper.

\subsection*{Acknowledgements}

The present paper is greatly influenced by the work of Michael Makkai.  I am very grateful to Michael for sharing his ideas on this topic and for the enjoyable discussions we had about it when I visited him in Budapest for a week in 2015.  It is also heavily influenced by Australian 2-category theory, and in particular the connections between model categories and $2$-categories pioneered by Steve Lack, and I am very grateful to Steve for sharing his many ideas about this with me.

\section{Makkai's generalised sketches and structures defined by universal properties}\label{sect:sketches}

In \cite{Makkai1997Sketches1, Makkai1997Sketches2} Makkai showed how to realise categorical structures defined by universal properties as injectives in a presheaf category.  Moreover, he did this in such a way that the natural transformations between the underlying presheaves capture precisely the \emph{pseudomorphisms} of the appropriate kind.  It follows that the categories of such categorical structures and their pseudomaps are, in particular, accessible.  In this short section we review Makkai's illuminating approach, via his \emph{generalised sketches}, and give a detailed treatment of the simplest non-trivial example --- the case of categories admitting a terminal object.


Let $\C$ be a category and $C \in \C$.  Then $C//\C$ is the category whose objects are triples $$(A, A_C, a_c:A_C \to \C(C,A))$$ where $A \in \C$, $A_C \in \Set$ and $a_c:A_C \to \C(C,A)$ is a function.  A morphism $(f,f_C):(A, A_C, a_c) \to (B,B_C,b_c)$ consists of a morphism $f:A \to B \in \C$ and function $f_C:A_C \to B_C$ making the square
\begin{equation*}
\xymatrix{
A_C \ar[d]_{f_{c}} \ar[r]^-{a_{c}} & \C(C,A) \ar[d]^{\C(C,f)} \\
B_C  \ar[r]_-{b_{c}} & \C(C,B)}
\end{equation*}

A \emph{finite sketch category} is one obtained by applying the above construction finitely many times beginning with $\C = \Set$, and where at each stage the object $C \in \C$ is \emph{finitely presentable.}  

A pair $(\C,J)$ where $\C$ is a finite sketch category and $J$ a set of maps between finitely presentable objects in $\C$ is called a \emph{finite doctrine specification}.  The \emph{doctrine} is the full subcategory $Inj(J) \hookrightarrow \C$ of $J$-injectives.

The following proposition describes a few categorical properties shared by each finitary doctrine.

\begin{Prop}\label{prop:obvious}
Each finitary doctrine $Inj(J)$ is accessible with products and filtered colimits, and finite products commute with filtered colimits.
\end{Prop}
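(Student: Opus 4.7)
The strategy is to establish the three properties first for each finite sketch category $\C$, and then to transfer them along the injectivity inclusion $Inj(J) \hookrightarrow \C$. The core claim is that every finite sketch category $\C$ is locally finitely presentable and that finite products commute with filtered colimits in $\C$; this is proved by induction on the number of applications of the $C//(-)$ construction.

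The base case $\C = \Set$ is standard. For the inductive step, suppose $\C$ is locally finitely presentable and that $C \in \C$ is finitely presentable. I would identify $C//\C$ as the category of algebras for the monad $T$ on $\C \times \Set$ defined by $T(X,S) = (X + S \cdot C,\, S)$, with $S \cdot C = \coprod_{S} C$. The unit is the coproduct inclusion paired with the identity, and the forgetful functor $U \colon C//\C \to \C \times \Set$, $(A,A_C,a_c) \mapsto (A,A_C)$, is left adjoint to the free functor sending $(X,S)$ to the obvious triple $(X + S \cdot C,\, S,\, \text{coproduct injections})$. Because $(-) \cdot C$ is left adjoint to $\C(C,-)$ and $C$ is finitely presentable, $T$ preserves filtered colimits, so $U$ is monadic and preserves and reflects both filtered colimits and small limits. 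Since $\C \times \Set$ is locally finitely presentable and $T$ is finitary, it follows that $C//\C$ is locally finitely presentable. Finite products commute with filtered colimits in $\Set$ and (by hypothesis) in $\C$, hence in $\C \times \Set$; and since $U$ preserves and reflects finite products and filtered colimits and reflects isomorphisms, the commutation lifts to $C//\C$.

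For the transfer to $Inj(J)$, I would invoke the standard result on injectivity classes in locally presentable categories: when $J$ is a set of morphisms between finitely presentable objects in a locally presentable $\C$, the full subcategory $Inj(J) \hookrightarrow \C$ is accessible. Moreover $Inj(J)$ is closed under products and filtered colimits in $\C$. Closure under products is immediate since an extension problem for a map $j \colon A \to B$ in $J$ against $\prod_i X_i$ splits into componentwise problems and recombines. Closure under filtered colimits uses that, as the domain $A$ and codomain $B$ of any $j \in J$ are finitely presentable, any map $A \to \mathrm{colim}_i\, X_i$ factors through some $X_i$ where it extends to $B$, and the extension composes with the colimit injection. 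Consequently products and filtered colimits in $Inj(J)$ are computed as in $\C$, and since finite products commute with filtered colimits in $\C$, they do so in $Inj(J)$.

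The main obstacle I anticipate is the inductive step: correctly identifying $C//\C$ as the Eilenberg--Moore category of a finitary monad on $\C \times \Set$, and checking that the forgetful functor preserves and reflects enough structure. Once that is in hand, local finite presentability of $\C$ follows by standard monad-theoretic arguments, and the remaining claims about $Inj(J)$ follow from routine injectivity-class closure properties.
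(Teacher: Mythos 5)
Your proof is correct and reaches the stated conclusion, but it takes a genuinely different route to the key input about the ambient category. The paper simply cites Makkai's result that every finite sketch category is a presheaf category, hence locally finitely presentable with finite products commuting with filtered colimits; the rest is Theorem 4.8 of Ad\'{a}mek--Rosick\'{y} together with the routine closure of $Inj(J)$ under products and filtered colimits, exactly as in your second half. You instead establish local finite presentability of $\C$ by induction, identifying $C//\C$ as the Eilenberg--Moore category of the finitary (indeed cocontinuous) monad $T(X,S)=(X+S\cdot C,\,S)$ on $\C\times\Set$ and lifting the exactness property along the monadic forgetful functor. The identification is right --- an algebra is precisely a pair $(X,S)$ equipped with a map $S\cdot C\to X$, equivalently a function $S\to\C(C,X)$ --- and the lifting arguments go through, so your version is self-contained where the paper's leans on a citation; the trade-off is that you only recover that $\C$ is locally finitely presentable rather than the sharper fact that it is a presheaf category (not needed for this proposition). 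Two small slips worth fixing: the adjunction is stated backwards --- the free functor $(X,S)\mapsto(X+S\cdot C,\,S,\,\iota)$ is \emph{left} adjoint to the forgetful $U$, which is what your monadicity argument requires ($U$ does also happen to be a left adjoint, but its right adjoint is the cofree functor $(X,S)\mapsto(X,\,S\times\C(C,X),\,\pi_2)$, not the free one); and finite presentability of $C$ is not actually needed for $T$ to be finitary, since $(-)\cdot C$ is itself a left adjoint and preserves all colimits.
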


\begin{proof}
In fact, by \cite{Makkai1997Sketches1} each finite sketch category $\C$ is a presheaf category and so locally finitely presentable.  By Theorem 4.8 of \cite{Adamek1994Locally}, the full subcategory $Inj(J) \hookrightarrow \C$ is accessible and accessibly embedded.  It is also closed under products and, since the morphisms in $J$ have finitely presentable domains and codomains, filtered colimits.  Since finite products commute with filtered colimits in $\C$ it follows that they do so in $Inj(J)$ too.
\end{proof}

\begin{Example}
Let $2 \in \Set$.  Then $2//\Set = \Gph$, the category of directed graphs.
\end{Example}

If $\X$ is a finite set of finitely presentable objects in $\C$ it is convenient to also write $\X//\C$ for the category whose objects $$(A, A_X, a_X:A_X \to \C(C,X))$$ have $A \in \C$ as before,  plus a set $A_X$ and function $a_X:A_X \to \C(C,X)$ for each $X \in \X$ and with morphisms defined in the obvious way.  This is notationally convenient, but offers no extra generality since $\X//\C$ can be obtained by applying the previous construction once for each element of $\X$.

\begin{Example}
We begin by describing the doctrine for categories.  
To this end, consider the directed graphs $T$ and $I$ below.
\begin{equation*}
\begin{xy}
(-13,0)*+{T=}="as";(0,-5)*+{0}="00";(10,5)*+{1}="11";(20,-5)*+{2}="20";
{\ar^{} "00"; "11"};{\ar^{} "11"; "20"};{\ar^{} "00"; "20"};
\ar@{.}(-5,-10);(-5,10);
\ar@{.}(25,-10);(25,10);
\ar@{.}(-5,-10);(25,-10);
\ar@{.}(-5,10);(25,10);
\end{xy}
\hspace{1cm}
\begin{xy}
(-8,0)*+{I=}="as";(0,0)*+{0}="00";(10,0)*+{0}="11";
{\ar^{} "00"; "11"};
\ar@{.}(-3,-5);(13,-5);
\ar@{.}(-3,5);(13,5);
\ar@{.}(-3,5);(-3,-5);
\ar@{.}(13,5);(13,-5);
\end{xy}
\end{equation*}
The objects of $\cSk = \{T,I\} // \Gph$ are what Makkai calls \emph{category sketches} --- such consists of a graph $A$ together with functions $a_T:A(T) \to \Gph(T,A)$ and $a_I:A(I) \to \Gph(I,A)$, which we may think of as indexed sets of triangles and endomorphisms in $A$.  

There is an injective on objects fully faithful inclusion $\Cat \hookrightarrow \cSk$ which sends a category $A$ to its underlying graph, equipped with the subsets $A(T)$ and $I(T)$ of commutative triangles and identity morphisms in $A$ --- it is full since a functor is a graph map taking commutative triangles to commutative triangles and identities to identities.  We would like to identify its essential image as $Inj(J)$ for a set of maps so that $\Cat \simeq Inj(J)$ is the associated doctrine.  This is straightforward.  Firstly, we specify that $a_T:A(T) \to \Gph(T,A)$ and $a_I:A(I) \to \Gph(I,A)$ are injective.  This is done by considering the two morphisms in $\cSk$ depicted below, in which bold faced letters denote the elements of $A(T)$ and $A(I)$.

\begin{equation*}
\begin{xy}
(0,-5)*+{0}="00";(10,5)*+{1}="11";(20,-5)*+{2}="20";
{\ar^{} "00"; "11"};{\ar^{} "11"; "20"};{\ar^{} "00"; "20"};
(10,-2)*+{\mathbf{u,v}};
(40,-5)*+{0}="00";(50,5)*+{1}="11";(60,-5)*+{2}="20";
{\ar^{} "00"; "11"};{\ar^{} "11"; "20"};{\ar^{} "00"; "20"};
(50,-1)*+{\mathbf{w}};
(25,0)*+{}="h";(35,0)*+{}="t";{\ar@{-->}^{} "h"; "t"};
\ar@{.}(-5,-10);(-5,10);
\ar@{.}(25,-10);(25,10);
\ar@{.}(-5,-10);(25,-10);
\ar@{.}(-5,10);(25,10);
\ar@{.}(35,-10);(35,10);
\ar@{.}(65,-10);(65,10);
\ar@{.}(35,-10);(65,-10);
\ar@{.}(35,10);(65,10);
\end{xy}
\end{equation*}

\begin{equation*}
\begin{xy}
(0,0)*+{0}="00";(20,0)*+{0}="11";
{\ar^{\mathbf{u,v}} "00"; "11"};
(40,0)*+{0}="00";(60,0)*+{0}="11";
{\ar^{\mathbf{w}} "00"; "11"};
(25,0)*+{}="h";(35,0)*+{}="t";{\ar@{-->}^{} "h"; "t"};
\ar@{.}(-5,-5);(-5,5);
\ar@{.}(25,-5);(25,5);
\ar@{.}(-5,-5);(25,-5);
\ar@{.}(-5,5);(25,5);
\ar@{.}(35,-5);(35,5);
\ar@{.}(65,-5);(65,5);
\ar@{.}(35,-5);(65,-5);
\ar@{.}(35,5);(65,5);
\end{xy}
\end{equation*}

Henceforth we abuse notation and use $=$ to denote an element of $A(T)$ and $\mathbf{i}$ to denote an element of $A(I)$.  For existence and uniqueness of composites we add

\begin{equation*}
\begin{xy}
(0,-5)*+{0}="00";(10,5)*+{1}="11";(20,-5)*+{2}="20";
{\ar^{} "00"; "11"};{\ar^{} "11"; "20"};
(10,-2)*+{};
(40,-5)*+{0}="00";(50,5)*+{1}="11";(60,-5)*+{2}="20";
{\ar^{} "00"; "11"};{\ar^{} "11"; "20"};{\ar^{} "00"; "20"};
(50,-1)*+{=};
(25,0)*+{}="h";(35,0)*+{}="t";{\ar@{-->}^{} "h"; "t"};
\ar@{.}(-5,-10);(-5,10);
\ar@{.}(25,-10);(25,10);
\ar@{.}(-5,-10);(25,-10);
\ar@{.}(-5,10);(25,10);
\ar@{.}(35,-10);(35,10);
\ar@{.}(65,-10);(65,10);
\ar@{.}(35,-10);(65,-10);
\ar@{.}(35,10);(65,10);
\end{xy}
\end{equation*}
and
\begin{equation*}
\begin{xy}
(0,-5)*+{0}="00";(10,5)*+{1}="11";(20,-5)*+{2}="20";
{\ar^{f} "00"; "11"};{\ar^{g} "11"; "20"};{\ar@/^1ex/^{h_{1}} "00"; "20"};{\ar@/_1ex/_{h_{2}} "00"; "20"};
(10,-2)*+{};
(40,-5)*+{0}="00";(50,5)*+{1}="11";(60,-5)*+{2}="20";
{\ar^{f} "00"; "11"};{\ar^{g} "11"; "20"};{\ar_{h} "00"; "20"};
(50,-1)*+{=};
(25,0)*+{}="h";(35,0)*+{}="t";{\ar@{-->}^{} "h"; "t"};
\ar@{.}(-5,-13);(-5,10);
\ar@{.}(25,-13);(25,10);
\ar@{.}(-5,-13);(25,-13);
\ar@{.}(-5,10);(25,10);
\ar@{.}(-5,-20);(25,-20);
\ar@{.}(-5,-20);(-5,-25);
(3,-17)*+{gf=h_1,};(18,-17)*+{gf=h_2};
\ar@{.}(25,-13);(25,-20);
\ar@{.}(35,-10);(35,10);
\ar@{.}(65,-10);(65,10);
\ar@{.}(35,-10);(65,-10);
\ar@{.}(35,10);(65,10);
\end{xy}
\end{equation*}
Existence and uniqueness of identities is similar.  The three category axioms are each encoded by a single morphism in $\cSk$.  For instance, the left unit law is encoded by

\begin{equation*}
\begin{xy}
(0,-5)*+{0}="00";(10,5)*+{0}="11";(20,-5)*+{1}="20";
{\ar^{\mathbf{i}} "00"; "11"};{\ar^{f} "11"; "20"};{\ar_{f} "00"; "20"};
(40,-5)*+{0}="00";(50,5)*+{0}="11";(60,-5)*+{1}="20";
{\ar^{\mathbf{i}} "00"; "11"};{\ar^{f} "11"; "20"};{\ar_{f} "00"; "20"};
(50,-1)*+{=};
(25,0)*+{}="h";(35,0)*+{}="t";{\ar@{-->}^{} "h"; "t"};
\ar@{.}(-5,-10);(-5,10);
\ar@{.}(25,-10);(25,10);
\ar@{.}(-5,-10);(25,-10);
\ar@{.}(-5,10);(25,10);
\ar@{.}(35,-10);(35,10);
\ar@{.}(65,-10);(65,10);
\ar@{.}(35,-10);(65,-10);
\ar@{.}(35,10);(65,10);
\end{xy}
\end{equation*}
and we leave the other cases to the reader.  In particular, for $J$ the collection of these 11 morphisms, we have $\Cat \simeq Inj(J)$.
\end{Example}

\begin{Example}
As shown in \cite{Makkai1997Sketches2}, the category of small categories with finite (co)limits of a given type and \emph{functors preserving them up to isomorphism} can be expressed as finitary doctrines.  Likewise the categories \textbf{Reg},~\textbf{Ex},~\textbf{Coh},~\textbf{Pretop} and \textbf{CCC} of small regular categories, Barr-exact categories, coherent categories, pretopoi, cartesian closed categories, and many more, can be captured as finitary doctrines.  In particular, all of these form accessible categories, with the properties described in Proposition~\ref{prop:obvious}.  The simplest non-trivial example is the category $\TOb$ of small categories admitting a terminal object and terminal object preserving functors.  This example contains the key idea required in all of the above cases --- for further details on the other examples, we refer the reader to \cite{Makkai1997Sketches2}.

Let $\tSk = T // \cSk$ where $T = \{\bullet\}$ is the graph with no morphisms, and with $T(C) = T(I) = \varnothing$; then an object of $\tSk$ is a category sketch equipped with an indexed set of objects. There is an injective on objects and fully faithful functor $K:\TOb \to \tSk$ sending $A$ to its underlying category sketch equipped with $A(T)$ the subset of \emph{all terminal objects} in $\A$.  Note that $K$ is full because a functor $F:A \to B$ lies in $\TOb$ precisely when it sends elements of $A(T)$ to elements of $B(T)$.

Now the forgetful functor $U:\tSk \to \cSk$ has a left adjoint $D$, which sends a category sketch $A$ to the terminal object sketch with $A(T) = \varnothing$ --- it follows that $X \in \tSk$ lies in $Inj(DJ)$ just when $UX$ is a category --- that is, an object of $Inj(J)$. 

Next we wish to force $a_T:A(T) \to \tSk(T,A)$ to be injective, so that $A(T)$ specifies a subset of objects of $A$.  This is much as in the preceding example: let $T_n \in \tSk$ denote the category sketch $T$ equipped with $T_n(T) = \{1,2,\ldots,n\}$.  Then $A$ is injective to the unique map $!_{2,1}:T_2 \to T_1$ just when $a_T$ is an injective function, as required.

It remains to axiomatise the following: (1) $A(T)$ is non-empty, (2) each element of $A(T)$ is terminal and (3) each terminal object belongs to $A(T)$.  For (1) we consider the inclusion of the empty sketch $\varnothing \to T_1$.  Capturing (2) requires a pair morphisms, depicted below

\begin{equation*}
\begin{xy}
(0,0)*+{0}="00";(15,0)*+{\textcircled{1}}="11";
(45,0)*+{0}="00";(60,0)*+{\textcircled{1}}="11";
{\ar^{} "00"; "11"};
(20,0)*+{}="h";(40,0)*+{}="t";{\ar@{-->}^{} "h"; "t"};
\ar@{.}(-5,-5);(20,-5)
\ar@{.}(-5,5);(20,5)
\ar@{.}(-5,5);(-5,-5)
\ar@{.}(20,5);(20,-5)
\ar@{.}(40,-5);(65,-5)
\ar@{.}(40,5);(65,5)
\ar@{.}(40,5);(40,-5)
\ar@{.}(65,5);(65,-5)
\end{xy}
\end{equation*}
\newline
\begin{equation*}
\begin{xy}
(0,0)*+{0}="00";(15,0)*+{\textcircled{1}}="11";
{\ar@/^1ex/^{} "00"; "11"};
{\ar@/_1ex/^{} "00"; "11"};
(45,0)*+{0}="00";(60,0)*+{\textcircled{1}}="11";
(20,0)*+{}="h";(40,0)*+{}="t";{\ar@{-->}^{} "h"; "t"};
{\ar^{} "00"; "11"};
\ar@{.}(-5,-5);(20,-5)
\ar@{.}(-5,5);(20,5)
\ar@{.}(-5,5);(-5,-5)
\ar@{.}(20,5);(20,-5)
\ar@{.}(40,-5);(65,-5)
\ar@{.}(40,5);(65,5)
\ar@{.}(40,5);(40,-5)
\ar@{.}(65,5);(65,-5)
\end{xy}
\end{equation*}
where we use a circle to indicate that the object belongs to the designated set of ``terminal objects".  Given (1) and (2) we know that a terminal object exists --- it follows that (3) amounts to the repleteness condition, that \emph{any object isomorphic to a ``terminal object" is again ``terminal".}

This is expressed using injectivity with respect to the following map

\begin{equation*}
\begin{xy}
(0,-5)*+{0}="00";(10,5)*+{\textcircled{1}}="11";(20,-5)*+{0}="20";(30,5)*+{\textcircled{1}}="t1";
{\ar^{f} "00"; "11"};{\ar|{g} "11"; "20"};{\ar_{\mathbf i} "00"; "20"};{\ar_{f} "20"; "t1"};{\ar^{\mathbf i} "11"; "t1"};
(50,-5)*+{\textcircled{0}}="00";(60,5)*+{\textcircled{1}}="11";(70,-5)*+{\textcircled{0}}="20";(80,5)*+{\textcircled{1}}="t1";
{\ar^{f} "00"; "11"};{\ar|{g} "11"; "20"};{\ar_{\mathbf i} "00"; "20"};{\ar_{f} "20"; "t1"};{\ar^{\mathbf i} "11"; "t1"};
(10,-2)*+{=};(20,2)*+{=};
(35,0)*+{}="h";(45,0)*+{}="t";{\ar@{-->}^{} "h"; "t"};
(60,-2)*+{=};(70,2)*+{=};
\ar@{.}(-5,-10);(-5,10)
\ar@{.}(35,-10);(35,10)
\ar@{.}(-5,-10);(35,-10)
\ar@{.}(-5,10);(35,10)
\ar@{.}(45,-10);(45,10)
\ar@{.}(85,-10);(85,10)
\ar@{.}(45,10);(85,10)
\ar@{.}(45,-10);(85,-10)
\end{xy}
\end{equation*}
where on the left we have the underlying category sketch of the free isomorphism, with the object $1$ marked as terminal, and on the right the same category sketch but with both objects marked as terminal.  All told, taking the union of $DJ$ and the preceding five morphisms, we obtain a set $J_{T}$ of maps in $\tSk$ such that $\TOb \simeq Inj(J_{T})$, as claimed.\end{Example}

\begin{Remark}
In the above, a doctrine is characterised as 
the full subcategory $Inj(J)$ of $J$-injectives in a presheaf category.  
As well as injectives, one can consider \emph{algebraic injectives} --- for instance, see \cite{Bourke2017Equipping, Nikolaus2011Algebraic} and \cite{Garner2011Understanding}.  These are injective objects \emph{equipped} with a solution to each extension problem, and form the objects of a category $AInj(J)$, whose morphisms preserve the chosen liftings.  

In the above example a $J_{T}$-algebraic injective is a category equipped with a choice of terminal object, whilst a morphism of algebraic injectives preserves the chosen terminal object strictly.   More generally, when categories with a class of limits and colimits satisfying some exactness properties are axiomatised using finitary doctrines \cite{Makkai1997Sketches2}, the algebraic injectives capture the same structures but equipped with a choice of the given limits and colimits, and the morphisms thereof preserve these limits and colimits strictly.  It follows that free categories \emph{admitting} such structure can be constructed using \emph{Quillen's small object argument} -- for instance, see \cite{Hovey1999Model} -- whilst free categories \emph{equipped} with such structure can be constructed using the \emph{algebraic small object argument} of \cite{Garner2011Understanding}.
\end{Remark}





\section{Background and terminology}\label{sect:background}
In the present section we fix terminology and draw together the background material, primarily on $2$-category theory, required in the remainder of the paper.
\subsection{Basic terminology}
\begin{itemize}
%
\item Let $\C$ be a $2$-category.  We write $\C_0$ for is its underlying category.
\item Let $\A$ be a small $2$-category and $\C$ a locally small $2$-category.  Then $[\A,\C]$ is the $2$-category of $2$-functors, $2$-natural transformations and modifications whilst $Ps(\A,\C)$ is the $2$-category of $2$-functors, pseudonatural transformations and modifications.
\item $\Cat$ is the $2$-category of small categories.  $\CAT$ is the large $2$-category of locally small categories.
\item $2$-categories can themselves be viewed as the objects of various kinds of higher category --- $2$-categories, $3$-categories and Gray-categories.  In the present paper, we will largely use just $2$-categorical techniques to study them.  
We write $\twocat$ for the $2$-category of small $2$-categories, $2$-functors and $2$-natural transformations, and $\Twocat$ for the $2$-category of locally small $2$-categories, again with $2$-functors and $2$-natural transformations.\end{itemize}
\subsection{Classes of morphism in a $2$-category}
A morphism $f:A \to B$ in a $2$-category $\C$ is:
\begin{itemize}
\item an \emph{equivalence} if there exists a morphism $g:B \to A \in \C$ and invertible $2$-cells $gf \cong 1_A$ and $fg \cong 1_B$; it is furthermore a \emph{retract (or surjective) equivalence} if the equivalence inverse $g$ can be chosen so that $fg = 1_B$;  
\item \emph{faithful/fully faithful/conservative} if the functor $\C(X,f):\C(X,A) \to \C(X,B)$ has the corresponding property for all $X \in \C$;
\item  an \emph{isofibration} if given $g:X \to A$, $h:X \to B$ and $\alpha:f \circ g \cong h$ there exists $\alpha^{\prime}:g \cong h^{\prime}$ such that $f \circ \alpha^{\prime} = \alpha$.  A functor $F:\A \to \B$ in $\Cat$ is then an isofibration if given $A \in \A$ and an isomorphism $\alpha:B \cong FA$ there exists a isomorphism $\alpha^{\prime}:B^{\prime} \cong A$ such that $F\alpha^{\prime} = \alpha$.  
It is a standard and routine fact that a morphism in a $2$-category is a surjective equivalence if and only if it is both an equivalence and isofibration.  
\item an \emph{isocofibration} if it is an isofibration in $\C^{op}$;
\item a \emph{discrete isofibration} if it is an isofibration and the lifted isomorphisms $\alpha^{\prime}:g \cong h^{\prime}$ are unique.  It is a standard fact that a morphism in a $2$-category is an isomorphism if and only if it is both an equivalence and discrete isofibration.  
\end{itemize}
Let us make a couple of further remarks on classes of morphisms.
\begin{itemize}
\item An \emph{isofibration of $2$-categories} -- that is, an isofibration in $\Twocat$ -- just amounts to a $2$-functor whose underlying functor is an isofibration in $\CAT$.
 \item A \emph{discrete isofibration} is an isofibration in which the lifted isomorphisms are required to be unique.  It is a routine fact that a morphism in a $2$-category is invertible if and only if it is both an equivalence and a discrete isofibration.
\item We note that there are various notions of equivalence of $2$-categories --- namely, equivalence in the $2$-category $\Twocat$ and the more general notion of \emph{biequivalence.}  In the present paper, we will only be interested in equivalences in $\Twocat$, which we will refer to as \emph{$2$-equivalences}.  The reason is that we will be dealing with genuine limits, filtered colimits and accessibility and these are concepts that are invariant under $2$-equivalence but not biequivalence.
\end{itemize}

The following is a standard $2$-categorical exercise but since we have been unable to find a reference for it in full generality, we give the construction.

\begin{Lemma}\label{lem:conservative}
For $\A$ a small 2-category the equivalences, surjective equivalences and isomorphisms in $Ps(\A,\C)$ are exactly the pointwise equivalences, pointwise surjective equivalences and pointwise isomorphisms in $\C$.
\end{Lemma}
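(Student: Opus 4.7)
The plan is to observe first that the three ``pseudo-level implies pointwise'' directions are trivial: for each $A \in \A$, evaluation at $A$ defines a $2$-functor $\mathrm{ev}_A \colon Ps(\A,\C) \to \C$, and $2$-functors preserve equivalences, surjective equivalences and isomorphisms. So the substance of the proof lies in the three converse directions, and the key step in each is to equip a pointwise inverse with compatible pseudonatural structure.

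For the equivalence case, suppose $\alpha \colon F \to G$ is a pointwise equivalence. For each $A \in \A$ choose an equivalence inverse $\beta_A \colon GA \to FA$ together with invertible $2$-cells $\eta_A \colon 1_{FA} \cong \beta_A \alpha_A$ and $\epsilon_A \colon \alpha_A \beta_A \cong 1_{GA}$; by a standard triangle-identity adjustment we may even assume $(\beta_A, \alpha_A, \eta_A, \epsilon_A)$ is an adjoint equivalence, which streamlines coherence. For each $f \colon A \to B$ in $\A$, define the pseudonaturality constraint $\beta_f \colon Ff \cdot \beta_A \cong \beta_B \cdot Gf$ as the mate of $\alpha_f^{-1}$ under the chosen equivalence data, i.e.\ by pasting $\alpha_f^{-1}$ with appropriate whiskerings of $\eta_B$ and $\epsilon_A$. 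The standard mate calculus shows that the family $\beta_f$ satisfies the two pseudonaturality coherence axioms, and that $(\eta_A)_A$ and $(\epsilon_A)_A$ assemble into invertible modifications $\eta \colon 1_F \Rightarrow \beta \alpha$ and $\epsilon \colon \alpha \beta \Rightarrow 1_G$; the modification axioms reduce, after a pasting diagram chase, to the very definition of $\beta_f$ as a mate of $\alpha_f^{-1}$.

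For the surjective equivalence case, choose $\beta_A$ with $\alpha_A \beta_A = 1_{GA}$ and $\epsilon_A = 1_{1_{GA}}$. With these choices the mate formula for $\beta_f$ makes the composite pseudonatural transformation $\alpha \circ \beta$ have identity components and identity constraint cells, whence $\alpha \circ \beta = 1_G$ on the nose, while $\eta$ is still merely an invertible modification, as required. The isomorphism case is simpler still: if each $\alpha_A$ is invertible, set $\beta_A = \alpha_A^{-1}$ and $\eta_A = \epsilon_A = 1$, so that the mate of $\alpha_f^{-1}$ becomes literally $\alpha_B^{-1} \cdot \alpha_f^{-1} \cdot \alpha_A$; both composites $\alpha \circ \beta$ and $\beta \circ \alpha$ then have identity constraints and $\beta$ is a strict two-sided inverse in the underlying category of $Ps(\A,\C)$.

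The main obstacle is entirely notational: all three cases hinge on verifying that the mate construction actually defines a pseudonatural transformation and that the chosen pointwise $2$-cells cohere into modifications. Each check is a routine pasting diagram chase using the two pseudonaturality axioms for $\alpha$ together with invertibility of $\eta, \epsilon$, that is, the standard mate calculus for adjoint equivalences in a $2$-category applied fibrewise over $\A$; no further input is needed.
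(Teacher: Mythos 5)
Your proposal is correct and follows essentially the same route as the paper: upgrade the pointwise inverses to adjoint equivalences, define the pseudonaturality constraints of the inverse as mates of the constraints of $\alpha$, and observe that the unit and counit assemble into invertible modifications, with the surjective-equivalence and isomorphism cases obtained by making the counit (resp.\ both unit and counit) an identity. The only cosmetic difference is that the paper closes the strict cases by invoking that the evaluation $2$-functors jointly reflect identity $2$-cells, whereas you compute directly that the constraints of $\alpha\circ\beta$ become identities; both are fine.
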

\begin{proof}
Like any $2$-functor the evaluation $2$-functors $ev_{X}:Ps(\A,\C) \to C$ preserve equivalences, surjective equivalences and isomorphisms --- thus one direction is clear.  For the other, suppose that $f:A \to B \in Ps(\A,\C)$ is a pointwise equivalence.  Then each component $f_X:AX \to BX$ can be made into an adjoint equivalence $(\epsilon_X,f_X \dashv u_X,\eta_X)$.  We extend the components $u_X:BX \to AX$ to a pseudonatural transformation as follows.  At $r:X \to Y$ we have the pseudonaturality component of $f$ as below left.  Taking its \emph{mate} \cite{Kelly1972Review} through the adjoint equivalences $f_X \dashv u_X$ and $f_Y \dashv u_Y$ yields the composite invertible $2$-cell below centre.
\begin{equation*}
\xy
(00,-5)*+{AX}="00";(20,-5)*+{BX}="10";
(00,-25)*+{AY}="01";(20,-25)*+{BY}="11";
{\ar^{f_X} "00"; "10"}; 
{\ar_{Ar} "00"; "01"}; 
{\ar_{Br} "10"; "11"}; 
{\ar^{f_Y} "01"; "11"}; 
{\ar@{=>}^{f_r}(6,-15)*+{};(14,-15)*+{}};
\endxy
\hspace{1cm}
\xy
(00,0)*+{BX}="00";(30,0)*+{AX}="10"; (00,-30)*+{BY}="02";(30,-30)*+{AY}="22";
(00,-15)*+{BX}="01";(30,-15)*+{AX}="11";
{\ar^{Ar} "10"; "11"}; 
{\ar_{u_Y} "02"; "22"}; 
{\ar_{Br} "01"; "02"}; 
{\ar^{u_X} "00"; "10"}; 
{\ar_{1} "00"; "01"}; 
{\ar^{f_X} "10"; "01"}; 
{\ar_{f_Y} "11"; "02"}; 
{\ar^{1} "11"; "22"}; 
{\ar@{=>}_{f_r}(19,-16)*+{};(12,-16)*+{}};
{\ar@{=>}_{\epsilon_X}(14,-6)*+{};(7,-6)*+{}};
{\ar@{=>}_{\eta_Y}(24,-25)*+{};(17,-25)*+{}};
\endxy
\hspace{1cm}
\xy
(00,0)*+{BX}="00";(30,0)*+{AX}="10"; (00,-30)*+{BY}="02";(30,-30)*+{AY}="22";
(00,-15)*+{BX}="01";(30,-15)*+{AX}="11";
{\ar^{Ar} "10"; "11"}; 
{\ar_{u_Y} "02"; "22"}; 
{\ar_{Br} "01"; "02"}; 
{\ar^{u_X} "00"; "10"}; 
{\ar_{1} "00"; "01"}; 
{\ar^{f_X} "10"; "01"}; 
{\ar_{f_Y} "11"; "02"}; 
{\ar^{1} "11"; "22"}; 
{\ar@{=>}^{(f_r)^{-1}}(12,-17)*+{};(19,-17)*+{}};
{\ar@{=>}^{(\epsilon_X)^{-1}}(7,-6)*+{};(14,-6)*+{}};
{\ar@{=>}_{(\eta_Y)^{-1}}(17,-24)*+{};(24,-24)*+{}};
\endxy
\end{equation*}
It is the inverse of this composite $2$-cell, depicted above right, that we actually take as the value of $u_r$.  At $s:Y \to Z$ the compatibility $(As \circ u_r).(u_s \circ Br) = u_{s \circ r}$ for a pseudonatural transformation follows from one of the triangle equations for the adjunction $f_Y \dashv u_Y$ together with the corresponding pseudonaturality condition for $f$.  The remaining conditions for a pseudonatural transformation are straightforward to verify.  The families of invertible $2$-cells $(\epsilon_X)_{X \in A}$ and $(\eta_X)_{X \in A}$ then lift to invertible modifications $\epsilon$ and $\eta$.  Since the evaluation $2$-functors $ev_X:Ps(\A,\C) \to \C$ for $X \in \C$ are jointly faithful on $2$-cells the triangle equations in $Ps(\A,\C)$ follow directly from those in $\C$.  Thus each pointwise equivalence is an equivalence in $Ps(\A,\C)$.  

In the case that $f$ is a pointwise surjective equivalence or isomorphism then, since the $ev_X$ jointly reflect identity $2$-cells, the same construction shows it to be a surjective equivalence or isomorphism in $Ps(\A,\C)$.
\end{proof}

\subsection{Cellularity, cell inclusions and cellular $2$-categories}
Let $J$ be a set of morphisms in a cocomplete category $\C$.  A morphism $f:X \to Y \in \C$ is said to be $J$-cellular if 
\begin{itemize}
\item it is a transfinite composite of pushouts of coproducts of morphisms in $J$.
\end{itemize}
If the domains of the morphisms in $J$ are finitely presentable, then by Proposition A.6 of \cite{Malts2010} it suffices to consider \emph{countable} composition.  This is the approach we will take in the present paper.  

Let us write $Cell(J) \hookrightarrow \C^{\atwo}$ for the subcollection of $J$-cellular morphisms.  This is closed under coproducts, pushout and transfinite cocomposition.  An object $X \in \C$ is said to be \emph{$J$-cellular} if the unique map $\varnothing \to X$ is a $J$-cellular morphism.  A $J$-cofibration is a morphism belonging to the retract closure of the $J$-cellular morphisms in $\C^{\atwo}$, whilst an object $X \in \C$ is $J$-cofibrant if $\varnothing \to X$ is a $J$-cofibration.  Under mild assumptions on $\C$ --- for instance if $\C$ is locally presentable --- the set $J$ generates a weak factorisation system $(Cof(J),RLP(J))$ on $\C$ whose left class consists of the $J$-cofibrations and whose right class consists of the morphisms with the right lifting property with respect to $J$.  See Proposition 2.1.14 of \cite{Hovey1999Model} for a proof.  The right class $RLP(J)$ is closed under products, pullbacks and transfinite cocomposition.

\subsubsection{Cell inclusions for categories}\label{sect:catCof}
There is a natural model structure on $\Cat_0$ (or indeed any sufficiently bicomplete 2-category \cite{Lack2007Homotopy-theoretic}) which captures the equivalences, isofibrations and surjective equivalences described above.  In $\Cat_0$, the set $I$ of generating cofibrations is given by the following maps: firstly, we have the inclusion
\begin{equation}
J_0:P_{-1} \to D_0:
\begin{xy}
(25,0)*+{}="30";(35,0)*+{}="40";(50,0)*+{\bullet}="50";
{\ar@{|->}^{} "30"; "40"};
\ar@{.}(-3,-5);(23,-5)
\ar@{.}(-3,5);(23,5)
\ar@{.}(-3,5);(-3,-5)
\ar@{.}(23,5);(23,-5)
\ar@{.}(37,-5);(63,-5)
\ar@{.}(37,5);(63,5)
\ar@{.}(37,5);(37,-5)
\ar@{.}(63,5);(63,-5)
\end{xy}
\end{equation}
 of the empty category to the terminal such; secondly the inclusion
\begin{equation}
J_1:P_0 \to D_1:
\begin{xy}
(0,0)*+{\bullet}="00";(20,0)*+{\bullet}="10";
(25,0)*+{}="30";(35,0)*+{}="40";(40,0)*+{\bullet}="50";(60,0)*+{\bullet}="60";
{\ar@{|->}^{} "30"; "40"};
{\ar^{} "50"; "60"};
\ar@{.}(-3,-5);(23,-5)
\ar@{.}(-3,5);(23,5)
\ar@{.}(-3,5);(-3,-5)
\ar@{.}(23,5);(23,-5)
\ar@{.}(37,-5);(63,-5)
\ar@{.}(37,5);(63,5)
\ar@{.}(37,5);(37,-5)
\ar@{.}(63,5);(63,-5)
\end{xy}
\end{equation}
of the boundary of the generic arrow $D_1$.  (We often denote $D_1$, which will appear regularly, by $\atwo$.)

Thirdly, there is the functor
\begin{equation}
\nabla:P_1 \to D_1:
\begin{xy}
(0,0)*+{\bullet}="00";(20,0)*+{\bullet}="10";
{\ar@/^1pc/^{} "00"; "10"};{\ar@/_1pc/^{} "00"; "10"};
(25,0)*+{}="30";(35,0)*+{}="40";(40,0)*+{\bullet}="50";(60,0)*+{\bullet}="60";
{\ar@{|->}^{} "30"; "40"};
{\ar^{} "50"; "60"};
\ar@{.}(-3,-5);(23,-5)
\ar@{.}(-3,5);(23,5)
\ar@{.}(-3,5);(-3,-5)
\ar@{.}(23,5);(23,-5)
\ar@{.}(37,-5);(63,-5)
\ar@{.}(37,5);(63,5)
\ar@{.}(37,5);(37,-5)
\ar@{.}(63,5);(63,-5)
\end{xy}
\end{equation}
identifying a parallel pair of morphisms.  

We note that each object is cofibrant whilst the morphisms with the right lifting property are the surjective equivalences.

Let us also mention the single generating trivial cofibration for the model structure on $\Cat_0$.  This is the inclusion
\begin{equation}
I_1:D_0 \to I_1:
\begin{xy}
(10,0)*+{\bullet}="00";
(25,0)*+{}="30";(35,0)*+{}="40";(40,0)*+{\bullet}="50";(60,0)*+{\bullet}="60";
{\ar@{|->}^{} "30"; "40"};
{\ar^{\cong} "50"; "60"};
\ar@{.}(-3,-5);(23,-5)
\ar@{.}(-3,5);(23,5)
\ar@{.}(-3,5);(-3,-5)
\ar@{.}(23,5);(23,-5)
\ar@{.}(37,-5);(63,-5)
\ar@{.}(37,5);(63,5)
\ar@{.}(37,5);(37,-5)
\ar@{.}(63,5);(63,-5)
\end{xy}
\end{equation}
selecting (either) endpoint of the free isomorphism; the functors with the right lifting property to $I_1$ are the isofibrations. 

\subsubsection{Cell inclusions for $2$-categories and cellular $2$-categories}\label{sect:2catcof}
The generating cofibrations $J$ for the natural model structure on $\twocat_{0}$ \cite{Lack2002A-quillen} will play a central role in what follows.  To begin with, we have the inclusions $J_0$ and $J_1$ as above where source and target are viewed as $2$-categories in which all $2$-cells are identities.  Thirdly, we have the inclusion 
\begin{equation}
J_2:P_1 \to D_2:
\begin{xy}
(0,0)*+{\bullet}="00";(20,0)*+{\bullet}="10";
{\ar@/^1pc/^{} "00"; "10"};{\ar@/_1pc/^{} "00"; "10"};
(25,0)*+{}="30";(35,0)*+{}="40";(40,0)*+{\bullet}="50";(60,0)*+{\bullet}="60";
{\ar@{|->}^{} "30"; "40"};
{\ar@/^1pc/^{} "50"; "60"};{\ar@/_1pc/^{} "50"; "60"};
{\ar@{=>}^{}(50,3)*+{};(50,-3)*+{}};
\ar@{.}(-3,-5);(23,-5)
\ar@{.}(-3,5);(23,5)
\ar@{.}(-3,5);(-3,-5)
\ar@{.}(23,5);(23,-5)
\ar@{.}(37,-5);(63,-5)
\ar@{.}(37,5);(63,5)
\ar@{.}(37,5);(37,-5)
\ar@{.}(63,5);(63,-5)
\end{xy}
\end{equation}
of the parallel pair of $1$-cells to the free $2$-cell $D_2$ and, finally, the $2$-functor 
\begin{equation}
J_3:P_2 \to D_2:
\begin{xy}
(0,0)*+{\bullet}="00";(20,0)*+{\bullet}="10";
{\ar@/^1pc/^{} "00"; "10"};{\ar@/_1pc/^{} "00"; "10"};
{\ar@{=>}^{}(8,3)*+{};(8,-3)*+{}};
{\ar@{=>}^{}(12,3)*+{};(12,-3)*+{}};
(25,0)*+{}="30";(35,0)*+{}="40";(40,0)*+{\bullet}="50";(60,0)*+{\bullet}="60";
{\ar@{|->}^{} "30"; "40"};
{\ar@/^1pc/^{} "50"; "60"};{\ar@/_1pc/^{} "50"; "60"};
{\ar@{=>}^{}(50,3)*+{};(50,-3)*+{}};
\ar@{.}(-3,-5);(23,-5)
\ar@{.}(-3,5);(23,5)
\ar@{.}(-3,5);(-3,-5)
\ar@{.}(23,5);(23,-5)
\ar@{.}(37,-5);(63,-5)
\ar@{.}(37,5);(63,5)
\ar@{.}(37,5);(37,-5)
\ar@{.}(63,5);(63,-5)
\end{xy}
\end{equation}
identifying a pair of parallel $2$-cells.  Making the identification $D_3=D_2$ then $J=\{J_i:P_{i-1} \to D_{i}:i = 0,1,2,3\}$ is the set of generating cofibrations.

\subsubsection{Cellular $2$-categories}\label{sect:cellular2}

The $J$-cellular and $J$-cofibrant $2$-categories coincide and we will refer to them as \emph{cellular 2-categories}.  By \cite{Lack2002A-quillen} they can be characterised as those $2$-categories whose underlying category is \emph{free on a graph}.  Using this result, or otherwise, it is not hard to see that each cellular $2$-category can be obtained as a composite
$$\varnothing = \A_0 \to \A_1 \to \A_2  \to \A_3  \to \A_{4}=\A$$
in which each $\A_{i} \to \A_{i+1}$ is a pushout of a copower of  $J_i$'s --- concretely, this says that a 2-category is cellular just when it can be obtained by starting with a set of objects, freely adding some 1-cells between these, then freely adding some 2-cells between the generated $1$-cells and finally adding equations between the generated $2$-cells.


\begin{Examples}
Below are a few key examples of cellular $2$-categories.
\begin{enumerate}
\item The maps $J_i:P_{i-1} \to D_{i}$ for $i \in \{0,1,2,3\}$ have cellular source and target.
\item The free adjunction $\Adj$ of \cite{Schanuel1986Thefree} is the $2$-category generated by objects $0$ and $1$, morphisms $u:0 \to 1$ and $f:1 \to 0$ and a pair of $2$-cells $\eta:id_{1} \Rightarrow uf$ and $\epsilon:fu \Rightarrow id_{0}$ subject to the triangle equations
$$\xy
(0,0)*+{0}="00"; (0,-15)*+{1}="01";
(20,0)*+{0}="10";(20,-15)*+{1}="11";
{\ar_{f} "00"; "01"}; 
{\ar^{1} "00"; "10"};
{\ar|{u} "01"; "10"}; 
{\ar_{1} "01"; "11"};
{\ar^{f} "10"; "11"};   
{\ar@{=>}^{\eta}(5,-2)*+{};(5,-8)*+{}};
{\ar@{=>}^{\epsilon}(15,-7)*+{};(15,-13)*+{}};
(30,-8)*+{= id_{f}}="11";
(45,-8)*+{\textnormal{and}}="11";
\endxy
\hspace{0.5cm}
\xy
(0,0)*+{1}="00"; (0,-15)*+{0}="01";
(20,0)*+{1}="10";(20,-15)*+{0}="11";
{\ar_{u} "00"; "01"}; 
{\ar^{1} "00"; "10"};
{\ar|{f} "01"; "10"}; 
{\ar_{1} "01"; "11"};
{\ar^{u} "10"; "11"};   
{\ar@{=>}^{\epsilon}(5,-8)*+{};(5,-2)*+{}};
{\ar@{=>}^{\eta}(15,-13)*+{};(15,-7)*+{}};
(30,-8)*+{= id_{u}.}="11";
\endxy$$
It is straightforward to translate the above presentation into a presentation in the colimit sense --- four stages are required.  We emphasise here that the maps $f,u:\atwo \to \Adj$ selecting the left and right adjoint in $\Adj$ respectively are cellular.
\item Let $I_2$ be the \emph{free invertible $2$-cell} --- then the induced map $D_2 \to I_2$ is cellular.  In particular the composite $K:P_1 \to D_2 \to I_2$ which adds an invertible $2$-cell between the parallel pair is a cellular $2$-functor.
\item  An adjunction $f \dashv u$ is said to be a reflection if the counit $\epsilon:fu \Rightarrow 1$ is invertible and a coreflection if the unit $\eta:1 \Rightarrow uf$ is invertible.  By forming the pushout of $\epsilon:D_2 \to \Adj$ along $D_2 \to I_2$ we obtain the free reflection $\Ref$, which accordingly is cellular --- in particular the maps $f,u:\atwo \to \Ref$ selecting its left and right adjoints are cellular.  Again, we have the cellular maps $f,u:\atwo \to \Coref$ to the free coreflection, and to the free adjoint equivalence $f,u:\atwo \to \AdjEq$.
\end{enumerate}

\end{Examples}



\subsection{A primer on $2$-categorical limits}

Whilst in category theory one often wishes to consider objects like the pullback of an opspan, in $2$-category theory one often has cause to consider the comma object $f/g$ of an opspan 
\begin{equation*}
\begin{xy}
(0,0)*+{f/g}="00";(15,0)*+{B}="10";(0,-15)*+{A}="01";(15,-15)*+{C}="11";
{\ar^{} "00"; "10"};{\ar_{} "00"; "01"};{\ar^{f} "10"; "11"};{\ar_{g} "01"; "11"};
{\ar@{=>}^{}(10,-4)*+{};(4,-10)*+{}};
\end{xy}
\end{equation*}
or the Eilenberg-Moore object of a monad.  These are not limits in the usual sense of category theory but \emph{weighted limits} in the sense of $\Cat$-enriched category theory \cite{Kelly1982Basic}.  

\subsubsection{Weighted limits}

Recall that for $\A$ a small $2$-category an ($\A$-indexed) weight is a $2$-functor $W:\A \to \Cat$; given a \emph{diagram} $D:\A \to \C$, its limit $L$ weighted by $W$ is defined by a $2$-natural isomorphism $$\C(X,L) \cong [\A,\Cat](W,\C(X,D-)).$$  
The $W$-weighted colimit of a diagram $D:\A^{op} \to \C$ can be defined as the weighted limit of $D^{op}:\A \to \C^{op}$.

\begin{Example}
For instance, the weight for comma objects is the diagram 
\begin{equation*}
\begin{xy}
(5,5)*+{\bullet}="01";(5,-5)*+{\bullet}="0-1";(20,0)*+{\bullet}="10";
(35,0)*+{}="h";(45,0)*+{}="t";{\ar^{} "h"; "t"};
{\ar^{} "01";"10"};{\ar^{} "0-1";"10"};
(55,5)*+{1}="21";(55,-5)*+{1}="2-1";(80,0)*+{\{0 \to 1\}}="20";
{\ar^{0} "21";"20"};{\ar_{1} "2-1";"20"};
\ar@{.}(-5,-10);(-5,10)
\ar@{.}(25,-10);(25,10)
\ar@{.}(-5,-10);(25,-10)
\ar@{.}(-5,10);(25,10)
\ar@{.}(50,-10);(50,10)
\ar@{.}(90,-10);(90,10)
\ar@{.}(50,-10);(90,-10)
\ar@{.}(50,10);(90,10)
\end{xy}
\end{equation*}
where the pair of functors in $\Cat$ are named by the objects they select.
\end{Example}
\begin{Example}
The conical limits of ordinary category theory are those weighted by the terminal weight $\Delta_1:\A \to \Cat$ where $\A$ is merely a category (viewed as a locally discrete $2$-category).  
\end{Example}

\begin{Remark}
The weighted limit $L$ comes equipped with a universal morphism $\eta:W \to \C(L,D-)$, and so for each $x \in Wj$ a morphism $$\eta_{j}(x):L \to Dj$$ in $\C$.  We will refer to such morphisms as \emph{limit projections}.  It follows from the universal property of $L$ that the limit projections have a number of properties: here, we make note only of the fact that they are \emph{jointly conservative.}
\end{Remark}

\subsubsection{Flexible limits}

Our primary focus will be on flexible limits \cite{Bird1989Flexible} --- by which we mean limits weighted by \emph{flexible weights}.  We write $Flex$ for the class of flexible weights, so that $Flex(\A) \subseteq [\A,\Cat]$ consists of the $\A$-indexed flexible weights.  These can be defined in several ways.  

\begin{enumerate}
\item The most elementary approach is to define flexible limits are those \emph{constructible} from products, inserters, equifiers and splittings of idempotents.  The reader will certainly be familiar with products and splittings of idempotents.  Given a parallel pair $f,g:A \rightrightarrows B$ the \emph{inserter} $I$ of $f$ and $g$ comes equipped with a morphism $i:I \to A$ and $2$-cell $\eta: f \circ i \Rightarrow g \circ i$ as depicted below left

\begin{equation*}
\begin{xy}
(0,0)*+{I}="00";(20,8)*+{A}="10";(20,-8)*+{A}="01";(40,0)*+{B}="11";
{\ar^{i} "00"; "10"};{\ar_{i} "00"; "01"};{\ar^{f} "10"; "11"};{\ar_{g} "01"; "11"};
{\ar@{=>}^{\eta}(20,4)*+{};(20,-4)*+{}};
\end{xy}
\hspace{1.5cm}
\begin{xy}
(-20,0)*+{E}="-10";
(0,0)*+{A}="00";(25,0)*+{B}="10";
{\ar^{e} "-10"; "00"};{\ar@/^1pc/^{f} "00"; "10"};{\ar@/_1pc/_{g} "00"; "10"};
{\ar@{=>}_{\alpha}(10,3)*+{};(10,-3)*+{}};
{\ar@{=>}^{\beta}(15,3)*+{};(15,-3)*+{}};
\end{xy}
\end{equation*}
and is universal amongst such triples $(I,i,\eta)$ of this shape.  

Again given $f,g:A \rightrightarrows B$, but now further equipped with a parallel pair of $2$-cells $\alpha$ and $\beta$ as above right, the \emph{equifier} of $\alpha$ and $\beta$ is given by a pair $(E,e)$, as depicted, satisfying $\alpha \circ e = \beta \circ e$.  It is universal amongst such pairs with this property.

The class of limits constructible from products, inserters and equifiers are called pie limits \cite{Blackwell1989Two-dimensional, Power1991A-characterization}.  This class includes comma objects, pseudo, lax and oplax limits, Eilenberg-Moore objects of monads and most other important $2$-categorical limits.  The correct intuition about this class, an intuition which can be made precise, is that pie limits are those whose ``limiting cone" does not force any new equations between morphisms.  In particular, pullbacks and equalisers are not pie limits.  

By ``constructible" in the above, we mean it in the sense the saturation of a class of weights \cite{Kelly2005Notes}.  That is, given a class $\Phi$ of weights, the \emph{saturation} $\Phi^{*}$ of $\Phi$ is the class of weights with $\Phi^{*}(\A) \subseteq [\A,\Cat]$ defined to be the closure of the representables under $\Phi$-colimits.  Then letting $PIE$ be the class of weights for products, inserters and equifiers, a weight $W:\A \to \Cat$ is, by definition, pie just when when it belongs to $PIE^{*}(\A) \subseteq [\A,\Cat]$ whilst $Flex(\A) = PIES^{*}(\A)$ where $S$ stands for the weight for split idempotents.  

In the above cases however, the generating weighted limits are by no means unique, and somewhat arbitrary --- for instance, flexible limits are also those constructible from weighted pseudolimits and splittings of idempotents.  Two more conceptually satisfying approaches are described below.

\item The second uses the natural model structure on $\Cat_0$.  This induces the \emph{projective} model structure on $[\A,\Cat]_0$ in which the weak equivalences, fibrations and trivial fibrations are defined pointwise as in $\Cat_0$.  This has generating cofibrations the set $I^{*}=\{i \times \A(a,-):i \in I\}$ where $I$ the set of generating cofibrations in $\Cat_0$ described in Section~\ref{sect:catCof}.  Now a weight is pie just when it is $I^{*}$-cellular and flexible just when it is an $I^{*}$-cofibrant object \cite{Lack2007Homotopy-theoretic}.

\item Finally we describe the first approach, which is the $2$-categorical one of \cite{Bird1989Flexible}.  To this end, recall that the identity on objects inclusion $[\A,\Cat] \hookrightarrow Ps(\A,\Cat)$ has a left adjoint $Q$ called the \emph{pseudomorphism classifier}; thus one has the counit, a $2$-natural transformation $p_W:QW \to W$, and the unit, a pseudonatural transformation $q_W:W \to QW$, satisfying the triangle equation $p_W \circ q_W = 1$.  In fact $p_W$ is a surjective equivalence in $Ps(\A,\Cat)$ with section $q_W$.  The weight $W$ is said to be flexible if $p_W$ admits a section in $[A,\Cat]$.  \begin{footnote}{This says that the flexible weights are those admitting coalgebra structure for the copointed endofunctor $(Q,p)$.  In fact the pie weights are those admitting coalgebra structure for $Q$ as a comonad \cite{Lack2012Enhanced, Bourke2013On}.}\end{footnote}

It follows that $p_W:QW \to W$ is a surjective equivalence in $[\A,\Cat]$.  Using that $[\A,\Cat](QW,V) \cong Ps(\A,\Cat)(W,V)$ it follows that for flexible $W$ the inclusion $[\A,\Cat](W,V) \hookrightarrow Ps(J,\Cat)(W,V)$ is an equivalence for all $V$;  thus each pseudonatural transformation with source $W$ is isomorphic to a $2$-natural transformation.

\subsubsection{Bilimits}
Given $W$ a weight and $D$ a diagram as before, the $W$-weighted bilimit $B$ is defined by a pseudonatural equivalence $$\C(X,B) \cong Ps(\A,\Cat)(W,\C(X,D-)).$$  Unlike weighted limits, weighted bilimits are of course only determined up to equivalence.  If the genuine weighted limit $L$ exists, to say that it is the bilimit is precisely to say that the composite
$$\C(X,L) \cong [\A,\Cat](W, \C(X,D-)) \hookrightarrow Ps(\A,\Cat)(W, \C(X,D-))$$
is an equivalence for each $A \in \C$; in other words that $$[\A,\Cat](W, C(X,D-)) \hookrightarrow Ps(\A,\Cat)(W, C(X,D-))$$ is an equivalence.  As remarked above, this is the case for flexible weights --- thus \emph{flexible limits are bilimits}.  

However, other kinds of limit, such as \emph{pullbacks of isofibrations} (see \cite{Joyal1993Pullbacks}) and \emph{transfinite cocomposites of isofibrations} (see Proposition A.3(1) of \cite{Bourke2018Iterated}) are also bilimits.  Pullbacks of isofibrations will play a central role in the present paper.

\end{enumerate}

\subsubsection{Finite pie and finite flexible limits}
We define the class of \emph{finite pie limits} $Pie_f$ and \emph{finite flexible limits} $Flex_f$ to be those generated by finite products, inserters, equifiers and, in the latter case, also splittings of idempotents.  Thus we have, by definition, $Pie_f = (P_{f}IE)^{*}$ and $Flex_f = (P_{f}IES)^{*}$ where $P_f$ indicates the collection of weights for \emph{finite products}.\begin{footnote}{The class of finite flexible weights defined above does not coincide with the class described in \cite{Bird1989Flexible} but rather, by Theorem 5.3 of \emph{loc.cit.}, its saturation.  We use the saturated form as our definition here since the properties of finite flexible limits that we are interested in --- namely, commutativity with a class of colimits --- are invariant under saturation.}\end{footnote}
  By Proposition 5.2 of \cite{Bird1989Flexible} powers by finitely presentable categories are also finite flexible.
\subsection{Accessible $2$-categories and their underlying categories}

Let $\lambda$ be a regular cardinal, $\A$ a small $\lambda$-filtered category and $\C$ a $2$-category.  Given a functor $D:\A \to \C$ (where $\A$ is viewed as a locally discrete 2-category) its $\lambda$-filtered colimit is an object $X$ equipped with an \emph{isomorphism of categories} $$\C(X,Y) \cong [\A,\C](D,\Delta(Y))$$ $2$-natural in $Y$.   Since we require an isomorphism of categories, rather than a mere bijection of underlying sets, this is a slightly stronger condition than being the colimit in the underlying category $\C_0$, but see Proposition~\ref{prop:equivalent}(1) below.  

An object $X$ of $\C$ is said to be $\lambda$-presentable if the $2$-functor $\C(X,-):\C \to \Cat$ preserves $\lambda$-filtered colimits.  Then $\C$ is said to be $\lambda$-accessible if it has $\lambda$-filtered colimits and a set $\G$ of $\lambda$-presentable objects such that each object of $\C$ is a $\lambda$-filtered colimit of objects in $\G$.  A $2$-category $\C$ is accessible if it is $\lambda$-accessible for some $\lambda$.  We note that another definition of enriched accessibility is given in \cite{BorceuxQuinteiro}.

The following proposition shows that if $\C$ has powers with the generic arrow $\atwo$ then accessibility of $\C$ as a $2$-category can be understood in terms of accessibility of its underlying category.  The following result can be easily extended to general $\cv$. A slight variant is given in Section 7.3 of \cite{Bourke2020Adjoint}.

\begin{Prop}\label{prop:equivalent}
Let $\C$ have powers with $\atwo$.  Then
\begin{enumerate}
\item $\C$ has $\lambda$-filtered colimits if and only if $\C_0$ does.
\item Moreover $\C$ is $\lambda$-accessible if and only if $\C_0$ is and $\lambda$-filtered colimits in $\cc$ commute with powers by $\atwo$.
\end{enumerate}
\end{Prop}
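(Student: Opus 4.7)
The key observation is that in a 2-category $\C$ with cotensors by $\atwo$, a $2$-cell between morphisms $X \to Y$ corresponds bijectively to a $\C_0$-morphism $X \to Y^{\atwo}$ (with the two endpoint projections $Y^{\atwo} \rightrightarrows Y$ recovering source and target). This reduces $\Cat$-enriched hom-category data to underlying-category data about the cotensor $(-)^{\atwo}$, and is the engine for both parts.

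For (1), the forward direction is automatic since enriched colimits give underlying ones (take objects of hom-categories). For the converse, let $X = \mathrm{colim}\, D$ in $\C_0$ for a $\lambda$-filtered $D : \A \to \C$. One must show $\C(X,Y) \cong [\A,\C](D,\Delta Y)$ as categories. On objects both sides consist of cones $D \to \Delta Y$ in $\C_0$, identified by the underlying universal property. On morphisms a $2$-cell $X \Rightarrow Y$ corresponds to a $\C_0$-map $X \to Y^{\atwo}$, hence by the underlying universal property at $Y^{\atwo}$ to a cone $D \to \Delta Y^{\atwo}$, which under the cotensor correspondence applied componentwise is precisely a modification between the associated $2$-natural transformations. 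Composition is handled uniformly by the same argument applied to cotensors with the pushout $\atwo \cup_1 \atwo$ (and identities by cotensor with the terminal).

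For (2), the central computation is that for any $X \in \C$ and $\lambda$-filtered diagram $D$ in $\C$, there are natural isomorphisms
\begin{align*}
\C_0\bigl(X,(\mathrm{colim}\, D)^{\atwo}\bigr) &\cong \mathrm{mor}\,\C(X,\mathrm{colim}\, D), \\
\mathrm{colim}\,\C_0(X,D^{\atwo}) &\cong \mathrm{colim}\,\mathrm{mor}\,\C(X,D) \cong \mathrm{mor}\,\mathrm{colim}\,\C(X,D),
\end{align*}
obtained solely from the cotensor correspondence and the fact that $\mathrm{mor}:\Cat\to\Set$ preserves $\lambda$-filtered colimits. The two right-hand sides are linked by $X$ being $\lambda$-presentable in $\C$; a central intermediary $\mathrm{colim}\,\C_0(X,D^{\atwo}) \cong \C_0(X,\mathrm{colim}\, D^{\atwo})$ is supplied by $X$ being $\lambda$-presentable in $\C_0$; and finally $\C_0(X,\mathrm{colim}\, D^{\atwo}) \cong \C_0(X,(\mathrm{colim}\, D)^{\atwo})$ is the commutativity of $\lambda$-filtered colimits with $\atwo$-cotensors. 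In the forward direction of (2), $\C$ being $\lambda$-accessible yields a set of objects that are $\lambda$-presentable in both senses; running the chain then yields $\C_0(X,\mathrm{colim}\, D^{\atwo}) \cong \C_0(X,(\mathrm{colim}\, D)^{\atwo})$ for all $\lambda$-presentable $X$, and density of $\lambda$-presentables in the $\lambda$-accessible $\C_0$ forces $(\mathrm{colim}\, D)^{\atwo} \cong \mathrm{colim}\, D^{\atwo}$. In the converse direction, the commutativity hypothesis plus $\lambda$-presentability of generators in $\C_0$ runs the chain in the opposite direction, producing $\C(X, \mathrm{colim}\, D) \cong \mathrm{colim}\, \C(X, D)$ as categories and so upgrading underlying $\lambda$-presentable generators to $2$-categorical ones; generation is inherited from $\C_0$.

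The only delicate point, present in both parts, is that the $2$-cell bijection must respect composition; this is routine once one accepts that all pasting-shape data in a hom-category is encoded by cotensors with the corresponding cellular $2$-categories, to which exactly the same cotensor/cone translation applies. The hard part is recognising that the commutativity hypothesis in (2) is not extraneous but is the precise obstruction measuring the gap between $\lambda$-presentability in $\C_0$ and in $\C$.
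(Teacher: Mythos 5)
Your argument is correct, and for part (1) and the converse half of part (2) it is essentially the paper's proof: both rest on the adjunction $\Cat_0(\atwo,\C(X,Y))\cong\C_0(X,Y^{\atwo})$ together with the fact that $\atwo$ is a finitely presentable strong generator of $\Cat_0$ (so that $1$, being a retract of $\atwo$, comes along for free), which is exactly how the paper reduces invertibility of the comparison functor to a bijection of underlying hom-sets and transfers $\lambda$-presentability between $\cc$ and $\cc_0$. The one place you genuinely diverge is the forward implication of the exactness clause in (2). The paper uses the restricted Yoneda embedding $\cc\to[\cc_{\lambda}^{op},\Cat]$, which is fully faithful and preserves limits and $\lambda$-filtered colimits, to reduce the commutation of $(-)^{\atwo}$ with $\lambda$-filtered colimits to the case $\C=\Cat$, where it holds because $\atwo\times\atwo$ is finitely presentable in $\Cat_0$. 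You instead test the canonical comparison $\mathrm{colim}(D^{\atwo})\to(\mathrm{colim}\,D)^{\atwo}$ against a set of objects $\lambda$-presentable in both senses and invoke joint conservativity. This works, but it quietly relies on two facts you should make explicit: that your chain of isomorphisms is the one induced by the canonical comparison (a routine but necessary naturality check), and that the chosen generating set detects isomorphisms in $\C_0$ --- for instance because every $\lambda$-presentable object of $\C_0$ is a retract of an object of that set, so the chain extends to all of $(\C_0)_{\lambda}$, which is dense. The paper's presheaf-embedding route sidesteps both points at the cost of invoking the embedding; your route is more elementary and, as you observe, makes visible exactly where the commutativity hypothesis enters as the obstruction between the two notions of presentability.
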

\begin{proof}
The first part is standard, but we give it for completeness.  Let $D:\A \to \C_0$ be a filtered diagram, with colimiting cocone $\eta:D \to \Delta(X)$ in $\C_0$.  We must show that the induced functor $\C(X,A) \to [\A,\C](D,\Delta(A))$ is invertible.  Since $\atwo$ is a strong generator in $\Cat_0$ this is equally to show that the top row below is a bijection for all $A$.
\begin{equation*}
\xymatrix{
\Cat_{0}(\atwo,\C(X,A)) \ar[d]^{\cong} \ar[rr]^{} && \Cat_{0}(\atwo,[\A,\C](D,\Delta(A))) \ar[d]^{\cong} \\
\C_0(X,A^{\atwo}) \ar[rr]_{} && [\A,\C_0](D,\Delta(A^{\atwo}))
}
\end{equation*}
Since $\C$ has powers by $\atwo$, we have an isomorphism of rows as above. Thus it suffices to show that the bottom row is a bijection and this is simply the universal property of the filtered colimit in $\C_0$.

For the second part suppose that $\C$ is $\lambda$-accessible.  We must show that each $\lambda$-presentable object $A$ in $\cc$ is $\lambda$-presentable in $\cc_0$.  We have a commutative diagram as below
\begin{equation*}
\xymatrix{
\cc_0 \ar@/{_1pc}/[rrrr]_{\cc_0(A,-)} \ar[rr]^{\cc(A,-)_0} && \Cat_0 \ar[rr]^{\Cat_0(1,-)} && \Set
}
\end{equation*}
Now $\cc(A,-)$, and hence $\cc(A,-)_0$, preserves $\lambda$-filtered colimits.  Since $1$ is finitely presentable in $\Cat$, so does the right leg above whence the composite does too.  Therefore $A$ is $\lambda$-presentable in $\cc_0$, so that by the first part $\cc_0$ is $\lambda$-accessible.  Now since $\cc$ is $\lambda$-accessible the inclusion $J:\cc_{\lambda} \to \cc$ induces a fully faithful $2$-functor $\cc \to [\cc_{\lambda}^{op},\Cat]$ preserving limits and $\lambda$-filtered colimits.  Thus to prove the stated exactness property holds in $\cc$ it suffices to establish it in $\Cat$.  Now the powering $2$-functor is the representable $\Cat(\atwo,-):\Cat \to \Cat$ whose underlying functor is $[\atwo,-]:\Cat_0 \to \Cat_0$.  To show that this preserves $\lambda$-filtered colimits, it is enough to show that $\Cat_0(\atwo,-) \circ [\atwo,-] \cong \Cat_0(\atwo \times \atwo,-)$ does so, and this is the case since $\atwo \times \atwo$ is finitely presentable in $\Cat_0$.  

Now suppose $\C_0$ is $\lambda$-accessible and that $\lambda$-filtered colimits commute with powers by $\atwo$.  Let $A$ be a $\lambda$-presentable object of $\cc_0$ --- we must prove that $\cc(A,-):\cc \to \Cat$ preserves $\lambda$-filtered colimits or, equivalently, that its underlying functor $\cc(A,-)_0:\cc_0 \to \Cat_0$ does so.  Since $\Cat_0$ is lfp, much as before, it suffices to show that $\Cat_0(\atwo,\cc(A,-)_0):\cc_0 \to \Set$ preserves $\lambda$-filtered colimits.  Now $[\atwo,\cc(A,-)] \cong \cc(A,(-)^{\atwo})$ so taking underlying functors we have $\Cat_0(\atwo,\cc(A,-)_0) \cong \cc_0(A,(-)^{\atwo})$.  Now the right hand side preserves $\lambda$-filtered colimits since $\cc_0(A,-)$ does and $(-)^{\atwo}$ does by assumption.
\end{proof}

\section{The accessible $2$-categories of interest and their stability under limits}\label{sect:lp}

\begin{Def}\label{defn:lp}
A locally small $2$-category $\C$ is said to belong to $\Lp$ if
\begin{itemize}
\item $\C$ is accessible with filtered colimits;
\item $\C$ has flexible limits;
\item finite flexible limits commute with filtered colimits in $\C$.
\end{itemize}
\end{Def}

A morphism of the large $2$-category $\Lp$ is a $2$-functor preserves the limits and colimits in question, whilst a $2$-cell of $\Lp$ is a 2-natural transformation.  The name $\Lp$ reflects the fact that its objects are certain \emph{weakly locally presentable} $2$-categories --- in \cite{Lack2012Enriched} a $2$-category is said to be weakly locally presentable if it is accessible with flexible limits.

\begin{Remark}
There is some freedom in which colimits we take in the above ---we could, for instance, replace filtered colimits by those colimits which commute with finite flexible limits in $\Cat$.  Or more generally, we could use any subclass of these which contain $\lambda$-filtered colimits for some fixed regular cardinal $\lambda$.
\end{Remark}


\begin{Remark}
Let us mention that it follows immediately from Section 8.2 of ~\cite{Bourke2020Adjoint} that each $2$-category in $\Lp$ admits a strong form of bicolimits and that each morphism of $\Lp$ admits a strong form of left biadjoint.
\end{Remark}

 We will occasionally make of the larger $2$-category $\Acc_{\omega}$, an object of which is a $2$-category whose underlying category is accessible with filtered colimits.  Morphisms of $\Acc_{\omega}$ are $2$-functors whose underlying functor preserves filtered colimits.  Again morphisms are $2$-natural transformations.

\begin{Prop}\label{prop:limits}
Both $\Acc_{\omega}$ and $\Lp$ are closed in $\Twocat$ under 
bilimits --- in particular, under products, powers by small categories, pullbacks of isofibrations, and transfinite cocomposites of isofibrations.
\end{Prop}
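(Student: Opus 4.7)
The strategy is to reduce to the four listed generating classes of bilimits (products, powers by small categories, pullbacks of isofibrations, and transfinite cocomposites of isofibrations), which together build all bilimits in $\Twocat$, and then to verify closure of each of $\Acc_{\omega}$ and $\Lp$ under each such construction. In each case, the bilimit is computed as the corresponding strict $2$-limit in $\Twocat$, which itself is computed componentwise at the level of underlying categories.

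For $\Acc_{\omega}$ we must check that the underlying category of the bilimit is accessible with filtered colimits and that the projections preserve filtered colimits. Accessibility with filtered colimits follows from the classical Makkai--Par\'e stability results: accessible categories with filtered colimits are closed in $\CAT_{0}$ under small products, functor categories $[K_{0},-]$ for small $K_{0}$, pullbacks of isofibrations, and transfinite cocomposites of isofibrations. Combined with Proposition~\ref{prop:equivalent}(1), this gives accessibility of the bilimit as a $2$-category. Filtered colimits in the bilimit are then computed componentwise, using that the morphisms of the diagram preserve them, so the projections do likewise.

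For $\Lp$ there are two extra conditions to verify: the existence of flexible limits in the bilimit, and the commutation of finite flexible limits with filtered colimits. Flexible limits being constructible from products, inserters, equifiers and splittings of idempotents, one checks in each of the four cases that these are computed componentwise --- the crucial point being that the morphisms of the diagram preserve flexible limits, so componentwise flexible limits glue into a flexible limit in the bilimit. Componentwise computation of both sides then transfers the commutation of finite flexible limits with filtered colimits from the components to the bilimit, and the projections preserve flexible limits and filtered colimits for the same componentwise reason.

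The main technical subtlety is in the two ``isofibration'' cases, which are the only ones where a strict $2$-limit of cones need not coincide with a bilimit in general. What saves us is that pullbacks of isofibrations and transfinite cocomposites of isofibrations \emph{are} bilimits in their own right, as recalled in Section~\ref{sect:background}; the projections from such limits are themselves isofibrations and are jointly conservative, which is precisely what allows all the pointwise/componentwise verifications of the preceding paragraphs to lift cleanly from the components to the bilimit.
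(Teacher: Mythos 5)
Your argument is correct in substance but organized in the opposite direction to the paper's. You decompose bilimits into the four named generating classes and verify closure case by case; the paper instead proves closure under \emph{all} flexible limits in one stroke --- citing Proposition 6.2 of Bird's thesis for the closure of $\mathbf{CTS}_{\Phi}$ and $\mathbf{COCTS}_{\Phi}$ in $\VCAT$ under flexible limits, and Theorem 5.1.6 of Makkai--Par\'e for the closure of $\Acc$ in $\CAT$ under bilimits --- handles the exactness condition exactly as you do (jointly conservative limit projections reflect invertibility), and then passes from flexible limits to all bilimits via $\{W,D\}_{bi}\simeq\{QW,D\}$ and equivalence-repleteness, obtaining the four listed cases as corollaries rather than as inputs. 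Your route therefore rests on the unproved assertion that the four classes generate all bilimits; this is true (inserters and equifiers are pullbacks of isofibrations of the form $B^{j}$ for $j$ an isocofibration, cf.\ Proposition~\ref{prop:normal}, and $QW$ is a pie weight), but it deserves justification since the proposition claims closure under all bilimits, not just the four. The other thin spot is the repeated claim that limits and filtered colimits in a strict pullback along an isofibration are ``computed componentwise'': the componentwise (co)limits in the two legs only agree in the base up to isomorphism, so one component must be rectified using the isofibration lifting property --- equivalently, one computes in the pseudo-pullback and transfers along its $2$-equivalence with the strict pullback, which is what ``these are bilimits in their own right'' is really buying you; joint conservativity of the projections is needed only for reflecting the exactness property, not for constructing the limits. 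With these two points made explicit your proof goes through: the paper's formulation buys uniformity (no case analysis and no generation claim), while yours is more elementary in each individual case.
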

\begin{proof}
To begin with, let $\V$ be a complete and cocomplete symmetric monoidal category and $\VCAT$ the large $2$-category of locally small $\V$-categories.  Let $\Phi$ be a class of weights, and let $\mathbf{CTS}_{\Phi} \hookrightarrow \VCAT$ the sub-$2$-category of $\Phi$-cocomplete categories and $\Phi$-continuous functors; similarly let $\mathbf{COCTS}_{\Phi} \hookrightarrow \VCAT$ consist of the $\Phi$-cocomplete categories and $\Phi$-cocontinuous functors.  By Proposition 6.2 of \cite{Bird1984Limits} $\mathbf{CTS}_{\Phi}$ is closed in $\VCAT$ under flexible limits (there called limits of retract type).  Using that a $\V$-category $\C$ has $\Phi$-limits if and only if $\C^{op}$ has $\Phi$-colimits, it follows by duality that $\mathbf{COCTS}_{\Phi}$ is also closed in $\VCAT$ under flexible limits.


Combining the above cases, we see that the $2$-category $\A$ of locally small $2$-categories with flexible limits and filtered colimits, and $2$-functors preserving such, is closed under $\Twocat$ under flexible limits.  Let $\B \hookrightarrow \A$ be the full sub $2$-category of $\A$ whose objects satisfy the exactness property of Definition~\ref{defn:lp}.  Then the flexible limit $L=\{W,D\}$ of a diagram $D:\J \to \B$, formed in $\A$, comes equipped with a jointly conservative set of limit projections $\{\eta_{i}(x):L \to Di:i \in \J, x \in Wi\}$.  Since exactness properties are phrased in terms of certain morphisms being invertible, they are reflected by jointly conservative morphisms that preserve the limits and colimits in question; it follows therefore that the limit $L$ also lies in $\B$.  Thus $\B \hookrightarrow \Twocat$ is closed under flexible limits.  Now if $\{W,D\}_{bi}$ is a bilimit and the flexible limit $\{QW,D\}$ exists, then $\{W,D\}_{bi} \simeq \{QW,D\}$. Since $\B \hookrightarrow \Twocat$ is closed under flexible limits and equivalence-replete, it follows that $\B$ is closed in $\Twocat$ under bilimits.

By Theorem 5.1.6 of \cite{Makkai1989Accessible} the $2$-category $\Acc$ of accessible categories and accessible functors is closed in $\CAT$ under bilimits.  Taking $\cv = \Set$ and $\Phi = filt$ the class of weights for filtered colimits, it follows as before that $\mathbf{COCTS}_{filt} \hookrightarrow \CAT$ is closed under bilimits.  Since the forgetful $2$-functor $U:\Twocat \to \CAT$ preserves bilimits, therefore $\Acc_{\omega} = U^{-1}(\mathbf{COCTS}_{filt} \cap \Acc) \hookrightarrow \Twocat$ is closed under bilimits, whereby the intersection $\Lp = \B \cap \Acc_{\omega}$ is too.  

%
%
%
\end{proof}

The $2$-categories of interest to us will be shown in Section~\ref{sect:examples} to belong to $\Lp$.  However, they each share a \emph{crucial further property} which we turn to now.  Let $\C^{\atwo}=[\atwo,\C]$ denote the $2$-category whose objects are arrows in $\C$ and whose morphisms are strictly commuting squares, and let $$RE(\C) \hookrightarrow \C^{\atwo}$$ denote the full sub 2-category consisting of the retract equivalences in $\C$.  By the preceding proposition, if $\C \in \Acc_{\omega}/\Lp$ then $\C^{\atwo} \in \Acc_{\omega}/\Lp$ too.
%

\begin{Def}
Let $\C \in \Acc_{\omega}$.  We say that $\C$ satisfies Property $\mathcal M$ if $RE(\C) \hookrightarrow \C^{\atwo} \in \Acc_\omega$ --- that is, the full subcategory $RE(\C)_0 \hookrightarrow \C^{\atwo}_0$ of retract equivalences is accessible and closed under filtered colimits.
\end{Def}

Let $\Acc_{\omega,\mathcal{M}}$ and $\Lpm$ denote the full sub-2-categories of $\Acc_{\omega}$ and $\Lp$ whose objects satisfy Property $\mathcal M$.

\begin{Remark}
In the above the symbol $\mathcal M$ stands for \emph{map}.  From the model categorical perspective, it is natural also to consider the analogue of this property for the other key classes of maps ---namely, the isofibrations and equivalences.  For $\C \in \Lp$ we prove in Proposition~\ref{prop:isofibrations} that these follow from Property $\mathcal M$ itself.
\end{Remark}

We have an alternative description of Property~$\mathcal{M}$ for those $\C$ belonging to $\Lp$.

\begin{Lemma}
$\C \in \Lp$ satisfies Property $\mathcal M$ if and only if the inclusion $RE(\C) \hookrightarrow \C^{\atwo}$ belongs to $\Lp$.
\end{Lemma}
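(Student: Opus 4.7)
The backward direction is immediate from unwinding definitions. If the inclusion $RE(\C) \hookrightarrow \C^{\atwo}$ lies in $\Lp$ then, in particular, $RE(\C) \in \Lp$ is accessible with filtered colimits and the inclusion preserves filtered colimits, which together say exactly that $RE(\C)$ is accessible and closed under filtered colimits in $\C^{\atwo}$; this is Property $\mathcal M$.

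For the forward direction, suppose $\C \in \Lp$ satisfies Property $\mathcal M$. Since $\atwo$ is a small category, Proposition~\ref{prop:limits} gives $\C^{\atwo} \in \Lp$. My plan is to reduce everything to the single claim that $RE(\C)$ is closed in $\C^{\atwo}$ under flexible limits. Given this, together with closure under filtered colimits from Property $\mathcal M$, the $2$-category $RE(\C)$ inherits both its flexible limits and filtered colimits from $\C^{\atwo}$ and the inclusion preserves them; the commutativity of finite flexible limits with filtered colimits descends from $\C^{\atwo}$ to $RE(\C)$; accessibility is part of Property $\mathcal M$; and so $RE(\C) \in \Lp$ with the inclusion a morphism of $\Lp$.

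The key point is therefore the following general fact: in any $2$-category $\D$ with flexible limits, the full sub-$2$-category $RE(\D) \hookrightarrow \D^{\atwo}$ is closed under flexible limits. To prove this, I would first recall the representable characterisation: a morphism $f$ in $\D$ is a retract equivalence if and only if $\D(X,f)$ is a retract equivalence in $\Cat$ for every $X \in \D$. This follows from the corresponding facts for equivalences and isofibrations together with the characterisation of retract equivalences as those morphisms which are both, as recalled in Section~\ref{sect:background}. Since representables preserve flexible limits, if $f \in \D^{\atwo}$ is a flexible limit of retract equivalences then $\D(X,f) \in \Cat^{\atwo}$ is the corresponding flexible limit in $\Cat^{\atwo}$ of pointwise retract equivalences in $\Cat$, reducing the problem to the case $\D = \Cat$.

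The case $\D = \Cat$ I would deduce from the model-categorical observation recalled in Section~\ref{sect:background}: flexible weights are exactly the cofibrant objects for the projective model structure on $[\J,\Cat]$, so the weighted limit functor $\{W,-\}\colon [\J,\Cat] \to \Cat$ for $W$ flexible is a right Quillen functor and hence sends pointwise retract equivalences (the projective trivial fibrations) to retract equivalences in $\Cat$, using that every object of $\Cat$ is fibrant. The main obstacle is simply making this model-categorical reduction cleanly; an alternative elementary route would proceed by induction over the generating flexible limits --- products, inserters, equifiers, and splittings of idempotents --- using the explicit characterisation from Section~\ref{sect:catCof} of retract equivalences in $\Cat$ via the right lifting property with respect to $\{J_0, J_1, \nabla\}$, reducing each case to a small concrete lifting argument.
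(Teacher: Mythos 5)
Your proof is correct and follows essentially the same route as the paper's: reduce everything to the closure of $RE(\C) \hookrightarrow \C^{\atwo}$ under flexible limits, use that the representables $\C(X,-)$ preserve flexible limits and both preserve and reflect surjective equivalences to reduce to the case $\C = \Cat$, and then settle that case. The only difference is that the paper simply cites Section 9 of Lack--Rosick\'y's \emph{Enriched weakness} for the $\Cat$ case, whereas you supply a valid argument for it via the $\Cat$-enriched projective model structure (or the sketched induction over products, inserters, equifiers and split idempotents).
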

\begin{proof}
It suffices to prove if $\C$ has flexible limits then $RE(\C) \hookrightarrow \C^{\atwo}$ is closed under flexible limits.  Indeed, since the representables $\C(C,-):\C \to \Cat$ preserve flexible limits and both preserve and \emph{reflect} surjective equivalences, it suffices to prove that $RE(\Cat) \hookrightarrow \Cat^{\atwo}$ is closed under flexible limits.  For this case, see Section 9 of \cite{Lack2012Enriched}.
\end{proof}

\begin{Example}\label{ex:1}
If $\C$ is a locally finitely presentable $2$-category and the surjective equivalences in $\C$ are of the form $RLP(I)$ for a set $I$ of morphisms between finitely presentable objects in $\C$ then $\C \in \Lpm$.  Indeed, $RLP(I) \to (\C_0)^{\atwo}$ is accessible and accessibly embedded by Proposition 3.3 of \cite{Rosicky2007On} and closed under filtered colimits since the objects involved in $I$ are finitely presentable.  In particular, this is the case when $\C =\Cat$ by Example~\ref{sect:catCof}.  Therefore $\Cat$ belongs to $\Lpm$.
\end{Example}
\begin{Example}\label{ex:2}
Let us show that $\Cat^{\atwo}$ does \emph{not} belong to $\Lpm$.  By Remark 3.12(3) of \cite{Borceux2007Purity} the split epis in $\Set^{\atwo}$ are not closed under filtered colimits.  Let $I:\Set \to \Cat_0$ sending a set to the indiscrete category with the same set of objects.  Then $[\atwo,I]:[\atwo,\Set] \to [\atwo,\Cat]_0$ preserves filtered colimits and the split epis in $[\atwo,\Set]$ are the preimage of the retract equivalences in $[\atwo,\Cat]$.  Applying $[\atwo,I]$ to the example from \cite{Borceux2007Purity} of a filtered colimit of split epis in $\Set^{\atwo}$ which is not a split epi hence produces a filtered colimit of surjective equivalences in $\Cat^{\atwo}$ which is not a surjective equivalence, as required.

The morphisms of $\Cat^{\atwo}$ are strict maps, and therein lies the problem.  On the other hand, we will see in Proposition~\ref{prop:acc2} that $Ps(\atwo,\Cat)$ does belong to $\Lpm$.

\end{Example}

Note that $\Lpm$, unlike $\Lp$, is \emph{not} closed in $\Twocat$ under powers by small categories --- this is the content of Example~\ref{ex:2}.  However, we will show shortly that it does have most of the limit closure properties of Proposition~\ref{prop:limits}.  

To prove this, we will find it convenient to work with an equivalent form of Property~$\mathcal M$.  To this end, recall that a \emph{cloven} surjective equivalence $f:A \to B \in \C$ is one equipped with a choice of section $r:B \to A$.  Let $RE_{c}(\C)$ denote the $2$-category of cloven retract equivalences in $\C$ --- these are retract equivalences equipped with a choice of section, but with $1$-cells and $2$-cells as in $RE(\C)$ (that is, not required to commute with the sections.)  Evidently, the forgetful 2-functor $RE_{c}(\C) \to RE(\C)$ is a $2$-equivalence and furthermore we have a commutative diagram of forgetful $2$-functors as below.
\begin{equation*}
\xymatrix{
RE_{c}(\C) \ar[dr] \ar[r]^{\simeq} & RE(\C) \ar[d] \\
& \C^{\atwo}
}
\end{equation*}
Thus $\C \in \Acc_{\omega}$ satisfies Property $\mathcal M$ just when the forgetful $2$-functor $RE_{c}(\C) \to \C^{\atwo}$ belongs to $\Acc_{\omega}$.

Now consider the free surjective equivalence $RE$ depicted in its entirety below
$$\xy
(0,0)*+{0}="00"; (0,-14)*+{0}="01";
(20,0)*+{1}="10";(20,-14)*+{1}="11";
{\ar_{1} "00"; "01"}; 
{\ar^{p} "00"; "10"};
{\ar^{q} "10"; "01"}; 
{\ar_{p} "01"; "11"};
{\ar^{1} "10"; "11"};   
(6,-3)*{{\eta}};
(6,-6)*{{\cong}};
\ar@{.}(-5,-20);(-5,5);
\ar@{.}(25,-20);(25,5);
\ar@{.}(-5,-20);(25,-20);
\ar@{.}(-5,5);(25,5);
\endxy$$
 wherein $p \eta = id$, and let $p:\atwo \to RE$ denote the map selecting the surjective equivalence $p$.  The following lemma is straightforward, but we will give the details since we will use the construction twice.

\begin{Lemma}\label{lem:pullback}
We have a pullback
$$\xymatrix{
RE_{c}(\C) \ar[d] \ar[r] & Ps(RE,\C) \ar[d]^{p^{*}} \\
\C^{\atwo} \ar@{{(}->}[r] & Ps(\atwo,\C)
}$$
of $2$-categories, in which the bottom leg is the inclusion.
\end{Lemma}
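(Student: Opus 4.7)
The plan is to verify the pullback property at the levels of objects, $1$-cells, and $2$-cells, exploiting the presentation of $RE$ as the $2$-category freely generated by two objects $0, 1$, $1$-cells $p\colon 0 \to 1$ and $q\colon 1 \to 0$, and an invertible $2$-cell $\eta\colon 1_{0} \cong qp$ subject to the relations $pq = 1_{1}$ and $p\eta = \mathrm{id}_{p}$. On objects, a $2$-functor $F\colon RE \to \C$ is precisely a cloven retract equivalence in $\C$, and since $\C^{\atwo} \hookrightarrow Ps(\atwo, \C)$ is bijective on objects, the pullback at this level recovers the objects of $RE_{c}(\C)$.

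For $1$-cells, I would unpack a pseudonatural transformation $\alpha\colon F \to G$ in $Ps(RE, \C)$ as components $\alpha_{0}, \alpha_{1}$ together with invertible $2$-cells $\alpha_{p}, \alpha_{q}$ satisfying the standard coherence axioms. Requiring $p^{*}\alpha$ to lie in $\C^{\atwo}$ forces $\alpha_{p} = \mathrm{id}$, equivalently $G(p)\alpha_{0} = \alpha_{1}F(p)$. The crucial claim is that under this constraint $\alpha_{q}$ is uniquely determined by $(\alpha_{0}, \alpha_{1})$, so $1$-cells in the pullback correspond bijectively with strictly commuting squares over $p$ --- exactly the $1$-cells of $RE_{c}(\C)$. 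Uniqueness follows from the pseudonaturality axiom at the generator $\eta$, which expresses $\alpha_{q} \cdot F(p)$ as a pasting of $\alpha_{0} \cdot F(\eta)$ with $(G(\eta)^{-1}) \cdot \alpha_{0}$; together with the fact that whiskering by the surjective equivalence $F(p)$ is faithful (since $F(p)$ induces a fully faithful functor on hom-categories), this pins $\alpha_{q}$ down. Existence is witnessed by taking $\alpha_{q}$ to be the mate of $\alpha_{p} = \mathrm{id}$ across the adjoint equivalences $F(p) \dashv F(q)$ and $G(p) \dashv G(q)$; invertibility is then automatic.

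For $2$-cells the analysis is parallel but simpler: a modification in $Ps(RE, \C)$ consists of a pair $\Phi_{0}, \Phi_{1}$ satisfying a naturality axiom for each generating $1$-cell. The axiom for $p$ coincides with the $2$-cell axiom in $\C^{\atwo}$, while the axiom for $q$ follows automatically from the formulas for $\alpha_{q}, \beta_{q}$ together with $pq = 1$. I expect the main obstacle to lie in the $1$-cell step --- specifically, in checking that the canonically constructed $\alpha_{q}$ satisfies all remaining pseudonaturality axioms (most notably the composition axiom at $pq = 1_{1}$ and the $\eta$-axiom simultaneously), so that the datum $(\alpha_{0}, \alpha_{1}, \mathrm{id}, \alpha_{q})$ genuinely assembles into a pseudonatural transformation. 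This is a routine but somewhat lengthy pasting-diagram computation relying essentially on $p\eta = \mathrm{id}$ and $pq = 1_{1}$.
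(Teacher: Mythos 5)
Your proposal is correct and follows essentially the same route as the paper: both identify objects of the pullback with cloven retract equivalences and then exploit the full faithfulness of the surjective equivalences $F(p)$, $G(p)$ to show that the missing component $\alpha_{q}$ (and, at the level of $2$-cells, the modification datum) is uniquely determined, reducing the remaining pseudonaturality and modification axioms to checks after whiskering with a faithful map. The only cosmetic difference is that the paper pins down $\alpha_{q}$ via the composition axiom at $pq = 1_{1}$ (postcomposing with the fully faithful $G(p)$), whereas you use the axiom at $\eta$ (precomposing with $F(p)$) together with a mate construction for existence; both arguments work and yield the same $2$-cell.
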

\begin{proof}

Let $f:A \to B$ be a cloven surjective equivalence, with cleavage $s_f$.  Since the surjective equivalence $\C(A,f):\C(A,A) \to \C(A,B)$ is fully faithful there exists a unique invertible $2$-cell $\rho_f:s_f \circ f \cong 1_A$ which postcomposed by $f$ yields the identity $f \circ s_f = 1_B$.  This specifies a unique $2$-functor $K(f,s_f):RE \to \C$ with $K(p)=f$ and $K(q)=s_f$.

Now consider a map $(u,v):(f,s_f) \to (g,s_g) \in RE_{c}(\C)$ as in the right hand square below.

\begin{equation}\label{eq:se}
\xy
(0,0)*+{B}="A";(20,0)*+{A}="R";(40,0)*+{B}="B";
(0,-15)*+{D}="C";(20,-15)*+{C}="S";(40,-15)*+{D}="D";
{\ar^{s_{f}} "A"; "R"}; 
{\ar^{f} "R"; "B"}; 
{\ar_{s_{g}} "C"; "S"}; 
{\ar_{g} "S"; "D"}; 
{\ar_{v} "A"; "C"}; 
{\ar^{u} "R"; "S"}; 
{\ar^{v} "B"; "D"}; 
(10,-9)*{{\cong}};
(10,-5)*{s^{u}_{v}};
\endxy
\end{equation}
Then since $g$ is fully faithful, there exists a unique invertible $2$-cell $s^{u}_{v}$ with the displayed source and target such that $g \circ s^{u}_{v} = id_{u}$.  This gives the component of a pseudonatural transformation $K(u,v):K(f,s_f) \to K(g,s_g)$ at $q$, the unique such with identity component at $p$.  The remaining pseudonaturality condition concerns naturality at $\eta$, and this is again straightforward using that $g$ is faithful.  Given a second map $(u',v'):(g,s_g) \to (h,s_h) \in RE_{c}(\C)$ we must show that $K(u',v') \circ K(u,v) = K(u'u,v'v)$, and this amounts to checking that we have the following equality of $2$-cells.

\begin{equation*}
\xy
(0,0)*+{B}="A";(20,0)*+{A}="R";
(0,-15)*+{D}="C";(20,-15)*+{D}="S";
(0,-30)*+{F}="E";(20,-30)*+{E}="T";
{\ar^{s_{f}} "A"; "R"}; 
{\ar_{s_{g}} "C"; "S"}; 
{\ar_{s_{h}} "E"; "T"}; 
{\ar_{v} "A"; "C"}; 
{\ar^{u} "R"; "S"}; 
{\ar_{v'} "C"; "E"}; 
{\ar^{u'} "S"; "T"}; 
(10,-9)*{{\cong}};
(10,-5)*{s^{u}_{v}};
(10,-26)*{{\cong}};
(10,-22)*{s^{u'}_{v'}};
(30,-15)*{=};
\endxy
\hspace{0.5cm}
\xy
(0,-8)*+{B}="A";(20,-8)*+{A}="R";
(0,-23)*+{F}="C";(20,-23)*+{E}="S";
{\ar^{s_{f}} "A"; "R"}; 
{\ar_{s_{h}} "C"; "S"}; 
{\ar_{v' \circ v} "A"; "C"}; 
{\ar^{u' \circ u} "R"; "S"}; 
(10,-18)*{{\cong}};
(10,-13)*{s^{u'u}_{v'v}};
\endxy
\end{equation*}
To see this, observe that since the surjective equivalence $\C(B,h):\C(B,E) \to \C(B,F)$ is faithful, it suffices to show that both are sent to the identity $2$-cell on $u'u$, and this follows easily from the definitions of $s^{u}_{v}$ and $s^{u'}_{v'}$.  That $K$ preserves identity $1$-cells is straightforward.

Given a $2$-cell in $RE_{c}(\C)$ as depicted below

\begin{equation*}
\xy
(0,0)*+{A}="A";(25,0)*+{B}="R";
(0,-15)*+{C}="C";(25,-15)*+{D}="S";
{\ar^{f} "A"; "R"}; 
{\ar_{g} "C"; "S"}; 
{\ar@/_1.2pc/_{u} "A"; "C"}; 
{\ar@/^1.2pc/^{u'} "A"; "C"}; 
{\ar@{=>}^{\alpha}(-3,-8)*+{};(3,-8)*+{}};
{\ar@/_1.2pc/_{v} "R"; "S"}; 
{\ar@/^1.2pc/^{v'} "R"; "S"}; 
{\ar@{=>}^{\beta}(22,-8)*+{};(28,-8)*+{}};
\endxy
\end{equation*}
the additional condition for a modification $K(\alpha,\beta)$ between the associated pseudonatural transformations is the equation below.
\begin{equation*}
\xy
(0,0)*+{B}="A";(25,0)*+{A}="R";
(0,-15)*+{D}="C";(25,-15)*+{C}="S";
{\ar^{s_f} "A"; "R"}; 
{\ar_{s_g} "C"; "S"}; 
{\ar@/_1.2pc/_{v} "A"; "C"}; 
{\ar@/^1.2pc/^{v'} "A"; "C"}; 
{\ar@{=>}^{\beta}(-3,-8)*+{};(3,-8)*+{}};
{\ar@/^1.2pc/^{u'} "R"; "S"}; 
(15,-5)*{s^{u'}_{v'}};
(15,-9)*{{\cong}};
(40,-7)*{=};
\endxy
\hspace{0.5cm}
\xy
(0,0)*+{B}="A";(25,0)*+{A}="R";
(0,-15)*+{D}="C";(25,-15)*+{C}="S";
{\ar^{s_f} "A"; "R"}; 
{\ar_{s_g} "C"; "S"}; 
{\ar@/_1.2pc/_{v} "A"; "C"}; 
{\ar@/_1.2pc/_{u} "R"; "S"}; 
{\ar@{=>}^{\alpha}(22,-8)*+{};(28,-8)*+{}};
{\ar@/^1.2pc/^{u'} "R"; "S"}; 
(10,-5)*{s^{u}_{v}};
(10,-9)*{{\cong}};
\endxy
\end{equation*}
As before, it suffices to show that both composite $2$-cells coincide on postcomposition with $g$ --- under which, they each yield the $2$-cell $\alpha$.  Functoriality of $K$ on $2$-cells is immediate.  By uniqueness of the constructions subject to the given constraints, we have a pullback as desired.
\end{proof}

\begin{Theorem}\label{thm:limits2}
Both $\Acc_{\omega,\mathcal{M}}$ and $\Lpm$ are closed in $\Twocat$ under products, pullbacks of isofibrations, and transfinite cocomposites of isofibrations.
\end{Theorem}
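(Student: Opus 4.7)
My plan is to reduce the closure properties of $\Acc_{\omega,\mathcal M}$ and $\Lpm$ to those already established for $\Acc_\omega$ and $\Lp$ in Proposition~\ref{prop:limits}, together with the pullback characterisation of Lemma~\ref{lem:pullback}. So fix a limit $L=\lim_i D_i$ in $\Twocat$ of one of the three specified shapes, with each $D_i$ in $\Acc_{\omega,\mathcal M}$ (respectively $\Lpm$). Then $L \in \Acc_\omega$ (resp.\ $\Lp$) by Proposition~\ref{prop:limits}, and the only remaining task is to verify that Property~$\mathcal M$ propagates to $L$: the inclusion $RE(L) \hookrightarrow L^{\atwo}$ must lie in $\Acc_\omega$ (resp.\ $\Lp$).

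I would exhibit this inclusion as the limit, in the arrow $2$-category $\Twocat^{\atwo}$, of the inclusions $RE(D_i) \hookrightarrow D_i^{\atwo}$. To this end, invoke Lemma~\ref{lem:pullback} and observe that each of $(-)^{\atwo}$, $Ps(\atwo,-)$ and $Ps(RE,-) : \Twocat \to \Twocat$ preserves arbitrary strict $2$-limits, since the data defining a $2$-functor out of $\atwo$ or $RE$ together with a (possibly pseudo) natural transformation between two such are built componentwise from the objects, $1$-cells and $2$-cells of $\C$. Strict pullback also commutes with limits, so the composite $2$-functor
\[ \C \;\mapsto\; \bigl( RE_c(\C) = \C^{\atwo} \times_{Ps(\atwo,\C)} Ps(RE,\C) \longrightarrow \C^{\atwo} \bigr) \colon \Twocat \to \Twocat^{\atwo} \]
preserves all $2$-limits, yielding $(RE_c(L) \to L^{\atwo}) \cong \lim_i (RE_c(D_i) \to D_i^{\atwo})$. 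The forgetful $2$-equivalence $RE_c \xrightarrow{\simeq} RE$ then transports this to an equivalence $(RE(L) \hookrightarrow L^{\atwo}) \simeq \lim_i (RE(D_i) \hookrightarrow D_i^{\atwo})$ in $\Twocat^{\atwo}$.

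For each of our three limit shapes the strict $2$-limit computed in $\Twocat$ coincides with the bilimit invoked in Proposition~\ref{prop:limits}: trivially for products, and, for pullbacks and transfinite cocomposites of isofibrations, by the standard fact already recalled in the bilimits discussion. Applying Proposition~\ref{prop:limits} componentwise therefore places both $RE(L)$ and $L^{\atwo}$ in $\Acc_\omega$ (resp.\ $\Lp$), while the inclusion between them inherits preservation of $\omega$-filtered colimits (and, in the $\Lpm$ case, of flexible limits) from its component maps, these colimits being computed componentwise through each of our three limit types. This is exactly Property~$\mathcal M$ for $L$.

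The main technical point — and the likely locus of any difficulty — is to align the strict pullback of Lemma~\ref{lem:pullback} with the bilimit-stable closure of Proposition~\ref{prop:limits}, i.e.\ to know that in our three cases the bilimit is modelled by a strict $2$-limit so that the pullback square propagates cleanly. Once that identification is in place, the rest of the argument is componentwise and essentially formal.
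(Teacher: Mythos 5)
Your argument is correct and follows essentially the same route as the paper: reduce to $\Acc_{\omega,\mathcal M}$ via Proposition~\ref{prop:limits}, then use the pullback of Lemma~\ref{lem:pullback} to show that $RE_c(-)$ carries the limit of the $D_i$ to the limit of the $RE_c(D_i)$, and apply Proposition~\ref{prop:limits} to the rows of the resulting diagram. The one step you treat informally --- that $Ps(\atwo,-)$ and $Ps(RE,-)$ preserve limits --- is precisely the paper's Lemma~\ref{lemma:size}, proved there via the Gray tensor product adjunction and the strong generator $\{D_i\}$ of $\Twocat_0$.
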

\begin{proof}
Since $\Lpm = \Acc_{\omega,\mathcal{M}} \cap \Lp$ and $\Lp$ satisfies, by Proposition~\ref{prop:limits}, the above stability properties, it suffices to prove the claim for $\Acc_{\omega,\mathcal{M}}$.

Let $D:J \to  \Acc_{\omega,\mathcal{M}}$ and $p_i:L \to Di \in \Twocat$ be the limiting cone of a diagram $D$ of one of the three given types.  Now consider the commutative diagram below in which the vertical arrows are the forgetful $2$-functors.
\begin{equation*}
\xymatrix{
RE_c(L)\ar[d]_{U_{L}} \ar[rr]^{RE_{c}(p_i)} && RE_{c}(Di) \ar[d]_{U_{Di}} \ar[rr]^{RE_{c}(D\alpha)} && RE_{c}(Dj) \ar[d]_{U_{Dj}} \\
[\atwo,L] \ar[rr]^{[\atwo,p_i]} && [\atwo,Di] \ar[rr]^{[\atwo,D\alpha]} && [\atwo,Dj]
}
\end{equation*}
We must show that the whole diagram belongs to $ \Acc_{\omega}$.  Certainly the right adjoint $[\atwo,-]:\Twocat_0 \to \Twocat_0$ preserves limits, and it easy to see that it also preserves isofibrations.  It follows that the bottom row is a limit of a diagram of the same type in $ \Acc_{\omega}$ and so belongs to $ \Acc_{\omega}$ by Proposition~\ref{prop:limits}.  By assumption each vertical comparison $U_{D_i}:RE_c(D_i) \to [\atwo,D_i]$ belongs to $ \Acc_{\omega}$.  Since these maps are fully faithful they also reflect filtered colimits.  It follows that each connecting map $RE_{c}(D\alpha):RE_{c}(Di) \to RE_{c}(Dj)$ belongs to $ \Acc_{\omega}$.  Thus the diagram $RE_c(D-):J \to \Twocat$ takes values in $ \Acc_{\omega}$.  Therefore, if we can show that $RE_c(-):\Twocat_0 \to \Twocat_0$ preserves limits and isofibrations then, as before, the top row, as well as the canonical comparison $U_L$ between limits, will belong to $ \Acc_{\omega}$.  

Again isofibrations are clearly preserved.  As for limit preservation, consider again the pullback square of Lemma~\ref{lem:pullback}.  Since the pullback square is natural in $\C$ and since pullbacks commute with all limits we see that $RE_{c}(-)$ will preserve limits if the components $(-)^{\atwo}, Ps(RE,-),Ps(\atwo,-):\Twocat_0 \to \Twocat_0$ do so.  The first of these we have already dealt with, whilst the latter two preserve limits by Lemma~\ref{lemma:size} below.

%
\end{proof}

\begin{Lemma}\label{lemma:size}
The functor $Ps(-,-):\twocat_{0}^{op} \times \Twocat_{0} \to \Twocat_{0}$ preserves limits in each variable.
\end{Lemma}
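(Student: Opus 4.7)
My plan is to derive both halves of the lemma from the closed symmetric monoidal structure on $\twocat$ given by the pseudo (Gray) tensor product $\otimes_p$, whose internal hom is precisely $Ps(-, -)$. The essential input is the classical natural bijection
$$\twocat_0(\A \otimes_p \B, \C) \cong \twocat_0(\A, Ps(\B, \C))$$
valid for $\A$, $\B$ small and $\C$ locally small, together with its symmetric twin $\twocat_0(\A \otimes_p \B, \C) \cong \twocat_0(\B, Ps(\A, \C))$ obtained from the symmetry of $\otimes_p$.

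Limit preservation in the second variable is then immediate: the functor $Ps(\B, -) : \Twocat_0 \to \Twocat_0$ is right adjoint to $- \otimes_p \B$, hence preserves every limit that exists.

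For the first variable, given a colimit $\B = \operatorname{colim}_j \B_j$ in $\twocat_0$, I would probe both sides of the claimed isomorphism $Ps(\operatorname{colim}_j \B_j, \C) \cong \lim_j Ps(\B_j, \C)$ by an arbitrary small $2$-category $\A$ and assemble the chain of natural bijections
\begin{align*}
\twocat_0(\A, Ps(\operatorname{colim}_j \B_j, \C))
&\cong \twocat_0((\operatorname{colim}_j \B_j) \otimes_p \A, \C) \\
&\cong \twocat_0(\operatorname{colim}_j(\B_j \otimes_p \A), \C) \\
&\cong \lim_j \twocat_0(\B_j \otimes_p \A, \C) \\
&\cong \lim_j \twocat_0(\A, Ps(\B_j, \C)) \\
&\cong \twocat_0(\A, \lim_j Ps(\B_j, \C)).
\end{align*}
The crucial second step uses that $- \otimes_p \A$ is itself a left adjoint (to $Ps(\A, -)$) and so preserves colimits. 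A Yoneda argument on $\Twocat_0$ --- probing by the small $2$-categories $D_0$, $D_1$, $D_2$, which jointly classify $n$-cells and hence jointly detect isomorphisms of $2$-categories --- then delivers the asserted isomorphism in $\Twocat_0$.

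The main (and really only) non-elementary ingredient is the closed monoidal structure above, which is classical (going back to Gray). Were one to prefer a self-contained route, the second variable falls out by the observation that limits in $\Twocat_0$ are computed $n$-cell-wise for $n = 0, 1, 2$ and that pseudonatural transformations and modifications are assembled from families of such $n$-cells subject to equations; in the first variable, one identifies $2$-functors, pseudonatural transformations and modifications out of $\operatorname{colim}_j \B_j$ with compatible families out of the $\B_j$ using the universal property of colimits in $\twocat_0$ --- the only subtle point being that the coherence axioms for a pseudonatural transformation on $\operatorname{colim}_j \B_j$ force it to be determined by its restrictions along the $\B_j \to \operatorname{colim}_j \B_j$.
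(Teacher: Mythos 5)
Your argument is essentially the paper's: both rest on the Gray tensor product adjunction $\twocat_{0}(\A\otimes\B,\C)\cong\twocat_{0}(\A,Ps(\B,\C))$ together with detection of limits by the strong generator $\{D_0,D_1,D_2\}$ of $\Twocat_{0}$, your chain of bijections in the first variable being exactly the paper's factorisation of $\Twocat_0(D_i,Ps(-,\C))$ as a composite of limit-preserving functors. The only points you gloss over are size bookkeeping --- the adjunction is established only for small domain, so $Ps(\B,-):\Twocat_0\to\Twocat_0$ is not literally a right adjoint (the paper instead notes that $\Twocat_0(D_i,Ps(\B,-))\cong\Twocat_0(D_i\otimes\B,-)$ is representable), and pulling $\operatorname{colim}_j$ out of the hom against a large $\C$ needs the easy fact that the inclusion $\twocat_0\hookrightarrow\Twocat_0$ preserves small colimits --- both of which are dispatched by the same generator/representability device you invoke at the end.
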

\begin{proof}
The Gray tensor product $\A \otimes \B$ of small $2$-categories $\A$ and $\B$ \cite{Gray1974Formal} has the universal property that there is a natural bijection $\twocat_{0}(\A \otimes \B,\C) \cong \twocat_{0}(\A,Ps(\B,\C))$.  For $\C$ merely locally small, one sees by factoring through small full sub $2$-categories of $\C$, that this extends to a natural bijection $\Twocat_{0}(\A \otimes \B,\C) \cong \Twocat_{0}(\A,Ps(\B,\C))$.   Since the objects $\{D_i:i=0,1,2\}$ of $\Twocat_{0}$ form a strong generator, in order to prove that $Ps(-,-):\Twocat_{0}^{op} \times \Twocat_{0} \to \Twocat_{0}$ preserves limits in each variable, it suffices to prove that each composite $\Twocat_{0}(D_i,Ps(-,-))$ does so.  By the preceding, we have a natural bijection $\Twocat_{0}(D_i,Ps(\B,\C)) \cong \Twocat_{0}(D_i \otimes \B,\C)$, so that for fixed $B$ the functor is representable (and so preserves limits), whilst for fixed $\C$ it is the composite $\Twocat_{0}(-,\C) \circ J^{op} \circ (D_i \otimes -)^{op}$.  This preserves limits as the composite of a representable, the inclusion $J^{op}:\twocat_{0}^{op} \to \Twocat_{0}^{op}$ and the right adjoint $(D_i \otimes -)^{op}$.  Note that $J:\twocat_0 \to \Twocat_0$, is easily seen to preserve small colimits by factoring through small full sub-$2$-categories of large ones.
\end{proof}

\section{Cellularity and the accessibility of pseudonatural transformations}\label{sect:core}

In the present section, which is the technical heart of the paper, we prove in Theorem~\ref{thm:cellular} that:
\begin{itemize}
\item if $\A$ is a cellular $2$-category and $\C \in \Lpm$ then $Ps(\A,\C) \in \Lpm$ with flexible limits and filtered colimits pointwise as in $\C$.
\item furthermore, if $F:\A \to \B$ is a cellular 2-functor between cellular $2$-categories then $Ps(F,\C):Ps(\B,\C) \to Ps(\A,\C)$ is an isofibration in $\Lpm$.
\end{itemize} 
Note that by \emph{pointwise} in the above, we mean that for each $x \in \A$ the $2$-functor $ev_x:Ps(\A,\C) \to \C$ preserves flexible limits and filtered colimits.  Most of the work lies in the following special case. 
\subsection{The case $\A =\atwo$}\label{sect:squares}
In this case the $2$-category of interest is $Ps(\atwo,\C)$, whose objects are morphisms and whose morphisms are pseudo-commutative squares.  The key idea is to show that $Ps(\atwo,\C)$ can be embedded into 
a functor $2$-category $[S,\C]$ of spans, from which it inherits many good properties.  The embedding is defined using \emph{pseudolimits of arrows}, which are finite pie limits.  

Given a morphism $f:A \to B$ in $\C$ its pseudolimit $Pf$ comes equipped with a ``cone" as on the left below.

$$\xy
(0,0)*+{Pf}="a0"; (-15,-15)*+{A}="b0";(15,-15)*+{B}="c0";
{\ar_{p_{f}} "a0"; "b0"}; 
{\ar^{q_{f}} "a0"; "c0"}; 
{\ar_{f} "b0"; "c0"}; 
(1,-9)*{{\cong}_{\lambda_{f}}};
\endxy
\hspace{2cm}
\xy
(0,0)*+{X}="a0"; (-15,-15)*+{A}="b0";(15,-15)*+{B}="c0";
{\ar_{r} "a0"; "b0"}; 
{\ar^{s} "a0"; "c0"}; 
{\ar_{f} "b0"; "c0"}; 
(1,-9)*{{\cong}_{\alpha}};
\endxy$$
This has the $1$-dimensional universal property that given any other cone, as on the right above, there exists a unique $t:X \to Pf$ satisfying 
\begin{equation*}
p_{f}t=r\textnormal{, }q_{f}t=s\textnormal{ and }\lambda_{f}t=\alpha \hspace{0.1cm} .
\end{equation*}

The $2$-dimensional universal property is that given a pair of cones $(r,\alpha,s)$ and $(r^{\prime},\alpha^{\prime},s^{\prime})$ with common base $X$ together with 2-cells $\theta_{r}:r \Rightarrow r^{\prime} \in \C(X,A)$ and $\theta_{s}:s \Rightarrow s^{\prime} \in \C(X,B)$ satisfying
$$\xy
(0,0)*+{fr}="00"; 
(15,0)*+{s}="10"; (0,-10)*+{fr^\prime}="01";
(15,-10)*+{s^{\prime}}="11";
{\ar@{=>}^{\alpha} "00"; "10"}; 
{\ar@{=>}^{\theta_s} "10"; "11"}; 
{\ar@{=>}_{f\theta_{r}} "00"; "01"}; 
{\ar@{=>}_{\alpha^{\prime}} "01"; "11"}; 
\endxy$$
there exists a unique 2-cell $\phi:t \Rightarrow t^{\prime} \in \C(X,Pf)$ between the induced factorisations such that 
\begin{equation*}
p_{f}\phi=\theta_{r}\textnormal{ and }q_{f}\phi=\theta_{s}\hspace{0.1cm} .
\end{equation*}

\begin{Example}\label{ex:pseudolimit}
In $\Cat$, the objects of the pseudolimit $Pf$ are triples $(a \in A,\alpha:fa \cong b,b\in B)$, whilst a morphism $(u,v):(a,\alpha,b) \to (c,\beta,d)$ consists of arrows $u:a \to c \in A$ and $v:b \to d \in B$ making the square
\begin{equation*}
\xymatrix{
fa \ar[d]_{fu} \ar[r]^{\alpha} & b \ar[d]^{v} \\
fc \ar[r]^{\beta} & d}
\end{equation*}
commute.  The projections $p_f:P_f \to A$ and $q_f:P_f \to B$ send $(a,\alpha,b)$ to $a$ and $b$ respectively, whilst the component $\lambda_f(a,\alpha,b)$ is $\alpha:fa \cong b$ itself.
\end{Example}

Observe that the commuting triangle

$$\xy
(0,0)*+{A}="a0"; (-15,-15)*+{A}="b0";(15,-15)*+{B}="c0";
{\ar_{1} "a0"; "b0"}; 
{\ar^{f} "a0"; "c0"}; 
{\ar_{f} "b0"; "c0"};
\endxy
$$
induces a unique morphism $s_f:A \to P_f$ such that 
\begin{equation*}
p_f \circ s_f = 1_A, q_f \circ s_f = f \textnormal{ and } \lambda_f \circ s_f = id_f \hspace{0.1cm}.
\end{equation*}
In fact, as is well known --- see, for instance, \cite{Blackwell1989Two-dimensional} --- $p_f$ is a surjective equivalence.

\begin{Prop}\label{prop:properties}
Consider $f:A \to B \in \C$ where $\C$ is a $2$-category with pseudolimits of arrows and products.  Then 
\begin{enumerate}
\item $p_f$ is a surjective equivalence with section $s_f$;
\item $(p_f,q_f):P_f \to A \times B$ is a discrete isofibration.
\end{enumerate}
\end{Prop}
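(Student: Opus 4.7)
The plan is to deduce both parts directly from the one- and two-dimensional universal properties of the pseudolimit $P_f$ as just spelled out. For part~(1) I will construct an invertible $2$-cell $s_f p_f \cong 1_{P_f}$; for part~(2) I will lift a given component-wise isomorphism in $A \times B$ uniquely through $(p_f,q_f)$.

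For part~(1), the identity morphism $1_{P_f}$ factors through the defining cone $(p_f,\lambda_f,q_f)$, while $s_f p_f$ factors (by the defining equations of $s_f$) through the cone $(p_f,\mathrm{id}_{fp_f},fp_f)$. To compare these via the two-dimensional universal property I take $\theta_r = \mathrm{id}_{p_f}:p_f \Rightarrow p_f$ and $\theta_s = \lambda_f:fp_f \Rightarrow q_f$; the required compatibility square collapses to $\lambda_f \circ \mathrm{id}_{fp_f} = \lambda_f \circ f\theta_r$, which is automatic. This produces a $2$-cell $\phi:s_f p_f \Rightarrow 1_{P_f}$ with $p_f\phi = \mathrm{id}_{p_f}$ and $q_f\phi = \lambda_f$. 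Running the same argument with $\theta_r^{-1}$ and $\theta_s^{-1}$ (still isomorphisms, and with the compatibility still automatic) yields a $2$-cell $\psi$ in the other direction. The uniqueness clause in the two-dimensional universal property, applied to the fact that $p_f(\psi\phi)$ and $q_f(\psi\phi)$ are identities, forces $\psi\phi = \mathrm{id}_{s_fp_f}$, and similarly $\phi\psi = \mathrm{id}_{1_{P_f}}$. Combined with the on-the-nose equation $p_f s_f = 1_A$, this shows $p_f$ to be a surjective equivalence with section $s_f$.

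For part~(2), suppose given $t:X \to P_f$ together with an invertible $2$-cell $(\beta_1,\beta_2):(p_f t,q_f t) \cong (r',s')$ in $A \times B$. To produce the lift $t'$ I need an appropriate cone $(r',\alpha',s')$, and the compatibility condition already pins down the shape of $\alpha'$: set $\alpha' = \beta_2 \cdot \lambda_f t \cdot (f\beta_1)^{-1}$, which is an isomorphism. The one-dimensional universal property then delivers a unique $t':X \to P_f$ with $p_f t' = r'$, $q_f t' = s'$ and $\lambda_f t' = \alpha'$. Next, the pair $(\beta_1,\beta_2)$ satisfies the compatibility condition for the cones associated to $t$ and $t'$ by the very definition of $\alpha'$, so the two-dimensional universal property produces a unique $\phi:t \Rightarrow t'$ with $p_f\phi = \beta_1$ and $q_f\phi = \beta_2$. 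Invertibility of $\phi$ follows by the symmetric trick from part~(1) (construct an inverse from $\beta_1^{-1},\beta_2^{-1}$ and invoke uniqueness), and the uniqueness of both $t'$ and $\phi$ comes from the uniqueness clauses of the respective universal properties.

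There is no serious obstacle here; the whole argument is formal manipulation of the universal property of $P_f$, and the products in the hypothesis are needed only so that the codomain $A \times B$ in~(2) makes sense. The one point meriting a little care in the write-up is the verification of invertibility: in each case it is cleanest to apply the same universal property to construct a candidate inverse and then let the two-dimensional uniqueness clause force the two composites to be identities.
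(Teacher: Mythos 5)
Your proof is correct, but it takes a genuinely different route from the paper. The paper's proof is a two-line reduction: since the representables $\C(X,-):\C \to \Cat$ preserve pseudolimits of arrows and products, and both preserve and \emph{reflect} surjective equivalences and discrete isofibrations, it suffices to check both claims in $\Cat$, where they are read off immediately from the explicit description of $P_f$ in Example~\ref{ex:pseudolimit} (objects are triples $(a,\alpha:fa\cong b,b)$). You instead argue directly in an arbitrary $\C$ from the one- and two-dimensional universal properties: for (1) you produce the invertible $2$-cell $s_fp_f\cong 1_{P_f}$ by comparing the cones $(p_f,\mathrm{id}_{fp_f},fp_f)$ and $(p_f,\lambda_f,q_f)$, and for (2) you observe that interchange forces $\lambda_f t' = \beta_2\cdot\lambda_f t\cdot(f\beta_1)^{-1}$ for any lift, which gives both existence and uniqueness. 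Both arguments are sound; the paper's is shorter and illustrates a reduction technique it reuses repeatedly (e.g.\ in Proposition~\ref{prop:limits} and Lemma~\ref{lem:pullback}), while yours is self-contained, avoids any appeal to the concrete model in $\Cat$, and makes visible exactly which clauses of the universal property are doing the work. The one step you might spell out slightly more is the uniqueness of $t'$ in (2): it rests on the middle-four interchange identity $q_f\phi''\cdot\lambda_f t=\lambda_f t''\cdot f(p_f\phi'')$, which is what ``pins down'' $\lambda_f t''$ for any competing lift $t''$.
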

\begin{proof}
Since the representables $\C(X,-):\C \to \Cat$ preserve pseudolimits of arrows and products, and both preserve and reflect the property of being a surjective equivalence or discrete isofibration, it suffices to verify that Properties (1) and (2) above hold when $\C = \Cat$, in which case they are evident from the description in Example~\ref{ex:pseudolimit}.
\end{proof}    



Consider the 2-category $Ps(\atwo,\C)$ of arrows and pseudocommutative squares in $\C$.  We now describe a 2-functor $P:Ps(\atwo,\C) \to [S,\C]$ where $S=\{0 \leftarrow 1 \rightarrow 2\}$ is the free span.  On objects it sends
\begin{equation*}
\xymatrix{
 & A \ar[r]^{f} & B &  \textnormal{to the span} & A & Pf\ar[l]_{p_{f}} \ar[r]^{q_{f}} & B.}
\end{equation*}  
At a morphism $(r,s,\alpha):f \to g \in Ps(\atwo,\C)$ the span map $(r,P_\alpha,s):(p_f,q_f) \to (p_g,q_g)$ is specified by the commutativity depicted below.

$$\xy
(15,15)*+{P_{f}}="P";
(0,0)*+{A}="A";(30,0)*+{B}="B";(0,-15)*+{C}="C";(30,-15)*+{D}="D";
{\ar_{f} "A"; "B"}; 
{\ar_{g} "C"; "D"}; 
{\ar_{r} "A"; "C"}; 
{\ar^{s} "B"; "D"}; 
(15,-8)*{{\cong}_{\alpha}};
{\ar_{p_f} "P"; "A"}; 
{\ar^{q_f} "P"; "B"}; 
(15,6)*{{\cong}_{\lambda_f}};
\endxy
\hspace{1cm}
=
\hspace{1cm}
\xy
(15,15)*+{P_{f}}="P";
(15,0)*+{P_{g}}="Pg";
(0,0)*+{A}="A";(30,0)*+{B}="B";(0,-15)*+{C}="C";(30,-15)*+{D}="D";
{\ar_{g} "C"; "D"}; 
{\ar_{r} "A"; "C"}; 
{\ar^{s} "B"; "D"}; 
{\ar_{p_f} "P"; "A"}; 
{\ar^{q_f} "P"; "B"}; 
{\ar|{P\alpha} "P"; "Pg"}; 
{\ar_{p_{g}} "Pg";"C"};
{\ar^{q_{g}} "Pg";"D"};
(15,-9)*{{\cong}_{\lambda_g}};
\endxy
$$
Let $(\theta_r,\theta_s):(r,s,\alpha) \Rightarrow (r^\prime,s^\prime,\alpha^\prime) \in Ps(\atwo,\C)(f,g)$ be a 2-cell.  Using the $2$-dimensional universal property of $Pg$ there exists a unique $2$-cell $P\theta:P\alpha \Rightarrow P\alpha^{\prime}$ such that $(\theta_r,P\theta,\theta_s):(r,P\alpha,s) \Rightarrow (r^{\prime},P\alpha^{\prime},s^{\prime})$ is a 2-cell in $[S,\C]$.

\begin{Theorem}\label{thm:span}
The above assignments determines a $2$-functor $P:Ps(\atwo,\C) \to [S,\C]$ which is fully faithful on $1$-cells and on $2$-cells.  Furthermore:
\begin{itemize}
\item a span $(a,b):A  \leftarrow R  \to  B$ lies in its essential image if and only if $a:R \to A$ is a surjective equivalence and $(a,b):R \to A \times B$ is a discrete isofibration.  
\end{itemize}
\end{Theorem}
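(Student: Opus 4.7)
The plan is to verify in turn that $P$ is a $2$-functor, that it is fully faithful on $1$-cells and on $2$-cells, and finally to characterise its essential image.

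Functoriality is immediate from the uniqueness clauses in the $1$- and $2$-dimensional universal properties of $Pg$ used to define $P\alpha$ and $P\theta$: to check $P\alpha' \circ P\alpha = P(\alpha' \cdot \alpha)$, for instance, both sides satisfy the same equations characterising the induced map into $Ph$, and hence agree; preservation of identities and of vertical composition of $2$-cells is analogous.

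For fullness on $1$-cells, given a span map $(r, t, s):(p_f, q_f) \to (p_g, q_g)$ in $[S, \C]$, I whisker $\lambda_g \cdot t \colon g r p_f \cong s q_f$ with the section $s_f \colon A \to P_f$ of $p_f$ (so $p_f s_f = 1_A$, $q_f s_f = f$, $\lambda_f s_f = id_f$) to produce an invertible $2$-cell $\alpha \colon g r \cong s f$. To see that $t = P\alpha$ I use the $1$-dimensional universal property of $Pg$: the equations $p_g t = r p_f$ and $q_g t = s q_f$ are given, while the remaining equation identifying $\lambda_g t$ with $(s \lambda_f) \cdot (\alpha p_f)$ reduces on postcomposition with $s_f$ to the defining equation for $\alpha$, after which the surjective equivalence $p_f$ with section $s_f$ upgrades this equality after $s_f$ to global equality. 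Faithfulness is essentially the same argument. Full faithfulness on $2$-cells is similar: given $(\theta_r, \theta_s)$, the middle component $\theta_t$ of $P(\theta_r, \theta_s)$ is the unique $2$-cell $P\alpha \Rightarrow P\alpha'$ satisfying $p_g \theta_t = \theta_r p_f$ and $q_g \theta_t = \theta_s q_f$, by the $2$-dimensional universal property of $Pg$, and every $2$-cell in $[S,\C]$ arises in this way.

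For the essential image, the forward direction is immediate from Proposition~\ref{prop:properties}. For the converse, choose a section $r \colon A \to R$ of the surjective equivalence $a$ and set $f := b r$. Using discreteness of the isofibration $(a, b)$, promote $a$ to an adjoint equivalence with identity counit $ar = 1_A$ and with unit $\eta \colon 1_R \cong r a$ satisfying $a \eta = id_a$; such $\eta$ is obtained as the unique lift along $(a, b)$ of a suitable isomorphism in $A \times B$. Then $\alpha := (b \eta)^{-1} \colon f a = b r a \cong b$ furnishes a cone over $f$ with apex $R$, which induces $t \colon R \to P_f$ with $p_f t = a$, $q_f t = b$, $\lambda_f t = \alpha$. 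To build an inverse, apply discreteness of $(a, b)$ to the invertible $2$-cell $(1_{p_f}, \lambda_f) \colon (a, b) \cdot r p_f = (p_f, f p_f) \cong (p_f, q_f)$ starting from $r p_f \colon P_f \to R$, producing $u \colon P_f \to R$ with $a u = p_f$ and $b u = q_f$. The identities $u t = 1_R$ and $t u = 1_{P_f}$ then follow, respectively, from discreteness of $(a, b)$ applied to the identity isomorphism on $(a, b)$, and from the $1$-dimensional universal property of $P_f$ applied after verifying $\lambda_f \cdot t u = \lambda_f$.

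The main obstacle lies in this last verification in the essential image argument: one must show that $t$ and $u$ are \emph{strict} mutual inverses, and the equality $\lambda_f \cdot t u = \lambda_f$ unpacks to $b \eta \cdot u = \lambda_f^{-1}$. This is forced by the careful choice of $\eta$: both $\eta u$ and the inverse of the comparison $2$-cell constructed for $u$ are isomorphisms $u \cong r p_f$ whose $a$-whiskerings are identities, and uniqueness of iso lifts along the discrete isofibration $(a, b)$ identifies them once the pinning down of $\eta$ as the adjoint-equivalence unit is invoked.
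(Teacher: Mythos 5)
Your overall strategy is viable but genuinely different from the paper's: the paper interposes a $2$-category of spans \emph{equipped with a chosen section} of $a$, exhibits $P$ as a $2$-equivalence onto it, and never produces a strict inverse to the comparison map $R \to P_{b \circ s_a}$ by hand --- instead it observes that this map is an equivalence (three-from-two over $A$, since $p_{b\circ s_a}$ and $a$ are equivalences) and a discrete isofibration (three-from-two over $A\times B$), hence an isomorphism. Your full-faithfulness arguments are fine (note that ``postcomposition with $s_f$'' should be precomposition, and the upgrade from ``equal after whiskering with $s_f$'' to ``equal'' uses that $s_f$ is an equivalence, so that $\C(s_f,D)$ is faithful).

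The gap is in the two places where you ask the discrete isofibration $(a,b):R \to A\times B$ to do work it cannot do. First, obtaining $\eta$ ``as the unique lift along $(a,b)$ of a suitable isomorphism in $A\times B$'' is circular: to have an isomorphism in $A\times B$ to lift you would already need its $B$-component $b\eta$, which is what is being constructed. The correct source of $\eta$ is full faithfulness of the surjective equivalence $a$ (exactly as in Lemma~\ref{lem:pullback}): $\C(R,a)$ is fully faithful, so there is a unique invertible $\eta:1_R \cong ra$ with $a\eta = id_a$. Second, and more seriously, in the key verification $\lambda_f \cdot tu = \lambda_f$ you identify $\eta u$ with $\mu^{-1}$ (where $\mu:rp_f \cong u$ is the comparison cell defining $u$) on the grounds that both have identity $a$-whiskering and that lifts along the discrete isofibration $(a,b)$ are unique. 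But uniqueness of lifts along $(a,b)$ requires agreement of \emph{both} the $a$- and $b$-whiskerings, and agreement of the $b$-whiskerings is precisely the equation $b\eta u = \lambda_f^{-1}$ you are trying to prove --- as stated the argument is circular. The claim is nevertheless true and the repair is short: since $a$ is an equivalence, the functor $\C(P_f,a)$ is fully faithful, so two $2$-cells $u \Rightarrow rp_f$ with the same (identity) $a$-whiskering must coincide; this yields $\eta u = \mu^{-1}$, hence $b\eta u = b\mu^{-1} = \lambda_f^{-1}$ and the required equation. (Alternatively, the paper's equivalence-plus-discrete-isofibration argument bypasses this computation entirely.)
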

\begin{proof}
$2$-functoriality of $P$ follows in a straightforward manner from the universal properties.  To prove the theorem, let us define an interposing $2$-category $[S,\C]_{f}$ as follows.  Objects of $[S,\C]_{f}$ are triples $(\mathbf{a},R,b)$ consisting of a span $(a,R,b)$ satisfying the above two properties together with a section $s_a:A \to R$ of $a$.  The morphisms and $2$-cells of $[S,\C]_{f}$ are just those of the underlying spans in $[S,\C]$, so that there is a forgetful $2$-functor $[S,\C]_{f} \to [S,\C]$ which is fully faithful on $1$-cells and $2$-cells and has in its image exactly those span satisfying Property $\mathcal M$.  By Lemma~\ref{prop:properties} $P$ factors through $[S,\C]_{f}$, as $P:Ps(\atwo,\C) \to [S,\C]_{f}$, and it remains to prove that this factored $2$-functor is a $2$-equivalence.  We will now describe its equivalence inverse $I$, which augments the construction of Lemma~\ref{lem:pullback}.

To this end, consider a span $(\mathbf a,b):R \to A \times B \in [S,\C]_{f}$.  We define $I(\mathbf a,b) = b \circ s_a:A \to B$.  At a span morphism $(u,v,w):(\mathbf a,b) \to (\mathbf c,d)$ there exists, as in \eqref{eq:se}, a unique invertible $2$-cell $s^{u}_{v}$ as in the left square below
\begin{equation*}
\xy
(0,0)*+{A}="A";(20,0)*+{R}="R";(40,0)*+{B}="B";
(0,-15)*+{D}="C";(20,-15)*+{S}="S";(40,-15)*+{D}="D";
{\ar^{s_{a}} "A"; "R"}; 
{\ar^{b} "R"; "B"}; 
{\ar_{s_{c}} "C"; "S"}; 
{\ar_{d} "S"; "D"}; 
{\ar_{u} "A"; "C"}; 
{\ar^{v} "R"; "S"}; 
{\ar^{w} "B"; "D"}; 
(10,-9)*{{\cong}};
(10,-5)*{s^{u}_{v}};
\endxy
\end{equation*}
which yields an identity under postcomposition by $c$.  We define $I(u,v,w) = (u,w,d \circ s^u_v)$ as depicted above.  At a $2$-cell $(\alpha, \beta,\gamma):(u,v,w) \to (u',v',w')$ we have $I(\alpha, \beta,\gamma)=(\alpha,\gamma)$.  
Now the assignment $$\mathbf a \mapsto s_a, (u,v) \mapsto (u,v,s^u_v), (u,v,w) \mapsto (u,w)$$ was shown to be $2$-functorial in the construction of the $2$-functor $K:RE_{c}(\C) \to Ps(RE,\C)$ of Lemma~\ref{lem:pullback} and $2$-functoriality of $I$, which combines this with postcomposition, easily follows.

%
%

It remains to show that $I$ and $P$ are inverse halves of a $2$-equivalence.  Firstly, observe that given $f:A \to B \in \C$ we have that $q_{f} \circ s_{f} = f$.  Thus $I \circ P = Id$ on objects, and it is straightforward to show that the equality holds for $1$-cells and $2$-cells as well.  It remains then to describe a $2$-natural isomorphism $P \circ I \cong Id$.  

To this end, consider $(\overline{a},R,b) \in [S,\C]_f$ and $I(\overline{a},R,b) = b \circ s_a:A \to R \to B$.  Now we have the composite invertible $2$-cell on the left below
$$\xy
(0,0)*+{R}="a0"; (-20,-15)*+{A}="b0";(0,-15)*+{R}="d0";(20,-15)*+{B}="c0";
{\ar_{a} "a0"; "b0"}; 
{\ar^{b} "a0"; "c0"}; 
{\ar^{1} "a0"; "d0"}; 
{\ar_{s_a} "b0"; "d0"}; 
{\ar_{b} "d0"; "c0"}; 
(-5,-10)*{{\cong}_{\rho_{a}}};
(60,0)*+{R}="a0"; (40,-15)*+{A}="b0";(80,-15)*+{B}="c0";(60,-15)*+{P_{b \circ s_a}}="d0";
{\ar_{a} "a0"; "b0"}; 
{\ar^{b} "a0"; "c0"}; 
{\ar^{\eta_{R}} "a0"; "d0"}; 
{\ar^{p_{b \circ s_a}} "d0"; "b0"}; 
{\ar_{q_{b \circ s_a}} "d0"; "c0"}; 
\endxy$$
which induces a unique $1$-cell $\eta_R$ giving a span map as above and whose composite with $\lambda_{b \circ s_a}$ equals $b \circ \rho_a$.  $2$-naturality of $\eta_{R}:(a,R,b) \to PI(a,R,b)$ follows from the universal property of the pseudolimit of an arrow.  Finally, we should show that $\eta_R$ is invertible.  Since $p_{b \circ s_a}$ and $a$ are equivalences with $p_{b \circ s_a} \circ \eta_R = a$ we deduce, by three from two, that $\eta_R$ is an equivalence.  Finally, we have a commutative triangle 
\begin{equation*}
\xymatrix{
& R \ar[dl]_{\eta_R} \ar[dr]^{(a,b)} \\
PI(a,R,b) \ar[rr]_-{(p_{b \circ s_{a}},q_{b \circ s_a})} && A \times B}
\end{equation*}
in which both morphisms to the product are discrete isofibrations.  It follows, by three from two, that $\eta_R$ is itself a discrete isofibration.  As it is also an equivalence, it is therefore invertible, completing the proof.
\end{proof}


\begin{Lemma}\label{lem:2from3}
Let $\C \in \Lpm$.
\begin{enumerate}
\item If $Ps(\A,\C) \in \Lp$ with flexible limits and filtered colimits pointwise in $\C$ then $Ps(\A,\C) \in \Lpm$.
\item Assuming $Ps(\A,\C)$ satisfies the above conditions and $\B \in \Lpm$ too, then $F:\B \to Ps(\A,\C) \in \Lpm$ just when each composite $ev_a \circ F:\B \to \C \in \Lpm$.
\end{enumerate}
\end{Lemma}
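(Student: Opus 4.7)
The plan is to establish (1) by verifying Property $\mathcal M$ for $Ps(\A,\C)$, since the $\Lp$ portion is given by hypothesis; that is, to show that the inclusion $RE(Ps(\A,\C)) \hookrightarrow Ps(\A,\C)^{\atwo}$ lies in $\Acc_\omega$. The crucial input is Lemma~\ref{lem:conservative}, which characterises retract equivalences in $Ps(\A,\C)$ pointwise: a morphism $f$ lies in $RE(Ps(\A,\C))$ precisely when each component $ev_a(f) \in RE(\C)$. I will use this to exhibit the inclusion of interest as a pullback
\begin{equation*}
\xymatrix{
RE(Ps(\A,\C)) \ar@{^{(}->}[d] \ar[r] & \prod_{a\in \A} RE(\C) \ar@{^{(}->}[d] \\
Ps(\A,\C)^{\atwo} \ar[r]_-{\langle ev_{a}^{\atwo}\rangle} & \prod_{a \in \A} \C^{\atwo}.
}
\end{equation*}

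I would then recognise this as a pullback of an isofibration in $\Twocat$ between objects of $\Acc_\omega$ and invoke Proposition~\ref{prop:limits}. The right vertical, being a product of full replete inclusions, is an isofibration in $\Twocat$; by Property $\mathcal M$ for $\C$ combined with closure of $\Acc_\omega$ under products, it is moreover a morphism of $\Acc_\omega$. The lower horizontal lies in $\Acc_\omega$ because each $ev_a$ preserves filtered colimits by assumption, and this passes to $ev_{a}^{\atwo}$ using that filtered colimits commute with $\atwo$-powers in objects of $\Lp$. Closure of $\Acc_\omega$ under pullbacks of isofibrations (Proposition~\ref{prop:limits}) then yields Property $\mathcal M$ for $Ps(\A,\C)$, hence membership in $\Lpm$.

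For part (2), I observe that the morphisms of $\Lpm$ are by definition morphisms of $\Lp$, namely $2$-functors preserving flexible limits and filtered colimits; and each $ev_a:Ps(\A,\C) \to \C$ is such a $2$-functor by the assumed pointwise description of these (co)limits. The only-if direction is immediate by composition of $\Lp$-morphisms. For the if direction, I would check directly, given a flexible limit or filtered colimit diagram in $\B$ with apex $L$, that the canonical comparison $\phi$ in $Ps(\A,\C)$ is invertible by showing each $ev_a(\phi)$ is: since $ev_a$ preserves the (co)limit in question, $ev_a(\phi)$ coincides with the canonical comparison for $ev_a \circ F$, invertible by assumption; and invertibility of morphisms in $Ps(\A,\C)$ is pointwise by Lemma~\ref{lem:conservative}.

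The main technical step is the pullback identification underpinning (1); the remainder is formal bookkeeping using the limit-stability properties of Proposition~\ref{prop:limits} together with the pointwise nature of retract equivalences afforded by Lemma~\ref{lem:conservative}.
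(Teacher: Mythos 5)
Your proof is correct and follows essentially the same route as the paper: part (1) is established via the same pullback square (the paper writes the right-hand column as $RE(\C)^{ob\A} \to [\atwo,\C]^{ob\A}$ with bottom leg $[\atwo,Res_{\A}]$, which is exactly your product of evaluations), using Lemma~\ref{lem:conservative} to identify retract equivalences in $Ps(\A,\C)$ pointwise and Proposition~\ref{prop:limits} for closure under pullbacks of isofibrations; and part (2) is the same reflection argument via the jointly conservative evaluations. The only cosmetic difference is that you verify Property $\mathcal M$ in $\Acc_\omega$ directly, while the paper shows the inclusion $RE(Ps(\A,\C)) \hookrightarrow Ps(\A,\C)^{\atwo}$ lies in $\Lp$, which amounts to the same thing here.
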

\begin{proof}
Let $Res_{\A}:Ps(\A,\C) \to Ps(obA,\C) = \C^{ob\A}$ denote the forgetful $2$-functor obtained by restriction.  Then by Lemma~\ref{lem:conservative}, we have a pullback square
\begin{equation}\label{eq:pwise}
\xymatrix{Re(Ps(\A,\C)) \ar[d]_{} \ar[rr]^-{Re(Res_{\A})} && RE([ob\A,\C]) \cong RE(\C)^{ob\A} \ar[d]_{} \\
[\atwo,Ps(\A,C)] \ar[rr]^-{[\atwo,Res_{\A}]} && [\atwo,Ps(ob\A,\C)] \cong [\atwo,\C]^{obA} }
\end{equation}
whose right vertical leg is, furthermore, an isofibration.  We must prove that the left vertical leg belongs to $\Lp$.  The right vertical leg does so, since $\Lp$ is closed under powers by small categories.  The bottom leg above belongs to $\Lp$ using our assumption on $\C$ and that $\Lp$ has powers by small categories.  Both of these legs are isofibrations.  Therefore, by Proposition~\ref{prop:limits}, the pullback and pullback projections belong to $\Lp$ proving (1).

For (2), since the evaluation $2$-functors $ev_a$ jointly reflect isomorphisms by Lemma~\ref{lem:conservative} and preserve filtered colimits and flexible limits by assumption, they also reflect them.  The claim then follows immediately.
\end{proof}

\begin{Prop}\label{prop:acc2}
If $\C \in \Lpm$ then $Ps(\atwo,\C)\in \Lpm$ with flexible limits and filtered colimits pointwise as in $\C$.
\end{Prop}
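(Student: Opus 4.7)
By Lemma~\ref{lem:2from3}(1), it suffices to show that $Ps(\atwo,\C) \in \Lp$ with flexible limits and filtered colimits pointwise in $\C$. The plan is to transport the problem along the $2$-equivalence $P: Ps(\atwo,\C) \to [S,\C]_f$ supplied by Theorem~\ref{thm:span}. Forgetting the cleavage data (which plays no role in $1$- and $2$-cells), $[S,\C]_f$ is $2$-equivalent to the full sub-$2$-category $\I \subseteq [S,\C]$ of spans $(a,b): A \leftarrow R \to B$ for which $a$ is a surjective equivalence and $(a,b): R \to A \times B$ is a discrete isofibration. Since $S$ is locally discrete, $[S,\C]$ has flexible limits and filtered colimits pointwise in $\C$, inherits the exactness axiom of Definition~\ref{defn:lp}, and is accessible.

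The remaining task is then to verify that $\I \hookrightarrow [S,\C]$ is accessibly embedded and closed under pointwise flexible limits and filtered colimits; from this $\I \in \Lp$ and hence $Ps(\atwo,\C) \in \Lp$ with the required pointwise property, since $ev_0$ and $ev_1$ correspond under $P$ to reading off the two outer vertices of the span, which are computed pointwise in $[S,\C]$. The surjective equivalence condition on $a$ is dispatched immediately by Property $\mathcal M$: the lemma just above Example~\ref{ex:1} says that $RE(\C) \hookrightarrow \C^{\atwo}$ belongs to $\Lp$, which is exactly the closure and accessible embedding required for this component.

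The main technical obstacle is the discrete isofibration condition on $(a,b): R \to A \times B$. The approach would be representable: $f$ is a discrete isofibration in $\C$ iff each $\C(X,f)$ is one in $\Cat$, equivalently iff a certain square involving powers by the free isomorphism $I_1$ (see Section~\ref{sect:catCof}) and endpoint maps is a strict pullback. Closure under flexible limits in $\C^{\atwo}$ is then automatic, since powers and pullbacks commute with flexible limits. Closure under filtered colimits reduces, representably, to the classical fact that filtered colimits of discrete isofibrations in $\Cat$ are again such, using that $I_1$ is finitely presentable and that powers by $I_1$ are finite flexible limits, hence commute with filtered colimits in any $\C \in \Lp$. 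Accessibility of the subcategory of discrete isofibrations in $\C^{\atwo}$ follows by standard orthogonality arguments in the spirit of Proposition 3.3 of \cite{Rosicky2007On}.

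Assembling these closure and accessibility statements yields $\I \in \Lp$ with the required pointwise property, whereupon Lemma~\ref{lem:2from3}(1) upgrades this to $Ps(\atwo,\C) \in \Lpm$.
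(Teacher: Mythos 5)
Your proposal follows essentially the same route as the paper's proof: both pass through the $2$-equivalence $Ps(\atwo,\C) \simeq [S,\C]_f$ of Theorem~\ref{thm:span}, dispatch the retract-equivalence leg of the span via Property $\mathcal M$, and handle the discrete-isofibration leg by observing that $f$ is a discrete isofibration precisely when a comparison map built from finite flexible limits (powers by the free isomorphism and the pseudolimit of the arrow) is invertible. The one step that does not go through as written is your appeal to ``standard orthogonality arguments in the spirit of Proposition 3.3 of \cite{Rosicky2007On}'' for the accessibility of the discrete isofibrations in $\C^{\atwo}$: that result concerns $RLP(I)$ for a \emph{set} $I$ of maps between presentable objects in a \emph{locally presentable} category, whereas here $\C$ is merely accessible and the discrete isofibrations of $\C^{\atwo}$ are not presented by any such set, so the citation does not apply. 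The repair is already implicit in your own setup: the full sub-$2$-category of discrete isofibrations is the strict pullback of the inclusion $Iso(\C) \hookrightarrow \C^{\atwo}$ --- which is $2$-equivalent to $\C \to \C^{\atwo}$, $X \mapsto 1_X$, hence lies in $\Lp$, and is an isofibration --- along the $2$-functor $R:\C^{\atwo} \to \C^{\atwo}$, $f \mapsto (Rf:A^{I} \to Pf)$, which lies in $\Lp$ because it is built from finite flexible limits. Proposition~\ref{prop:limits} (closure of $\Lp$ under pullbacks of isofibrations) then yields accessibility, closure under filtered colimits and flexible limits in one stroke; this is exactly how the paper packages the argument, forming a single pullback of $RE(\C) \times Iso(\C) \to \C^{\atwo} \times \C^{\atwo}$ that combines your two conditions at once.
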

\begin{proof}
Let $[S,\C]_f \subseteq [S,\C]$ denote the full sub-2-category consisting of those spans satisfying the conditions of Theorem~\ref{thm:span}.  To capture the discrete isofibration condition, we will use that $f:A \to B$ in a 2-category $\C$ is a discrete isofibration if and only if the induced map $Rf:A^{I} \to Pf$ is invertible.  Since powers by the free isomorphism $I$ and pseudolimits of arrows are both finite flexible limits the corresponding $2$-functor $R:\C^{\atwo} \to \C^{\atwo}$ belongs to $\Lp$.  Let $B:\C^{\atwo} \to \C^{\atwo} \times \C^{\atwo}$ be the $2$-functor sending a span $X \leftarrow A \rightarrow Y$ to the pair $(X \leftarrow A, R(A \to X \times Y))$.  Then we have 
a pullback square as on the right below 
\begin{equation*}
\xymatrix{Ps(\atwo,\C) \ar[dr]_{P} \ar[r]^{\simeq} & [S,\C]_f \ar[d]_{} \ar[r]^{} & RE(\C) \times Iso(\C) \ar[d]^{} \\
& [S,\C] \ar[r]^{B} & \C^{\atwo} \times \C^{\atwo}}
\end{equation*}
where the right vertical morphism is the product of the inclusions.  The $2$-category $[S,\C]$ belongs to $\Lp$ since $\Lp$ admits powers by small categories.  Since finite products are finite flexible limits and $R$ belongs to $\Lp$ so does the bottom leg $B$.

Now the $2$-functor $\C \to \C^{\atwo}$ sending an object to the corresponding identity factors through the full sub-$2$-category of isomorphisms $Iso(\C) \to C^{\atwo}$ via the $2$-equivalence $\C \hookrightarrow Iso(\C)$  --- since $\C \to \C^{\atwo} \in \Lp$ therefore $Iso(\C) \to C^{\atwo} \in \Lp$ too.  By assumption $RE(\C) \to \C^{\atwo} \in \Lp$.  Thus the right vertical leg belongs to $\Lp$ and it is also an isofibration.  Therefore, by Proposition~\ref{prop:limits}, the entire pullback lives in $\Lp$.  

Composing with the $2$-equivalence $Ps(\atwo,\C) \simeq [S,\C]_{f}$ of Theorem~\ref{thm:span}, it follows that the diagonal composite $P:Ps(\atwo,\C) \to [S,\C]$ belongs to $\Lp$.  Postcomposing this by the $2$-functors $[S,\C] \to \C \in \Lp$ selecting the source and target of a span establishes the pointwise nature of flexible limits and filtered colimits in $Ps(\atwo,\C)$.  Then by Lemma~\ref{lem:2from3} $Ps(\atwo,\C) \in \Lpm$.
\end{proof}

\subsection{The more general case $\atwo_{\A}$.}

We now generalise the above slightly, in a manner that will be sufficient to capture the sources and targets of the generating cofibrations $\{J_i:P_{i-1} \to D_i:i = 1,2,3\}$.  


\begin{Def}
Let $\A$ be a category.  Let $\atwo_{\A}$ be the $2$-category with objects $0,1$ and homs $\atwo_{\A}(0,0)=\atwo_{\A}(1,1)=1$, $\atwo_{\A}(0,1)=A$ and $\atwo_{\A}(1,0)=\emptyset$.
\end{Def}

Then $\atwo = \atwo_{1}$, whilst $2 = \atwo_{\emptyset}$.  A $2$-functor $(X,Y,\theta):\atwo_{\A} \to \C$ is specified by a pair of objects $X,Y$ of $\C$ and a functor $\theta:\A \to \C(X,Y)$; in these terms a morphism $(X,Y,\theta) \to (X^{'},Y^{'},\theta^{'})$ of $Ps(\atwo_{\A},\C)$ is specified by arrows $\alpha_X:X \to X'$ and $\alpha_Y:Y \to Y'$ together with a natural isomorphism as below.
\begin{equation}\label{eq:local}
\begin{xy}
(0,0)*+{\A}="00";(30,0)*+{\C(X,Y)}="10";(0,-15)*+{\C(X',Y')}="01";(30,-15)*+{\C(X,Y')}="11";
{\ar^{\theta} "00"; "10"};{\ar_{\theta^{'}} "00"; "01"};{\ar^{\C(X,\alpha_Y)} "10"; "11"};{\ar_{\C(\alpha_X,Y)} "01"; "11"};
(15,-8)*+{\cong^{\alpha}};
\end{xy}
\end{equation}

Now let $\C$ be a $2$-category with powers by $\A$.  In this case an object $(X,Y,\theta):\atwo_{\A} \to \C$, as specified by a functor $\theta:\A \to \C(X,Y)$, equally corresponds a map $X\to (Y)^\A$.   In this way, we see that we have a pullback square
\begin{equation}\label{eq:power}
\xymatrix{Ps(\atwo_{\A},\C) \ar[d]_{U} \ar[r]^{P} & Ps(\atwo,\C) \ar[d]^{U} \\
\C^{2} \ar[r]^{(1,(-)^{\A})} & \C^{2}}
\end{equation}
in which the top horizontal leg sends $\theta:\A \to \C(X,Y)$ to the corresponding map $X \to Y^{\A}$.

\begin{Prop}\label{prop:basic}
Let $\C \in \Lp_{\mathcal M}$ and $\A$ be a finitely presentable category.
\begin{enumerate}
\item Then $Ps(\atwo_{\A},\C) \in \Lpm$ with flexible limits and filtered colimits pointwise as in $\C$.
\item Given an injective on objects functor between finitely presentable categories $F:\A \to \B$ the 2-functor $Ps(\atwo_{F},\C):Ps(\atwo_{\B},\C) \to Ps(\atwo_{\A},\C)$ is an isofibration in $\Lpm$.
\end{enumerate}
\end{Prop}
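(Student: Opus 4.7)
The plan is to derive both claims from the pullback square \eqref{eq:power} together with the stability results of Theorem~\ref{thm:limits2}.

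For (1), I would verify that \eqref{eq:power} exhibits $Ps(\atwo_{\A},\C)$ as a pullback in $\Twocat$ of an isofibration between objects of $\Lpm$. Specifically: $\C^2\in\Lpm$ by closure under finite products, and $Ps(\atwo,\C)\in\Lpm$ by Proposition~\ref{prop:acc2}; since $\A$ is finitely presentable the power $(-)^\A:\C\to\C$ is a finite flexible limit, so it preserves flexible limits and, because finite flexible limits commute with filtered colimits in $\C\in\Lpm$, also filtered colimits --- thus the bottom edge $(1,(-)^\A)$ lies in $\Lp$; and the right edge $U=(ev_0,ev_1):Ps(\atwo,\C)\to\C^2$ lies in $\Lp$ by the pointwise statement of Proposition~\ref{prop:acc2} and is an isofibration, since given $f:X\to Y$ and pointwise isomorphisms $\alpha,\beta$ one may take $f':=\beta f\alpha^{-1}$ together with the identity $2$-cell, which is an isomorphism in $Ps(\atwo,\C)$ by Lemma~\ref{lem:conservative}. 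Applying Theorem~\ref{thm:limits2} then yields that both $Ps(\atwo_\A,\C)$ and its projections belong to $\Lpm$. Composing the left projection with the product projections $\C^2\to\C$ recovers the two evaluation $2$-functors on $Ps(\atwo_\A,\C)$, which gives the pointwise property.

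For (2), note first that $Ps(\atwo_{F},\C)$ fixes the objects $0,1$, so $ev_i\circ Ps(\atwo_F,\C)=ev_i$ for $i=0,1$. Since each $ev_i:Ps(\atwo_\B,\C)\to\C$ is in $\Lpm$ by part (1) applied to $\B$, Lemma~\ref{lem:2from3}(2) delivers $Ps(\atwo_F,\C)\in\Lpm$. For the isofibration property in $\Twocat$, my plan is: given $(X,Y,\theta)\in Ps(\atwo_{\B},\C)$ and an isomorphism $(\alpha_X,\alpha_Y,\beta):(X,Y,\theta\circ F)\cong(X',Y',\theta'')$ in $Ps(\atwo_{\A},\C)$---with $\alpha_X,\alpha_Y$ isomorphisms in $\C$ by Lemma~\ref{lem:conservative}---form the auxiliary functor $\tilde\theta=\C(\alpha_X^{-1},\alpha_Y)\circ\theta:\B\to\C(X',Y')$, and translate $\beta$ into a natural isomorphism $\tilde\beta:\tilde\theta\circ F\cong\theta''$ by cancelling $\alpha_X$. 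Injectivity of $F$ on objects then allows me to define a functor $\theta^*:\B\to\C(X',Y')$ that strictly agrees with $\theta''$ on the image of $F$: set $\theta^*(Fa)=\theta''(a)$ on the image and $\theta^*(b)=\tilde\theta(b)$ elsewhere, and transport the action of $\tilde\theta$ on morphisms through the component isomorphisms of $\tilde\beta$ (using identities outside the image). Whiskering the resulting natural isomorphism $\gamma:\tilde\theta\cong\theta^*$ by $\alpha_X$ produces a natural isomorphism $\beta'$ of the shape \eqref{eq:local} whose restriction along $F$ recovers $\beta$ on the nose, yielding the lift $(\alpha_X,\alpha_Y,\beta'):(X,Y,\theta)\cong(X',Y',\theta^*)$.

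The first part is a tidy assembly of existing results, so the main obstacle I anticipate lies in the bookkeeping of (2): verifying that the conjugated action on morphisms of $\B$ is functorial, that $\gamma$ is natural, and that $\beta'$ obeys the naturality axiom required to be a morphism in $Ps(\atwo_{\B},\C)$. All of these are routine once one commits to transporting across the component isomorphisms of $\tilde\beta$, but the step crucially depends on injectivity of $F$ on objects to ensure that the case-split definition of $\theta^*$ is unambiguous.
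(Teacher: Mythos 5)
Your proof is correct and follows essentially the same route as the paper: part (1) is obtained from the pullback square \eqref{eq:power} together with Proposition~\ref{prop:acc2} and Theorem~\ref{thm:limits2}, and part (2) combines Lemma~\ref{lem:2from3} with a lift of the isomorphism along $F$ using injectivity on objects. The only divergence is that where the paper simply cites Theorem 2 of \cite{Joyal1993Pullbacks} to see that $\Cat(F,\C(X,Y'))$ is an isofibration, you reconstruct that fact by hand via the case-split definition of $\theta^*$ and transport along the components of $\tilde\beta$ --- which is exactly the standard proof of the cited result, so nothing is lost.
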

\begin{proof}
In the pullback square \eqref{eq:power} the right vertical map belongs to $\Lpm$ by Proposition~\ref{prop:acc2} and is an isofibration.  The bottom leg belongs to $\Lpm$ since powers by a finitely presentable category are finite flexible limits.  Therefore the whose diagram belongs to $\Lpm$ by Proposition~\ref{prop:limits}.  

For the second part, we use the first part and  Lemma~\ref{lem:2from3} to conclude that $Ps(F,\C)$ belongs to $\Lpm$.  To see that it is an isofibration consider a morphism $(X,Y,\theta) \to (X^{'},Y^{'},\theta^{'})$ of $Ps(\atwo_{\A},\C)$ as in \eqref{eq:local}.  By Lemma~\ref{lem:conservative}, this is invertible in $Ps(\atwo_{\A},\C)$ just when when $\alpha_X$ and $\alpha_Y$ are invertible. 
Thus an isomorphism $F^*(X,Y,\theta) \cong (X^{'},Y^{'},\theta^{'})$ is given by a diagram as on the left below. 
\begin{equation*}
\begin{xy}
(-15,0)*+{\A}="-10";(0,0)*+{\B}="00";(30,0)*+{\C(X,Y)}="10";(0,-15)*+{\C(X',Y')}="01";(30,-15)*+{\C(X,Y')}="11";
{\ar^{\theta} "00"; "10"};{\ar^{\C(X,\alpha_Y)} "10"; "11"};{\ar_{\C(\alpha_X,Y)} "01"; "11"};
(10,-8)*+{\cong^{\alpha}};
{\ar^{F} "-10"; "00"};{\ar_{\theta'} "-10"; "01"};
\end{xy}
\hspace{1cm}
\begin{xy}
(-15,0)*+{\A}="-10";(0,0)*+{\B}="00";(30,0)*+{\C(X,Y)}="10";(0,-15)*+{\C(X',Y')}="01";(30,-15)*+{\C(X,Y')}="11";
{\ar^{\theta} "00"; "10"};{\ar^{\C(X,\alpha_Y)} "10"; "11"};{\ar_{\C(\alpha_X,Y)} "01"; "11"};
(15,-8)*+{\cong^{\alpha'}};
{\ar^{F} "-10"; "00"};{\ar_{\theta'} "-10"; "01"};{\ar^{\theta''} "00"; "01"};
\end{xy}
\end{equation*}
with $\alpha_X$ and $\alpha_Y$ invertible.  Since $f$ is injective on objects $\Cat(F,\C(X,Y')):\Cat(\B,\C(X,Y')) \to \Cat(\A,\C(X,Y'))$ is an isofibration by Theorem 2 of \cite{Joyal1993Pullbacks}; thus we can lift $\alpha$ obtain a diagram as on the right above, in which $\alpha^{'} \circ F = \alpha$, proving the claim.
\end{proof}



\subsection{The general case of a cellular 2-category $\A$.}
Finally, we are in a position to prove the main result of this section.

\begin{Theorem}\label{thm:cellular}
Let $\C \in \Lpm$.  
\begin{enumerate}
\item If $\A$ is cellular then $Ps(\A,\C) \in \Lpm$ with flexible limits and filtered colimits pointwise in $\C$.
\item Furthermore, if $F:\A \to \B$ is a cellular 2-functor between cellular $2$-categories then $Ps(F,\C):Ps(\B,\C) \to Ps(\A,\C)$ is an isofibration in $\Lpm$.
\end{enumerate}
\end{Theorem}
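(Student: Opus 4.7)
The plan is to argue by induction along the cellular filtration of $\A$, using Proposition~\ref{prop:basic} as the base case and the closure properties of $\Lpm$ under products, pullbacks of isofibrations, and transfinite cocomposites of isofibrations (Theorem~\ref{thm:limits2}) to propagate the conclusion.

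For part (1), I would fix the four-step decomposition $\varnothing = \A_0 \to \A_1 \to \A_2 \to \A_3 \to \A_4 = \A$ recalled in Section~\ref{sect:cellular2}, in which $\A_i \to \A_{i+1}$ is a pushout of a copower of $J_i: P_{i-1} \to D_i$. The inductive hypothesis at stage $i$ is that $Ps(\A_i, \C) \in \Lpm$ with flexible limits and filtered colimits pointwise in $\C$. The base case $Ps(\varnothing, \C)$ is the terminal $2$-category and is trivial. At the inductive step, Lemma~\ref{lemma:size} converts the defining pushout into a pullback
\begin{equation*}
\xymatrix{
Ps(\A_{i+1}, \C) \ar[d] \ar[r] & Ps(\A_i, \C) \ar[d] \\
\prod_\alpha Ps(D_i, \C) \ar[r] & \prod_\alpha Ps(P_{i-1}, \C)
}
\end{equation*}
in $\Twocat$. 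For $i = 0$, the right vertical map is the unique map to the terminal $2$-category, and $Ps(\A_1, \C) \cong \prod_\alpha \C$ lies in $\Lpm$ by closure under products. For $i \in \{1, 2, 3\}$, each $P_{i-1}$ and $D_i$ has the shape $\atwo_{\mathcal K}$ of the definition preceding Proposition~\ref{prop:basic}, for a suitable finitely presentable category $\mathcal K$, and $J_i = \atwo_F$ arises from an injective-on-objects functor $F$; thus Proposition~\ref{prop:basic}(2) gives that each $Ps(D_i, \C) \to Ps(P_{i-1}, \C)$ is an isofibration in $\Lpm$, and stability under products (Theorem~\ref{thm:limits2}) lifts this to the product. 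The inductive hypothesis then puts the top horizontal map in $\Lpm$, so Theorem~\ref{thm:limits2} closes the pullback in $\Lpm$.

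The pointwise property at stage $i+1$ propagates via Lemma~\ref{lem:2from3}(2): for $i \geq 1$ the pushout adds no new objects, so every evaluation $ev_x$ on $Ps(\A_{i+1}, \C)$ factors through the projection to $Ps(\A_i, \C)$ covered by the hypothesis; for $i = 0$ the $2$-category $\A_1$ is discrete and the evaluations on $Ps(\A_1, \C) \cong \prod_\alpha \C$ are product projections.

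For part (2), I would present a cellular $2$-functor $F: \A \to \B$ as a countable composite $\A = \B_0 \to \B_1 \to \cdots \to \B_\omega = \B$ of pushouts of coproducts of generating cofibrations. Applying $Ps(-, \C)$ and invoking Lemma~\ref{lemma:size} once more, $Ps(F, \C)$ is exhibited as the transfinite cocomposite of the connecting maps $Ps(\B_{j+1}, \C) \to Ps(\B_j, \C)$, each of which is an isofibration in $\Lpm$ by the same pullback argument used in (1). Closure of $\Lpm$ under transfinite cocomposites of isofibrations (Theorem~\ref{thm:limits2}) then yields the conclusion. The most delicate point throughout is to maintain the \emph{pointwise} control of flexible limits and filtered colimits at each stage, since Lemma~\ref{lem:2from3}(2) relies on this to reduce preservation checks on $Ps(-, \C)$ to those already supplied by Proposition~\ref{prop:basic}, and thereby allows the induction to close.
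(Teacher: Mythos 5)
Your proposal is correct and follows essentially the same route as the paper: the same four-stage cellular filtration, the same pullback squares obtained via Lemma~\ref{lemma:size}, Proposition~\ref{prop:basic} for the generating cofibrations, Theorem~\ref{thm:limits2} for the closure properties, and Lemma~\ref{lem:2from3}(2) together with the pointwise condition to drive the induction. The only slip is that in the inductive step it is the right-hand leg $Ps(\A_i,\C)\to\prod_\alpha Ps(P_{i-1},\C)$ of the cospan (not the top horizontal pullback projection) that the inductive hypothesis and Lemma~\ref{lem:2from3}(2) must place in $\Lpm$ before Theorem~\ref{thm:limits2} can be applied, but your closing paragraph shows you have this point in hand.
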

\begin{proof}
Firstly, we show that 
\begin{itemize}
\item
For each generating cofibration $J_i:P_{i-1} \to D_i$ the $2$-functor $Ps(J_i,\C):Ps(D_i,\C) \to Ps(P_{i-1},\C)$ is an isofibration in $\Lpm$, whose source and target have flexible limits and filtered colimits pointwise as in $\C$.
\end{itemize}
To this end, observe that the case of $J_0$ is trivial since the resulting restriction $2$-functor is $\C \to 1$, whilst the other cases are special cases of Proposition~\ref{prop:basic}.  
 
Now the cellular 2-category $\A$ fits into a diagram $$\varnothing = \A_{0} \to \A_1 \to \A_2 \to \A_3 \to \A_ 4 = \A$$ in which each $\A_i \to \A_{i+1}$ is a pushout of a copower of $J_i$.  Certainly $Ps(\A_{0},\C) = 1 \in \Lpm$ and trivially satisfies the pointwise condition.  Therefore, it suffices to prove that
\begin{itemize}
\item if Theorem~\ref{thm:cellular} holds for $\A_i$ then it holds also for $\A_{i+1}$.
\end{itemize}

To this end, consider the defining pushout below
\begin{equation}\label{eq:pushout}
\xymatrix{X.P_{i-1} \ar[d] \ar[r]^{X.J_i} & X.D_i \ar[d] \\
\A_i \ar[r] & \A_{i+1}}
\hspace{1cm}
\xymatrix{Ps(\A_{i+1},\C) \ar[d] \ar[rr] && Ps(\A_{n},\C) \ar[d] \\
Ps(D_i,\C)^X \ar[rr]_{Ps(J_{i},\C)^X} && Ps(P_{i-1},\C)^X}
\end{equation}
The bottom leg is a product of isofibrations in $\Lpm$ and, since $\Lpm$ is closed under products, an isofibration in $\Lpm$.  To show that the right leg belongs to $\Lpm$ is equally to show that its composite $Ps(\A_i,\C) \to Ps(P_{i-1},\C)$ with each product projection belongs to $\Lp_{\mathcal M}$, and this latter claim holds by Lemma~\ref{lem:2from3} applied to $P_{i-1} \to X.P_{i-1} \to \A_i$ where $P_{i-1} \to X.P_{i-1}$ is the appropriate injection to the copower.  Therefore by Theorem~\ref{thm:limits2} the pullback square belongs to $\Lpm$.  Furthermore, as the pullback of an isofibration, the $2$-functor $Ps(\A_{i+1},\C) \to Ps(\A_i,\C)$ is an isofibration in $\Lpm$.

For the pointwise condition, we must show that $ev_{x}:Ps(\A_{i+1},\C) \to \C \in \Lpm$ for each $x \in \A_{i+1}$.  Since the pushout projections in \eqref{eq:pushout} are jointly surjective on objects there are two possibilities; (1) that $x$ is the image of $y \in \A_i \to \A_{i+1}$, in which case $ev_x$ factors as the composite of $Ps(\A_{i+1},\C) \to Ps(\A_{i},\C) \in \Lpm$ and $ev_y:Ps(\A_i,\C) \to \C \in \Lp_{\mathcal M}$.  Or (2) that $x$ is the image of $y \in X.D_{i}\to \A_{i+1}$, in which case $ev_x$ factors as the composite of  $Ps(\A_{i+1},\C) \to Ps(X.D_{i},\C) \in \Lpm$ and $ev_y:Ps(X.D_{i},\C) \to \C$.  The latter map belongs to $\Lpm$ since we have an isomorphisms $Ps(X.D_{i},\C) \cong Ps(D_{i},\C)^{X}$ whilst filtered colimits and flexible limits are pointwise in $Ps(D_{i},\C)$ and so in the product $Ps(D_{i},\C)^{X}$.

This completes the proof of Part 1 of the theorem.  For the second part, Lemma~\ref{lem:2from3} ensures that $Ps(F,\C)$ belongs to $\Lpm$.  Furthermore, it is a countable cocomposite of powers of pullbacks of the maps $Ps(J_i,\C)$.  As established above, each such map is an isofibration and since the isofibrations can be characterised by having a right lifting property, they are stable under the above limit constructions --- hence $Ps(F,\C)$ is an isofibration.

\end{proof}


\section{A variety of examples and non-examples}\label{sect:examples}
In the present section we illustrate the power of Theorem~\ref{thm:cellular} by using it to show that a variety of examples of categories equipped with coherent structure and their pseudomorphisms form accessible $2$-categories --- indeed, that they belong to $\Lpm$.

\subsection{Categories with coherent structure, and pseudomorphisms}\label{sect:monadic1}
For simplicity, let us give the example of semi-monoidal categories.  This example is easily adapted to capture structures such as monoidal categories, symmetric monoidal categories and so on, and we will describe general results of this nature in Section~\ref{sect:2monad}.

A semi-monoidal category $X$ comes equipped with a bifunctor $\otimes:X^2 \to X:(a,b) \mapsto ab$ and a natural transformation $\alpha_{a,b,c}:(ab)c \to a(bc)$ satisfying Maclane's pentagon equation.  A pseudomorphism between semimonoidal categories is a functor $f:X \to Y$ together with a natural isomorphism $f_{a,b}:f(ab) \cong (fa)(fb)$ satisfying the same compatibility with the associator as in the notion of a strong monoidal functor.  There is also the evident notion of 2-cell.  

We form the 2-category of these structures in three steps.  Firstly, we form the pullback below.

\begin{equation*}
\xymatrix{\Alg_{1} \ar[d]^{U_{1}} \ar[rr] && Ps(D_1,\Cat) \ar[d]^{Ps(J_{1},\Cat)} \\
\Cat \ar[rr]_-{R_{1}:X \mapsto (X^2,X)} && Ps(P_0,\Cat)}
\hspace{1.5cm}
\begin{xy}
(0,0)*+{X^2}="00";(20,0)*+{Y^2}="10";(0,-15)*+{X}="01";(20,-15)*+{Y}="11";
{\ar^{f^2} "00"; "10"};{\ar_{m_X} "00"; "01"};{\ar^{m_Y} "10"; "11"};{\ar_{f} "01"; "11"};
(10,-8)*+{\cong^{\overline{f}}};
\end{xy}
\end{equation*}
The objects of the pullback $\Alg_1$ are categories equipped with a bifunctor; the morphisms are functors together with a natural isomorphism as depicted above. 
The right leg is an isofibration of $\Lp_{\mathcal M}$ by Theorem~\ref{thm:cellular}.  Since finite products are finite flexible limits the bottom leg belongs to $\Lp_{\mathcal M}$.  Therefore by Theorem~\ref{thm:limits2} the whole pullback diagram belongs to $\Lp_{\mathcal M}$.  

We now add the associator by forming the pullback below
\begin{equation*}
\xymatrix{\Alg_{2} \ar[d] \ar[rr] && Ps(I_2,\Cat) \ar[d]^{Ps(I_1,\Cat)} \\
\Alg_1 \ar[rr]_{R_2} && Ps(P_1,\Cat)}
\end{equation*}
in which $I_1:P_2 \to I_2$ is the inclusion of the boundary of the free invertible $2$-cell and the bottom leg $R_2$ sends $(X,\otimes)$ to $((\otimes \circ (\otimes,1), \otimes \circ (1,\otimes)):X^3 \rightrightarrows X \in Ps(P_1,\Cat)$.  In the pullback, we have categories equipped with a bifunctor and associator isomorphisms $\alpha_{a,b,c}:(ab)c \cong a(bc)$; the morphisms now satisfy the desired commutativity with the associators.  

Again, by Theorem~\ref{thm:cellular}, the right leg is an isofibration of $\Lp_{\mathcal M}$.  To check that $R_2 \in \Lpm$ it is enough, by Lemma~\ref{lem:2from3}(2), to check that it belongs to $\Lpm$ componentwise.  Its first component  $(-)^{3} \circ U_1:\Alg_1 \to \Cat:(X,\otimes) \mapsto X^3$ is a composite of two morphisms in $\Lpm$ whilst its second is just $U_1$.  Therefore $R_2 \in \Lpm$ so that by Theorem~\ref{thm:limits2} pullback belongs to $\Lp_{\mathcal M}$. 

Finally we consider the pullback square

\begin{equation*}
\xymatrix{\Alg_{3} \ar[d] \ar[rr] && Ps(D_2,\Cat) \ar[d]^{Ps(J_{3},\Cat)} \\
\Alg_2 \ar[rr]_{R_3} && Ps(P_2,\Cat)}
\end{equation*}

in which $R_3:\Alg_2  \to Ps(P_2,\Cat)$ sends $(X,\otimes,\alpha)$ to the pair of parallel natural transformations in $\Cat(X^4,X)$ with components $((ab)c)d \rightrightarrows a(b(cd))$ the two paths of the pentagon.  In the pullback $\Alg_3$ these are forced to be equal, so that we obtain  semi-monoidal categories as desired.  Arguing as before, we see that the pullback square lies in $\Lp_{\mathcal M}$.

\subsection{Inaccessibility of strict monoidal categories and strong monoidal functors}\label{sect:strictmonoidal}
The above construction cannot be adapted to show that the $2$-category $\SMonCat$ of \emph{strict monoidal categories and strong monoidal functors} belongs to $\Lpm$.  For in order to add the strict associativity equation, one needs to form a pullback along $Ps(\nabla,\Cat)$ where  $\nabla:P_1 \to D_1$ is the codiagonal which identifying the two parallel $1$-cells of $P_1$.   However, the $2$-functor $\nabla$ is \emph{not cellular} so Theorem~\ref{thm:cellular} does not apply.  Indeed, we will show now that \emph{idempotents do not split} in $\SMonCat$ so that it is not accessible of any degree.  For a general result of this flavour, see Theorem~\ref{thm:limitations}.

Let $\MonCat_p$ denote the $2$-category of monoidal categories and strong monoidal functors and consider a monoidal category $X$.  It is well known that MacLane's coherence theorem \cite{MacLane1963} enables the construction of a strict monoidal category $QX$ together with an equivalence $QX \to X$ in $\MonCat_p$.  Explicitly, the objects of $QX$ are words $\overline{a} = [a_1,\ldots,a_n]$ of objects in $X$.  Let $l[a_1,\ldots,a_n] \in X$ denote the \emph{left bracketed} tensor product, defined inductively by $l[-] = I$, $l[a] = a$ and $l[a_1,\ldots,a_n,a_{n+1}] = l[[a_1,\ldots,a_n] \otimes a_{n+1}$.  By definition, a morphism $f:\overline{a} \to \overline{b}$ in $QX$ is a morphism $f:l(\overline{a}) \to l(\overline{b}) \in X$.  Now $QX$ admits a strict monoidal structure extending the free monoid structure on its object set.  The tensor product of morphisms $f:l(\overline{a}) \to l(\overline{b})$ and $g:l(\overline{c}) \to l(\overline{d})$ is the composite
\begin{equation*}
\xymatrix{
l(\overline{a}\overline{b}) \ar[r]^{\cong} & l(\overline{a})l(\overline{b}) \ar[r]^{f \otimes g} & l(\overline{c})l(\overline{d}) \ar[r]^{\cong} & l(\overline{c}\overline{d})
}
\end{equation*}
whose first and last components are the unique structural isomorphisms arising from the coherence theorem.  

By construction $l:QX \to X$ is a surjective equivalence of categories.  It is also a normal strong monoidal functor with coherence constraints $l(\overline{a}\overline{b}) \cong l(\overline{a})l(\overline{b})$ the structural isomorphisms.  Furthermore $l$ has a strong monoidal section $s:X \to QX$ sending $a$ to $[a]$.  Therefore $s \circ l:QX \to QX$ is an idempotent in $\SMonCat_p$ with splitting $X$ in $\MonCat_p$.  

Now if the idempotent split in $\SMonCat_p$ then, since split idempotents are preserved by any $2$-functor and in particular the inclusion $\SMonCat_{p} \hookrightarrow \MonCat_{p}$, it would follows that $X$ was \emph{isomorphic} in $\MonCat_p$ to a strict monoidal category.  Therefore, if we can exhibit a small monoidal category which is not isomorphic to a strict one, the conclusion will be that idempotents do not split in $\SMonCat_p$.



To this end, consider the skeleton $S_{\leq \omega}$ of the category of at most countable sets.  It admits a choice of finite products $a\times b$ and so an associated monoidal structure, where the associator and unit isomorphisms are determined by the universal properties of the projections.  A monoidal category, arising from a cartesian category in this way, is called \emph{cartesian monoidal}.  Suppose that $(F,f,f_{0}):(S_{\leq \omega},\times,1) \to (X,\otimes,i) \in \MonCat_p$ is an isomorphism of monoidal categories with $X$ strict monoidal.  Then by transport of structure along the invertible $F$ we obtain an isomorphic strict monoidal structure $(X,\otimes,i) \cong (S_{\leq \omega},\star,J)$ such that the composite isomorphism of monoidal categories $(1,g,g_0):(S_{\leq \omega},\times,1) \cong (X,\otimes,i) \cong  (S_{\leq \omega},\star,J)$ has underlying functor the identity.  Note that $J=1$ since $S_{\leq \omega}$ is skeletal, and that $g_0:1 \to 1$ is the identity since $1$ is terminal; since the unit is terminal the second monoidal structure is a so-called \emph{semicartesian} monoidal category.  Now it is straightforward to show that a semicartesian monoidal structure is \emph{cartesian} if and only if the projections depicted on the bottom row below form product projections.  

\begin{equation*}
\xymatrix{
a \ar[d]_{1} & a \times 1 \ar[d]|{g_{a,1}} \ar[l]_{\cong} & a\times b \ar[d]|{g_{a,b}} \ar[l]_{a \times !} \ar[r]^{! \times b} & 1 \times b \ar[d]|{g_{1,b}} \ar[r]^{\cong} & b \ar[d]^{1} \\
a & a \star 1 \ar[l]^{=} & a\star b \ar[l]^{a \star !} \ar[r]_{! \star b} & 1 \star b \ar[r]_{=} & b }
\end{equation*}

They do so, in this case, since we have an isomorphism of rows, and the upper row is a product.  It then follows routinely that $(S_{\leq \omega},\star,1)$ is the monoidal structure associated to the terminal object $1$ and choice of products $a \star b$ with projections the above ones.  Now this cartesian monoidal structure on $S_{\leq \omega}$ is strict monoidal.  However, by a clever argument of Isbell (see page 164 of \cite{CWM}) such a cartesian strict monoidal structure cannot exist.

Therefore idempotents do not split in $\SMonCat_p$.  Indeed, since idempotents do split in $\MonCat_p$ and the inclusion $\SMonCat_p \to \MonCat_p$ is surjective up to retracts, what we have shown is that $\MonCat_p$ is the \emph{idempotent completion} of $\SMonCat_p$ --- thus $\SMonCat_p$ provides a \emph{natural example of an inaccessible category whose idempotent completion is accessible}.  

\begin{Remark}
One can similarly prove that the $2$-category $\UMonCat_p$ of unbiased monoidal categories is the idempotent completion of $\SMonCat_p$ --- by the uniqueness of idempotent completions, this gives a simple proof that $\MonCat_p$ and $\UMonCat_p$ are $2$-equivalent.
\end{Remark}

\subsection{Bicategories and homomorphisms}\label{sect:monadic2}
This time we consider $\Bicat_2$, the 2-category of small bicategories, pseudofunctors and icons \cite{Lack2007Icons}.  The modifications required to the previous example are fairly minor.  To begin with, we consider $[P_{1},\Cat]$ --- the 2-category of internal graphs in $\Cat$.  Each bicategory $X$ has an underlying graph $s,t:X_1 \rightrightarrows X_0$ in $\Cat$ in which $X_0$ is its set of objects and $X_1(a,b)=X(a,b)$.  To impose the discreteness condition on $X_0$, we form the pullback below.
\begin{equation*}
\xymatrix{
\CatGph_2 \ar[r] \ar[d] & [P_1,\Cat] \ar[d]^{ev_{0}} \\
\Set \ar[r]_{D} & \Cat}
\end{equation*}
in which $D$ sends a set to the corresponding discrete category.  Since the right leg is an isofibration the pullback is a bipullback.  Since this leg and the bottom leg are finitary right adjoints between locally finitely presentable $2$-categories, it follows from Theorem 6.11 of \cite{Bird1984Limits} that the pullback is locally finitely presentable and that the pullback projections are finitary right adjoints.  In particular $\CatGph_2 \in \Lp$.  Now an internal surjective equivalence $F:X \to Y \in \CatGph_2$ is just a bijective on objects functor which is a local surjective equivalence: that is, each morphism $X(a,b) \to Y(Fa,Fb) \in \Cat$ is a surjective equivalence of categories.  The local surjective equivalences are cofibrantly generated by the set $\atwo_{I}=\{\atwo_{i}:\atwo_{\A} \to \atwo_{B}:i:A \to B \in I\}$ where $I$ is the set of generating cofibrations in $\Cat$ described in Section~\ref{sect:catCof}.  Adjoining to these the two morphisms $\varnothing \to 1$ and $1 +1 \to 1$ of discrete $\Cat$-graphs, we see that the surjective equivalences in $\CatGph_2$ are generated by a set of morphisms between finitely presentable objects.  Therefore $\CatGph_2 \in \Lpm$ by Example~\ref{ex:1}.

Given a $\Cat$-graph $X$ let us form the pullback below.
\begin{equation*}
\xymatrix{
X_1 \times_{X_{0}} X_1 \ar[r]^-{p} \ar[d]_{q} \ar[d] & X_1 \ar[d]^{t}\\
X_1 \ar[r]_{s} & X_0}
\end{equation*}
We note that since $X_0$ is discrete this pullback is equally the pseudo-pullback and so a finite flexible limit.  The data for a bicategorical composition map is a functor $m_X:X_1 \times_{X_{0}} X_1 \to X_1$, and such composition maps are added by forming the pullback below.
\begin{equation*}
\xymatrix{
\Alg_1 \ar[rrr] \ar[d] &&& Ps(P_1,\Cat) \ar[d]^{Ps(J_1,\Cat)} \\
\CatGph_2 \ar[rrr]^{X \mapsto (X_{1} \times_{X_0} X_{1},X_{1})} &&& Ps(P_0,\Cat)}
\hspace{0.5cm}
\begin{xy}
(0,0)*+{X_1 \times_{X_{0}} X_1}="00";(30,0)*+{Y_1 \times_{Y_{0}} Y_1}="10";(0,-15)*+{X_1}="01";(30,-15)*+{Y_1}="11";
{\ar^{f_1 \times _{f_{0}} f_1} "00"; "10"};{\ar_{m_X} "00"; "01"};{\ar^{m_Y} "10"; "11"};{\ar_{f_1} "01"; "11"};
(15,-8)*+{\cong^{\overline{f}}};
\end{xy}
\end{equation*}
Since the bottom leg involves only finite flexible limits, it belongs to $\Lpm$ by Lemma~\ref{lem:2from3}(2).  The right leg is an isofibration in $\Lpm$ by  Theorem~\ref{thm:cellular}, so that by Theorem~\ref{thm:limits2} the pullback belongs to $\Lpm$.  A morphism of $\Alg_1$ then consists of a morphism of $\CatGph_2$ together with a natural isomorphism as above right.  The composition map should satisfy the equations $s_X \circ m_X = s_X \circ p_X$ and $t_X \circ m_X = t_X \circ q_X$ which can be added by forming the pullback
\begin{equation*}
\xymatrix{
\Alg_2 \ar[rrrr] \ar[d] &&&& Ps(D_2,\Cat)^2 \ar[d]^{Ps(J_2,\Cat)^2} \\
\Alg_1 \ar[rrrr]^-{((s_X m_X, s_X  p_X),(t_X m_X, t_X  q_X))} &&&& Ps(P_1,\Cat)^2}
\end{equation*}
which, arguing as before, is easily seen to lie in $\Lpm$.  We add the associativity isomorphism by forming a pullback on the left below
\begin{equation*}
\xymatrix{
\Alg_3 \ar[r] \ar[d] & Ps(I_2,\CatGph) \ar[d]^{Ps(I_1,\CatGph)} \\
\Alg_2 \ar[r]^-{R} & Ps(P_1,\CatGph)}
\hspace{0.5cm}
\xymatrix{
X_1 \times_{X_{0}} \times X_{1} \times _{X_{0}}\times X_{1} \ar[rr]<0.5ex>^-{m_{x} \circ (m_{x} \times 1)} \ar[rr]<-0.5ex>_-{m_{x} \circ (1 \times m_x)} \ar[d]<0.5ex> \ar[d]<-0.5ex> && X_{1} \ar[d]<0.5ex> \ar[d]<-0.5ex> \\
X_0 \ar[rr]<0.5ex>^{1} \ar[rr]<-0.5ex>_{1} && X_0}
\end{equation*}
where $R$ sends $C$ to the parallel pair of $\CatGph$ morphisms above right and finally a further pullback
\begin{equation*}
\xymatrix{
\Alg_4 \ar[rr] \ar[d] && Ps(D_2,\CatGph) \ar[d]^{Ps(J_3,\CatGph)} \\
\Alg_3 \ar[rr]^-{} && Ps(P_2,\CatGph)}
\end{equation*}
adding the pentagon equation.  We leave it to the reader to handle the units.  It then follows that $\Bicat_2$ belongs to $\Lpm$.  

Let $\twocat_2 \hookrightarrow \Bicat_2$ denote the full sub-2-category of small $2$-categories.  Arguing as in Section~\ref{sect:strictmonoidal}, but now using the coherence theorem for bicategories \cite{MacLane1982}, we see that $\twocat_2$ does not have split idempotents and that $\Bicat_2$ is its idempotent completion.  

\subsection{Completeness and cocompleteness}\label{sect:monadic3}

Let $A$ be a small category and $p:A \to 1$ the unique map.   A small category $X$ has $A$-limits just when $p^*:[1,X] \to [A,X]$ has a right adjoint $lim$ which equips it with a choice of $A$-limits.  Moreover a morphism $F:X \to Y$ between two such categories preserves the limits just when the commutative square on the left below forms
\begin{equation*}
\begin{xy}
(0,0)*+{X^{1} }="00";(20,0)*+{X^{A}}="10";(0,-15)*+{Y^{1}}="01";(20,-15)*+{Y^{A}}="11";
{\ar^{p^{*}} "00"; "10"};{\ar_{F_{*}} "00"; "01"};{\ar^{F_{*}} "10"; "11"};{\ar_{p^{*}} "01"; "11"};
\end{xy}
\hspace{2cm}
\begin{xy}
(0,0)*+{X^{A} }="00";(20,0)*+{X^{1}}="10";(0,-15)*+{Y^{A}}="01";(20,-15)*+{Y^{1}}="11";
{\ar^{lim} "00"; "10"};{\ar_{F_{*}} "00"; "01"};{\ar^{F_{*}} "10"; "11"};{\ar_{lim} "01"; "11"};
(10,-8)*+{\cong};
\end{xy}
\end{equation*}
part of a pseudomorphism of adjunctions --- this is equally to say that its \emph{mate}, the natural transformation above right,  is invertible. 

Now let $A = \{A_i:i \in I\}$ be a set of finitely presentable categories --- we let $\ALim$ denote the 2-category of small categories equipped with $A$-limits and functors preserving them, together with all natural transformations between these.  

 We then have a pullback
\begin{equation*}
\xymatrix{
\ALim \ar[d] \ar[r] & Ps(\Adj,\Cat)^{I} \ar[d]^{Ps(f,1)^{I}} \\
\Cat \ar[r]^-{K} & Ps(\atwo,\Cat)^{I}
}
\end{equation*}
where $K$ sends $X$ to the family $(p^*:X^{1} \to X^{A_{i}})_{i \in I}$ and $f:\atwo \to \Adj$ is the cellular morphism selecting the left adjoint $0 \to 1$ in the free adjunction (see Section~\ref{sect:cellular2}).  The right leg is an isofibration of $\Lpm$ by Theorem~\ref{thm:cellular}; since powers by each $A_i$ are finite flexible limits the bottom leg is a morphism of $\Lpm$.  By Theorem~\ref{thm:limits2} the whole pullback diagram belongs to $\Lpm$.  

As a particular instance of this, we see that the 2-category $\Lex$ of small finitely complete categories belongs to $\Lpm$ as does the forgetful 2-functor $U:\Lex \to \Cat$.  A similar argument captures categories equipped with classes of finite colimits or, more generally, categories admitting a set of left or right Kan extensions.  


\subsection{Regular categories and exactness properties}\label{sect:monadic4}
A category is said to be \emph{regular} if it admits finite limits and coequalisers of kernel pairs, and regular epimorphisms are stable under pullback.  A regular functor between regular categories is one preserving finite limits and coequalisers of kernel pairs.  We will now show that the $2$-category $\Reg$ of small regular categories and regular functors belongs to $\Lpm$.  We also describe the modification required to cover Barr-exact categories and these examples can easily be adapted to cover coherent categories, pretopoi and similar structures.

Firstly, let us remind the reader of a few facts about relative adjoints (equivalently, absolute left liftings).  Given a diagram in $\Cat$ on the left below
\begin{equation*}
\begin{xy}
(30,0)*+{B}="10";(0,-20)*+{A}="01";(30,-20)*+{C}="11";
;{\ar^{g} "10"; "11"};{\ar_{j} "01"; "11"};
\end{xy}
\hspace{2cm}
\begin{xy}
(30,0)*+{B}="10";(0,-20)*+{A}="01";(30,-20)*+{C}="11";
{\ar^{f} "01"; "10"};{\ar^{g} "10"; "11"};{\ar_{j} "01"; "11"};
{\ar@{=>}^{\theta}(20,-17)*+{};(20,-10)*+{}};
\end{xy}
\end{equation*}
we say that the $2$-cell $\theta$ exhibits $f$ as left adjoint \emph{relative to $j$} if the induced restriction map $$B(fa,b) \to C(ja,gb): \alpha \mapsto g\alpha \circ \theta_a$$ is a bijection for each $a \in A$ and $b \in B$.  (In $\Cat$, this amounts to providing, for each $a \in A$, an object $fa \in C$ and a morphism $\theta_a:ja \to gfa$ satisfying the universal property captured by the above bijection.)  The case of an ordinary adjunction occurs when $j=Id:C \to C$.  

In a general $2$-category $\C$, we say that a diagram as on the right above in $\C$ exhibits $f$ as left adjoint to $g$ relative to $j$ just when $\C(X,\theta)$ exhibits $\C(X,f)$ as a relative left adjoint to $\C(X,g)$ along $\C(X,j)$ for each $X \in \C$.  This condition can, of course, equally be expressed in elementary $2$-categorical terms, and coincides with the usual notion when $\C= \Cat$.  It equally amounts to the fact that $f$ is the \emph{absolute left lifting} of $g$ along $j$. 

\begin{Example}\label{ex:kernel}
Given $X \in \Lex$ we have the functor $kp:X^{\bullet \to \bullet} \to X^{\bullet \rightrightarrows \bullet}$ sending a morphism to its kernel pair.  It is easy to see that $X$ admits coequalisers of kernel pairs just when the relative left adjoint

\begin{equation}
\begin{xy}
(30,0)*+{X}="10";(0,-20)*+{X^{\bullet \rightarrow \bullet}}="01";(30,-20)*+{X^{\bullet \rightrightarrows \bullet}}="11";
{\ar^{coeq_X} "01"; "10"};{\ar^{\Delta_X} "10"; "11"};{\ar^{kp_X} "01"; "11"};
{\ar@{=>}^{}(20,-15)*+{};(20,-8)*+{}};
\end{xy}
\end{equation}
exists in $\Cat$.  
\end{Example}

The problem with relative left adjoints is that, unlike adjoints, they are not an algebraic notion in a $2$-category --- for instance, they are not preserved by any $2$-functor.  However, as the following lemma shows, they can be defined using the algebraic notion of adjunction in a $2$-category admitting comma objects.  The following lemma can be drawn out of material in Section 3.5 of \cite{Riehl2019Elements}, in particular Theorem 3.5.11.

\begin{Lemma}\label{lem:comma}
Consider a comma object in a $2$-category $\C$
\begin{equation*}
\begin{xy}
(0,0)*+{j/g}="00";(30,0)*+{B}="10";(0,-20)*+{A}="01";(30,-20)*+{C}="11";
{\ar^{p_B} "00"; "10"};{\ar_{p_A} "00"; "01"};{\ar^{g} "10"; "11"};{\ar_{j} "01"; "11"};
{\ar@{=>}^{\theta}(13,-13)*+{};(18,-7)*+{}};
\end{xy}
\end{equation*}
The following are equivalent:
\begin{enumerate}
\item $g$ has a left adjoint relative to $j$;
\item the projection $p_B:j/g \to A$ has a left adjoint with identity unit, and
\item the projection $p_B:j/g \to A$ has a left adjoint with invertible unit.
\end{enumerate}
\end{Lemma}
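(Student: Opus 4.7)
The plan is to prove $(1) \Rightarrow (2) \Rightarrow (3) \Rightarrow (1)$. The implication $(2) \Rightarrow (3)$ is trivial since an identity is invertible, and I read the statement as concerning $p_A \colon j/g \to A$ (the codomain $A$ indicates that the subscript $B$ is a typo). The other two implications should follow by playing the $1$- and $2$-dimensional universal properties of $j/g$ against the defining bijection of a relative adjoint; throughout I take the convention $\theta \colon j p_A \Rightarrow g p_B$.

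For $(1) \Rightarrow (2)$: given $\eta \colon j \Rightarrow gf$ exhibiting $f$ as left adjoint to $g$ relative to $j$, I would form the cone $(\mathrm{Id}_A, f, \eta)$ over $j/g$ and invoke the $1$-dimensional universal property to produce a unique $\hat f \colon A \to j/g$ with $p_A \hat f = \mathrm{Id}_A$, $p_B \hat f = f$ and $\theta \hat f = \eta$; this makes $\hat f$ a strict section of $p_A$, supplying identity as the unit. To construct the counit $\epsilon \colon \hat f p_A \Rightarrow \mathrm{Id}_{j/g}$, I would appeal to the $2$-dimensional universal property: such a $2$-cell amounts to a pair $(\alpha \colon p_A \Rightarrow p_A, \beta \colon fp_A \Rightarrow p_B)$ satisfying $g\beta \cdot \eta p_A = \theta \cdot j\alpha$. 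Taking $\alpha = \mathrm{Id}_{p_A}$, the unique $\beta$ with $g\beta \cdot \eta p_A = \theta$ comes from the relative adjointness bijection applied to $\theta$ itself. One triangle equation is then immediate; the other, $\epsilon \hat f = \mathrm{Id}_{\hat f}$, will reduce componentwise, and its $p_B$-component follows from the uniqueness half of the relative adjointness after whiskering $g\beta \cdot \eta p_A = \theta$ by $\hat f$ to give $g(\beta \hat f) \cdot \eta = \eta$.

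For $(3) \Rightarrow (1)$: given $s \dashv p_A$ with counit $\epsilon$ and invertible unit $\eta$, I would define $f := p_B s$ and take the candidate relative unit to be $\eta' := (\theta s) \cdot (j\eta) \colon j \Rightarrow gf$. To verify this exhibits $f$ as relative left adjoint, I must check that for each $X$, $x \colon X \to A$ and $b \colon X \to B$ the map $\beta \mapsto g\beta \cdot \eta'_x$ is a bijection $\C(X,B)(fx,b) \to \C(X,C)(jx,gb)$. Given $\phi \colon jx \Rightarrow gb$, I would use the $1$-dimensional universal property of $j/g$ to produce a unique $r_\phi \colon X \to j/g$ with $p_A r_\phi = x$, $p_B r_\phi = b$, $\theta r_\phi = \phi$, and define the inverse by $\phi \mapsto p_B(\epsilon r_\phi)$. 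The triangle identity $p_A \epsilon \cdot \eta p_A = \mathrm{Id}$ together with invertibility of $\eta$ will force $p_A(\epsilon r_\phi) = \eta_x^{-1}$, and the comma compatibility of $\epsilon r_\phi$ then yields $g(p_B \epsilon r_\phi) \cdot \eta'_x = \phi$. For the reverse round trip, given $\beta$ with $g\beta \cdot \eta'_x = \phi$, the pair $(\eta_x^{-1}, \beta)$ assembles through the comma universal property into a $2$-cell $sx \Rightarrow r_\phi$ which, like $\epsilon r_\phi$, corresponds to $\mathrm{Id}_x$ under the adjunction bijection $\C(X, j/g)(sx, r_\phi) \cong \C(X,A)(x,x)$, whence $\beta = p_B(\epsilon r_\phi)$.

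The main obstacle I anticipate is simply the bookkeeping in $(3) \Rightarrow (1)$, where three pieces of $2$-categorical data interact --- the $1$- and $2$-dimensional universal properties of the comma object, the triangle identities of $s \dashv p_A$, and the invertibility of $\eta$. Individually the steps are routine, but coordinating them to produce the two mutually inverse maps demands care, particularly with the directions of the various $2$-cells; this is where any subtle error would enter.
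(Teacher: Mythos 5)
Your proof is correct, and it is organised differently from the one in the paper. You prove the cycle $(1)\Rightarrow(2)\Rightarrow(3)\Rightarrow(1)$, working formally inside $\C$ throughout: the section $\hat f$ of $p_A$ comes from the one-dimensional universal property, the counit from the two-dimensional one together with the relative-adjointness bijection at $X=j/g$, and the implication $(3)\Rightarrow(1)$ is verified by exhibiting the two mutually inverse maps directly. The paper instead proves the two biconditionals $(1)\Leftrightarrow(2)$ and $(2)\Leftrightarrow(3)$ separately: for the first it sets up the same bijection between pairs $(f,\theta)$ and sections of $p_A$, but then reduces the adjointness claim to the case $\C=\Cat$ by applying representables (which preserve comma objects and detect both notions of adjoint) and checks it there by an elementary elementwise argument; for the second it uses that the projection $j/g\to A$ is an isofibration, so a left adjoint with invertible unit can be rectified to one with identity unit. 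Your route buys two things: it never needs the isofibration fact (the implication $(3)\Rightarrow(2)$ is simply bypassed by closing the cycle through $(1)$), and it avoids the reduction to $\Cat$, at the cost of more explicit bookkeeping with the two-dimensional universal property and the triangle identities --- bookkeeping which you carry out correctly, including the identification $p_A(\epsilon r_\phi)=\eta_x^{-1}$ and the uniqueness argument via the adjunction bijection $\C(X,j/g)(sx,r_\phi)\cong\C(X,A)(x,x)$. You are also right that the codomain $A$ in the statement indicates the projection intended is $p_A$; the paper's own proof confirms this reading.
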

\begin{proof}
For the equivalence of (1) and (2), firstly observe that there is a bijection between pairs $(f,\theta)$ as on the left below
\begin{equation*}
\begin{xy}
(30,0)*+{B}="10";(0,-20)*+{A}="01";(30,-20)*+{C}="11";
{\ar^{f} "01"; "10"};{\ar^{g} "10"; "11"};{\ar_{j} "01"; "11"};
{\ar@{=>}^{\theta}(20,-17)*+{};(20,-10)*+{}};
\end{xy}
\hspace{2cm}
\begin{xy}
(30,0)*+{j/g}="10";(0,-20)*+{A}="01";(30,-20)*+{A}="11";
{\ar^{r} "01"; "10"};{\ar^{p_A} "10"; "11"};{\ar_{1} "01"; "11"};
{\ar@{=}^{}(20,-17)*+{};(20,-10)*+{}};
\end{xy}
\end{equation*}
and sections $r$ of $p_A$ as on the right above.  From $(f,\theta)$ the corresponding map $r:A \to j/g$ is the unique map to the comma object such that $p_A \circ r = 1$, $p_B \circ r = f$ and $\lambda_{A} \circ r = \theta$.  It remains to show that
\begin{itemize}
\item $\theta$ exhibits $f$ as a relative left adjoint to $g$ along $j$ just when the identity exhibits $r$ as relative left adjoint to $p_A$ along $1_A$.
\end{itemize}  
This amounts to showing that on application of each representable $\C(X,-)$ the left diagram is a relative left adjoint if and only if the right one is.  Since representables also preserve comma objects, it suffices to prove the above claim when $\C = \Cat$.  In this case $r:A \to j/g$ sends $x$ to $(x,\theta_x:jx \to gfx,fx)$ whilst the projection $p_A$ from the comma category sends a triple $(a,\alpha,b)$ to $a$.  To say that the identity exhibits $r$ as left adjoint to $p_A$ along $1_A$ (that is, as a genuine left adjoint) now asserts precisely that given the data $x \in A$, $(a,\alpha,b) \in j/g$ and $u:x \to a \in A$ there exists a unique morphism $u^{\prime}:fx \to b$ making the square
\begin{equation*}
\xymatrix{
jx \ar[d]_{ju} \ar[r]^{\theta_a} & gfx \ar[d]^{gu^{\prime}} \\
ja \ar[r]^{\alpha} & gb}
\end{equation*}
commute.  This clearly implies the condition for a relative left adjoint (on taking $u=1$) and is also implied by it.  This proves the equivalence of (1) and (2).  The equivalence of (2) and (3) follows from the fact that the projection $p_B$ is an isofibration.
\end{proof}


Now let $C$ denote the free cospan.  Then the cospan $(kp_X,\Delta_X)$ of Example~\ref{ex:kernel} is the object component of a 2-functor $K_1:\Lex \to Ps(C,\Cat)$.  Now the components of $K_1$ are finite powers -- and so finite flexible limits -- which commute with filtered colimits in $\Lex$.  Since by Theorem~\ref{thm:cellular} we have that $Ps(C,\Cat) \in \Lpm$ with filtered colimits and flexible limits pointwise, it follows from Lemma~\ref{lem:2from3} that $K_1 \in \Lpm$.  We also have the 2-functor $K_2:Ps(C,\Cat) \to Ps(\atwo,\Cat)$ sending $(j:A \to C,C \leftarrow B:g)$ to the projection $p_A:j/g \to A$ from the comma object which similarly belongs to $\Lpm$.  In particular, composing these $2$-functors we obtain $K:\Lex \to Ps(C,\Cat) \to Ps(\atwo,\Cat) \in \Lpm$.  

Now by Example~\ref{ex:kernel} and Lemma~\ref{lem:comma}, an object of the pullback 
\begin{equation*}
\xymatrix{
\Lex_{ckp} \ar[d] \ar[rr] && Ps(\Coref,\Cat) \ar[d]^{Ps(u,1)} \\
\Lex \ar[rr]^-{K} && Ps(\atwo,\Cat)
}
\end{equation*}
is a finitely complete category with coequalisers of kernel pairs; similarly, a morphism is a finite limit preserving functor preserving coequalisers of kernel pairs. By Theorem~\ref{thm:cellular} the whole diagram belongs to $\Lpm$.

Next, let $X \in \Lex_{ckp}$.  We must capture the exactness property defining regular categories: namely, that coequalisers of kernel pairs are pullback stable.  To that end, consider a cospan consisting of a horizontal and vertical leg.  We have a functor $c^2_Xp^1_X:[C,X] \to [\boxminus,X]$ which takes the cospan, and pulls it back along the horizontal leg to obtain a square, and then factors both vertical legs of the square through the coequalisers of their kernel pairs, and adds the induced map between the coequalisers yielding a composable pair of squares.  On the other hand we have a functor $p^2_Xc^1_X:[C,X] \to [\boxminus,X]$ which firstly factors the vertical leg through the coequaliser of its kernel pair, and then pulls back along the horizontal map.  It is easy to see that we have a natural transformation $\phi_X:c^2_Xp^1_X \Rightarrow p^2_Xc^1_X$ induced by the universal property of the coequaliser, and then coequalisers of kernel pairs are pullback stable --- by construction --- just when $\phi_X$ is invertible. 

Because the operations of pullback, kernel pair and coequaliser thereof, are preserved by any morphism of $\Lex_{ckp}$ the components $\phi_X$ are the object components of a 2-functor $\Lex_{ckp} \to Ps(D_2,\Cat)$ involving only finite powers of objects in $\Lex_{ckp}$, and so belongs to $\Lpm$.  The 2-category of regular categories is then the pullback
\begin{equation*}
\xymatrix{
\Reg \ar[d] \ar[rr] && Ps(I_2,\Cat) \ar[d]^{Ps(J,1)} \\
\Lex_{ckp} \ar[rr]^-{K} && Ps(D_2,\Cat)
}
\end{equation*}
where $J:D_2 \to I_2$ is the cellular quotient map from the free $2$-cell to the free invertible $2$-cell, and so belongs to $\Lpm$ by Theorem~\ref{thm:cellular}.

Let us remark that we have done above shows how to define \emph{regular object} in any $2$-category admitting finite powers and comma objects --- for instance, in a $2$-category with pie limits.  Let us outline how to handle \emph{Barr-exact} categories in a similar way.  To this end, let $ERel(X)$ denote the category of equivalence relations in $X$; then $X$ has coequalisers of kernel pairs precisely when the diagonal $\Delta:X \to ERel(X)$ has a left adjoint; in this case the kernel pair functor $kp:X^{\atwo} \to ERel(X)$ admits a left adjoint and the final condition for a Barr-exact category --- namely, that equivalence relations are effective --- amounts to asking that the unit of this adjunction be invertible.  

In showing that the $2$-category of Barr-exact categories belongs to $\Lpm$ the only real additional subtlety concerns the ``arity" $ERel(X)$.  That is, we should show that the assignment $ERel(-):\Reg \to \Cat$ belongs to $\Lpm$.  To achieve this we will show that $ERel(X)$ can be constructed from $X$ using finite flexible limits.  This is done in three stages.  Firstly, consider the diagram below left in which $\Box$ denotes the generic category containing a commutative square.
\begin{equation*}
\begin{xy}
(0,0)*+{X^{\Box}}="00";(15,-15)*+{X^{C}}="01";(30,0)*+{X^{\Box}}="20";
{\ar^{1} "00"; "20"};{\ar_{u} "00"; "01"};{\ar_{pb} "01"; "20"};
{\ar@{=>}^{\eta}(15,-4)*+{};(15,-11)*+{}};
\end{xy}
\hspace{1.5cm}
\begin{xy}
(-15,0)*+{X^{\atwo}}="-10";
(0,0)*+{X^{\Box}}="00";(15,-15)*+{X^{C}}="01";(30,0)*+{X^{\Box}}="20";
{\ar^{i} "-10"; "00"};{\ar^{1} "00"; "20"};{\ar_{u} "00"; "01"};{\ar_{pb} "01"; "20"};
{\ar@{=>}^{\eta}(15,-4)*+{};(15,-11)*+{}};
\end{xy}
\end{equation*}
Here $u$ takes the underlying cospan of a commutative square and $pb$ takes its pullback; then a pullback square is an object of $X^{\Box}$ at which the component of the canonical natural transformation $\eta:1 \Rightarrow pb \circ u$ is invertible; thus the subobject $Pb(X) \hookrightarrow X^{\Box}$ of pullback squares in $X$ is obtained as the \emph{inverter} of $\eta$.  Let $i:X^{\atwo} \to X^{\Box}$ send $m:a \to b$ to the square
\begin{equation*}
\xymatrix{
a \ar[d]_{1} \ar[r]^{1} & a \ar[d]^{m} \\
a \ar[r]_{m} & b}
\end{equation*}
so that $m$ is mono just when this square is a pullback.  It follows that the full subcategory $Mono(X) \hookrightarrow X^{\atwo}$ of monomorphisms is the inverter of the $2$-cell in the diagram above right.  Let $\Delta_2$ denote the full subcategory of the simplicial category on the objects $0,1$ and $2$. Now an equivalence relation can be described as an object $A$ of $X^{\Delta_{2}^{op}}$ as below left
\begin{equation*}
\xy
(0,0)*+{A_{2}}="c0"; (20,0)*+{A_{1}}="b0";(40,0)*+{A_{0}}="a0";
{\ar@<1.5ex>^{d} "b0"; "a0"}; 
{\ar@<0ex>|{i} "a0"; "b0"}; 
{\ar@<-1.5ex>_{c} "b0"; "a0"}; 
{\ar@<3ex>^{p} "c0"; "b0"}; 
{\ar@<0ex>|{m} "c0"; "b0"}; 
{\ar@<-3ex>_{q} "c0"; "b0"};
{\ar@<1.5ex>|{r} "b0"; "c0"};
{\ar@<-1.5ex>|{l} "b0"; "c0"};
\endxy
\hspace{1cm}
\xy
(0,5)*+{A_{2}}="a0";(15,5)*+{A_{1}}="b0"; (0,-5)*+{A_{1}}="c0"; (15,-5)*+{A_{0}}="d0";
{\ar^{p} "a0";"b0"};
{\ar_{q} "a0";"c0"};
{\ar^{c} "b0";"d0"};
{\ar_{d} "c0";"d0"};
\endxy
\hspace{1cm}
\xy
(0,0)*+{A_{1}}="a0";(15,0)*+{(A_{0})^{2}}="b0"; 
{\ar^-{\langle d,c \rangle} "a0";"b0"};
\endxy
\end{equation*}
for which the central rectangle above is a pullback, and for which the induced map above right is monic.  The subobject of equivalence relations $ERel(X) \hookrightarrow X^{\Delta_{2}^{op}}$ is accordingly a pullback of the form:
\begin{equation*}
\xymatrix{
ERel(X) \ar[d] \ar[r]^{} & X^{\Delta_{2}^{op}} \ar[d]^{} \\
Pb(X) \times Mono(X) \ar[r] & X^{\Box} \times X^{\atwo}
}
\end{equation*}
But is this pullback a finite flexible limit?  Since the lower horizontal leg is an isofibration it is certainly a bipullback; in fact since it is a discrete isofibration the pullback induces an idempotent on the pseudopullback whose splitting is the pullback itself  --- see Proposition~\ref{prop:normal} in the appendix  --- and so indeed finite flexible.  
Alternatively, if one wishes to make do with pie limits, it suffices to simply \emph{define} $ERel(X)$ as the pseudopullback --- since an equivalent object is obtained in this way, it does not alter the resulting notion of Barr-exact object/category.


\section{2-monads, their algebras and pseudoalgebras}\label{sect:2monad}
In the present section, we show that under natural conditions the $2$-categories of lax, pseudo and colax algebras for a $2$-monad $T$, together with their pseudomorphisms, belong to $\Lpm$.  We also obtain a positive result for the strict algebras of a \emph{flexible} $2$-monad --- this covers the examples of monoidal categories, bicategories and categories with finite limits but not those of structures such as regular categories.  Furthermore, we show that in certain circumstances accessibility is equivalent to a weak form of flexibility --- namely, that each pseudoalgebra is isomorphic to a strict one.

Let $T$ be a $2$-monad on a $2$-category $\C$.  A lax $T$-algebra $(X,x,x_0,\overline{x})$ is specified by a morphism $x:TX \to X$ together with a pair of structural $2$-cells as below

\begin{equation}\label{eq:laxalg}
\begin{xy}
(0,0)*+{T^{2}X}="00";(20,0)*+{TX}="10";(0,-15)*+{X}="01";(20,-15)*+{TX}="11";
{\ar^{Tx} "00"; "10"};{\ar_{\mu_X} "00"; "01"};{\ar^{x} "10"; "11"};{\ar_{x} "01"; "11"};
{\ar@{=>}^{\overline{x}}(13,-5)*+{};(7,-10)*+{}};
\end{xy}
\hspace{1.5cm}
\begin{xy}
(0,0)*+{X}="00";(0,-15)*+{TX}="10";(20,-15)*+{X}="11";
{\ar_{\eta_{X}} "00"; "10"};{\ar_{x} "10"; "11"};{\ar^{1} "00"; "11"};
{\ar@{=>}^{x_0}(9,-7)*+{};(4,-12)*+{}};
\end{xy}
\end{equation}
satisfying three equations described in \cite{Lack2002Codescent}.  A \emph{pseudoalgebra} is a lax algebra for which the above structural $2$-cells are invertible.  
A pseudomorphism of lax algebras is specified by an invertible $2$-cell
\begin{equation*}\label{eq:laxmap}
\begin{xy}
(0,0)*+{TX}="00";(20,0)*+{TY}="10";(0,-15)*+{X}="01";(20,-15)*+{Y}="11";
{\ar^{Tf} "00"; "10"};{\ar_{x} "00"; "01"};{\ar^{y} "10"; "11"};{\ar_{f} "01"; "11"};
(10,-8)*+{\cong^{\overline{f}}};
\end{xy}
\end{equation*}
satisfying two equations also described in \cite{Lack2002Codescent}.  Together with the appropriate notion of $2$-cell these form the $2$-category $\LaxTAlg$ of lax $T$-algebras in $\C$, whose full sub $2$-category of pseudoalgebras is denoted by $\PsTAlg$.  There is also the notion of a colax $T$-algebra, in which the orientations of the structural $2$-cells \eqref{eq:laxalg} are reversed.  With the attendant notions of pseudomorphism and $2$-cells, these form the $2$-category $\ColaxTAlg$.

\begin{Prop}\label{prop:2monad}
Let $T$ be a filtered colimit preserving $2$-monad on $\C \in \Lp_{\mathcal M}$.  Then the 2-categories $\LaxTAlg,\PsTAlg$ and $\ColaxTAlg$ belongs to $\Lp_{\mathcal M}$ as does each of the corresponding forgetful $2$-functors to $\C$.
\end{Prop}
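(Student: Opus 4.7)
The plan is to realise each of $\LaxTAlg$, $\PsTAlg$ and $\ColaxTAlg$ as an iterated pullback of isofibrations in $\Lpm$, following the pattern of the semi-monoidal example in Section~\ref{sect:monadic1}. The three pieces of algebra data --- the structure map $x\colon TX \to X$, the two structural $2$-cells $\overline{x}$ and $x_0$, and the three algebra axioms --- are each adjoined by a single pullback, with Theorem~\ref{thm:cellular} supplying the requisite isofibration in the right-hand column and Theorem~\ref{thm:limits2} closing the construction at every step.

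Specifically, first form the pullback
\begin{equation*}
\xymatrix{\Alg_{1} \ar[d] \ar[rr] && Ps(D_1,\C) \ar[d]^{Ps(J_1,\C)} \\
\C \ar[rr]_-{(T,\,1)} && Ps(P_0,\C) \cong \C \times \C}
\end{equation*}
whose objects are structure maps $x\colon TX \to X$ and whose morphisms carry an invertible comparison $\overline{f}\colon y \circ Tf \cong f \circ x$. Next, form the pullback of $Ps(J_2,\C)\colon Ps(D_2,\C) \to Ps(P_1,\C)$ along the $2$-functor $\Alg_1 \to Ps(P_1,\C)$ sending $(X,x)$ to the parallel pair $(x \circ Tx,\, x \circ \mu_X)\colon T^2X \rightrightarrows X$; this adjoins the $2$-cell $\overline{x}$. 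Adjoining $x_0$ is entirely analogous, using the parallel pair $(1_X,\, x \circ \eta_X)\colon X \rightrightarrows X$. Finally, each of the three algebra axioms is imposed by a pullback along $Ps(J_3,\C)\colon Ps(D_2,\C) \to Ps(P_2,\C)$, which forces a parallel pair of $2$-cells to be equal. To pass from $\LaxTAlg$ to $\PsTAlg$ replace $Ps(J_2,\C)$ at the $\overline{x}$- and $x_0$-steps by $Ps(I_1,\C)\colon Ps(I_2,\C) \to Ps(P_1,\C)$, so that the adjoined $2$-cells are forced to be invertible; to reach $\ColaxTAlg$ reverse the orientation of those parallel pairs. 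The claimed forgetful $2$-functor to $\C$ is then the composite of all the pullback projections, and since each step pulls back an isofibration along a $2$-functor in $\Lpm$, the composite is itself an isofibration in $\Lpm$.

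The main obstacle lies in verifying that each bottom leg of the pullbacks belongs to $\Lpm$. By Lemma~\ref{lem:2from3}(2), this reduces at every stage to checking each component as a $2$-functor into $\C$; each such component is a composite built from the previous forgetful $\Alg_i \to \C$ (available inductively), iterates of $T$, and post-composition with components of $\mu$ and $\eta$. Preservation of filtered colimits propagates from the assumed filtered cocontinuity of $T$, while preservation of flexible limits is the more delicate issue: one must use the $2$-naturality of $\mu$ and $\eta$ to rewrite each component in a form that manifestly preserves the finite flexible limits relevant to the exactness property, and then appeal to the fact that post-composition with a fixed morphism respects $\Lpm$. I expect this interplay between the monad structure and the flexible-limit data of $\C$ --- rather than the pullback bookkeeping, which is essentially mechanical --- to be the real technical content of the proof.
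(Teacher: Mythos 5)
Your pullback bookkeeping matches the paper's almost exactly (the paper adjoins $\overline{x}$ and $x_0$ in a single step via $Ps(J_2,\C)^2$ rather than two, but that is immaterial). However, there is a genuine gap at precisely the point you flag as "the real technical content": your claim that the bottom legs of the pullbacks belong to $\Lpm$ is false in general, and no use of the $2$-naturality of $\mu$ and $\eta$ will repair it. The first component of $R_1 \colon C \mapsto (TC,C)$ is $T$ itself, and a $2$-monad that preserves filtered colimits need not preserve any flexible limits --- the free monoidal category $2$-monad on $\Cat$ does not preserve products, for instance. The same problem afflicts $R_2$ and $R_3$, which involve $T^2$ and $T^3$. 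The paper's proof explicitly acknowledges this: each $R_i$ belongs only to $\Acc_{\omega,\mathcal{M}}$ (since $T$ and its iterates preserve filtered colimits), so Theorem~\ref{thm:limits2} is invoked in $\Acc_{\omega,\mathcal{M}}$, and the iterated pullback establishes only that $\LaxTAlg$ and $U\colon\LaxTAlg\to\C$ are accessible with filtered colimits and satisfy Property~$\mathcal{M}$ --- not that they have flexible limits.

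The missing ingredient is a separate argument for the flexible limits: one shows directly that products, inserters and equifiers lift along $U\colon\LaxTAlg\to\C$, by adapting the construction for strict algebras and pseudomorphisms in Section~2 of Blackwell--Kelly--Power, and notes that $\LaxTAlg$ has split idempotents; since $U$ is conservative and preserves the flexible limits and filtered colimits so constructed, the commutativity of finite flexible limits with filtered colimits is then reflected from $\C$. Without this step your argument proves accessibility but not membership in $\Lpm$. You should also note that the lifting of pie limits along $U$ is where the lax/pseudo/colax distinction genuinely matters (the orientation of the structural $2$-cells interacts with the universal property of inserters), whereas the pullback stage treats all three cases uniformly.
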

\begin{proof}
Let us focus on $\LaxTAlg$ --- the other two cases are minor variants.  
Consider the pullback below left.
\begin{equation*}
\xymatrix{\TAlg_{1} \ar[d] \ar[rr] && Ps(D_1,\C) \ar[d]^{Ps(J_{1},\C)} \\
\C \ar[rr]_{R_{1}:C \mapsto (TC,C)} && Ps(P_0,\C)}
\hspace{1.5cm}
\begin{xy}
(0,0)*+{TX}="00";(20,0)*+{TY}="10";(0,-15)*+{X}="01";(20,-15)*+{Y}="11";
{\ar^{Tf} "00"; "10"};{\ar_{x} "00"; "01"};{\ar^{y} "10"; "11"};{\ar_{f} "01"; "11"};
(10,-8)*+{\cong^{\overline{f}}};
\end{xy}
\end{equation*}
An object of the pullback $\TAlg_1$ consists of an action $(X,x:TX \to X)$ whilst a morphism $(f,\overline{f}):(X,x) \to (Y,y)$ consists of $f:X \to Y$ and an invertible $2$-cell as above right.
The $2$-functor $R_1$ does not belong to $\Lpm$ but does belong to $ \Acc_{\omega,\mathcal{M}}$ since $T$ preserves filtered colimits.  The right leg is an isofibration in $\Lp_{\mathcal M}$ and so in $ \Acc_{\omega,\mathcal{M}}$.  Hence, by Theorem~\ref{thm:limits2}, the whole pullback diagram belongs to $\Acc_{\omega,\mathcal{M}}$.

Now the remaining data of a lax algebra involves structural $2$-cells $\overline{x}$ and $x_0$ as on the right below.  In order to add these, we form a pullback
\begin{equation*}
\xymatrix{\TAlg_{2} \ar[d] \ar[r] & Ps(D_2,\C)^2 \ar[d]^{Ps(J_{2},\C)^2} \\
\TAlg_1 \ar[r]_{R_2} & Ps(P_1,\C)^2}
\hspace{0.7cm}
\begin{xy}
(0,0)*+{T^{2}X}="00";(20,0)*+{TX}="10";(0,-15)*+{X}="01";(20,-15)*+{TX}="11";
{\ar^{Tx} "00"; "10"};{\ar_{\mu_X} "00"; "01"};{\ar^{x} "10"; "11"};{\ar_{x} "01"; "11"};
{\ar@{=>}^{\overline{x}}(13,-5)*+{};(7,-10)*+{}};
\end{xy}
\hspace{1cm}
\begin{xy}
(0,0)*+{X}="00";(0,-15)*+{TX}="10";(20,-15)*+{X}="11";
{\ar_{\eta_{X}} "00"; "10"};{\ar_{x} "10"; "11"};{\ar^{1} "00"; "11"};
{\ar@{=>}^{x_0}(9,-7)*+{};(4,-12)*+{}};
\end{xy}
\end{equation*}
in which $R_2$ sends $(X,x)$ to the boundaries of the two $2$-cells depicted above right --- hence an object of the pullback $\TAlg_{2}$ is an object $(X,x)$ equipped with a pair of such $2$-cells, and a morphism $(f,\overline{f})$ now commutes with the additional structure in the sense of a pseudomorphism of lax $T$-algebras.  Since both $T$ and $T^2$ preserve filtered colimits, so does $R_2$ --- it follows as before that the whole diagram belongs to $ \Acc_{\omega,\mathcal{M}}$.


Finally, we form a further pullback

\begin{equation*}
\xymatrix{\TAlg_{3} \ar[d] \ar[rr] && Ps(D_2,\C)^3 \ar[d]^{Ps(J_{3},\C)^3} \\
\TAlg_2 \ar[rr]_{R_3} && Ps(P_2,\C)^3}
\end{equation*}

encoding the three equations for a lax $T$-algebra.  This time the $2$-functor $R_3$ again involves only powers of $T$ (up to $T^3$) and so preserves filtered colimits.  Arguing as before, we see that the entire diagram belongs to $ \Acc_{\omega,\mathcal{M}}$.  In particular $\LaxTAlg = \TAlg_{3}$ does so, as does the composite forgetful $2$-functor $\LaxTAlg \to \C$ obtained by composing the left legs of the various pullbacks above.  

Finally, we must show the requisite limits lift along $U:\LaxTAlg \to \C$ --- since $\LaxTAlg$ has split idempotents, it suffices to check that products, inserters and equifiers lift along $U$.  These three cases can be verified directly, by adapting the corresponding argument for strict algebras and pseudomorphisms given in Section 2 of \cite{Blackwell1989Two-dimensional}.  Since $U$ preserves filtered colimits and flexible limits and is conservative,  the requisite limit-colimit commutativity properties then follow from those of $\C$.  Thus $\C \in \Lpm$.
\end{proof}

\begin{Theorem}\label{thm:pseudoalgebra}
Let $T$ be a filtered colimit preserving $2$-monad on $\C \in \Lp_{\mathcal M}$ such that each pseudo-$T$-algebra is isomorphic in $\PsTAlg$ to a strict $T$-algebra.  Then $U:\TAlg \to \C \in \Lp_{\mathcal M}$.
\end{Theorem}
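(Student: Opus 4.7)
The plan is to reduce the statement to the pseudo-algebra case already handled by Proposition~\ref{prop:2monad}, by showing that the hypothesis upgrades the canonical inclusion $i:\TAlg \hookrightarrow \PsTAlg$ to a $2$-equivalence in $\Twocat$. Since $\PsTAlg \in \Lpm$ and the forgetful $U_{\mathrm{Ps}}:\PsTAlg \to \C$ lies in $\Lpm$ by Proposition~\ref{prop:2monad}, and since both membership in $\Lpm$ and the property of a $2$-functor belonging to $\Lpm$ are $2$-equivalence invariants, the conclusion then follows from the factorisation $U = U_{\mathrm{Ps}} \circ i$.

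The first step is to verify that $i$ is $2$-fully faithful: a strict $T$-algebra is precisely a pseudo-$T$-algebra whose structural $2$-cells are identities, and the notions of pseudomorphism and algebra $2$-cell between two strict algebras coincide whether one regards them as strict or as pseudo. The hypothesis that each pseudo-algebra is isomorphic in $\PsTAlg$ to a strict one then makes $i$ essentially surjective up to isomorphism. An explicit pseudo-inverse $j:\PsTAlg \to \TAlg$ can be produced by choosing for each pseudo-algebra $X$ a strict algebra $jX$ together with an isomorphism $\alpha_X:i(jX) \cong X$, and extending $j$ to $1$- and $2$-cells by taking unique preimages under $i$ of the conjugates $\alpha_Y^{-1}\circ(-)\circ \alpha_X$; the $2$-functoriality of $j$ and the invertible $2$-natural transformations $ij\cong 1$, $ji\cong 1$ are then formal consequences.

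It remains to confirm that $\Lpm$-membership transfers along $2$-equivalence. Accessibility, the existence and preservation of filtered colimits and of flexible limits, and the commutativity of finite flexible limits with filtered colimits are all characterised by isomorphisms of representable categories and so are $2$-equivalence invariants. For Property $\mathcal M$, the constructions $RE(-)$ and $(-)^{\atwo}$ are $2$-functorial, so $i$ induces a $2$-equivalence of the inclusions $RE(\TAlg)\hookrightarrow \TAlg^{\atwo}$ and $RE(\PsTAlg)\hookrightarrow \PsTAlg^{\atwo}$, and accessibility together with accessible embeddedness transfers. Finally, being a $2$-equivalence, $i$ preserves every $2$-categorical limit and colimit that exists, so the composite $U = U_{\mathrm{Ps}} \circ i$ inherits membership in $\Lpm$. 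The only mild obstacle is the invariance of Property $\mathcal M$ under $2$-equivalence, which is routine but requires unwinding the definitions through $RE(-)$ and $(-)^{\atwo}$ carefully.
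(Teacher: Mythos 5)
Your proposal is correct and follows exactly the paper's route: the paper's entire proof is the observation that the hypothesis makes the inclusion $\TAlg \to \PsTAlg$ a $2$-equivalence, so that the conclusion follows from Proposition~\ref{prop:2monad}. You have simply filled in the details the paper leaves implicit ($2$-fully faithfulness, construction of the pseudo-inverse from the chosen isomorphisms, and the $2$-equivalence invariance of membership in $\Lpm$, including Property $\mathcal M$), all of which are sound.
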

\begin{proof}
Since the inclusion $\TAlg \to \PsTAlg$ is a $2$-equivalence, this follows from the preceding result.
\end{proof}

An important case in which each pseudo-$T$-algebra is isomorphic to a strict one is when $T$ is a \emph{flexible} $2$-monad \cite{Kelly1972Doctrinal}.  Let us summarise a few key facts about flexible $2$-monads from \cite{Lack2007Homotopy-theoretic}.  For consistency with \emph{loc.cit.} we restrict ourselves to the case that $T$ is a finitary $2$-monad on a locally finitely presentable $2$-category $\C$.  In that case the strict monoidal $2$-category $\mathbf{End}_f(\C)$ of finitary endo-2-functors of $\C$ has the same property.  As with any complete and cocomplete $2$-category, $\mathbf{End}_f(\C)$ admits a natural model structure in which the weak equivalences and fibrations are the equivalences and isofibrations respectly.  Let $\mathbf{Mnd}_f(\C)$ denote the $2$-category of finitary $2$-monads on $\C$.  This is again locally finitely presentable whilst the forgetful $2$-functor $U:\mathbf{Mnd}_f(\C) \to \mathbf{End}_f(\C)$ is a finitary right adjoint.  The right adjoint induces a model structure on $\mathbf{Mnd}_f(\C)$ in which $f$ is a weak equivalence/fibration just when $Uf$ is one in $\mathbf{End}_f(\C)$.  

Now a $2$-monad is \emph{flexible} just when it is a cofibrant object with respect to the lifted model structure.  The flexible $2$-monads include all flexible colimits of free $2$-monads on finitary endo-2-functors of $\C$ --- it follows easily that they capture the $2$-monads for monoidal categories, and other finitary structures whose definition involves \emph{no equations between objects}.  This is explored in detail in Section 6 of \cite{Bourke2013On}.  Since for flexible $T$ each pseudo-$T$-algebra is isomorphic to a strict one we have:

\begin{Cor}
Let $T$ be a finitary flexible 2-monad on a locally finitely presentable 2-category $\C$ belonging to $\Lp_{\mathcal M}$.  Then $U:\TAlg \to \C \in \Lp_{\mathcal M}$.
\end{Cor}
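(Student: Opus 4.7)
The plan is to deduce this statement immediately from Theorem~\ref{thm:pseudoalgebra} by verifying its two hypotheses for finitary flexible $T$. Thus I would check: (i) that a finitary $2$-monad preserves filtered colimits, and (ii) that for flexible $T$, every pseudo-$T$-algebra is isomorphic in $\PsTAlg$ to a strict $T$-algebra. Once both are in hand, Theorem~\ref{thm:pseudoalgebra} immediately gives $U : \TAlg \to \C \in \Lpm$.

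For (i), finitariness of $T$ in the $\Cat$-enriched sense means that $T$ preserves conical $\omega$-filtered colimits (equivalently, by Proposition~\ref{prop:equivalent}, the underlying functor $T_0$ preserves filtered colimits in $\C_0$ and $T$ commutes with $\atwo$-powers). Thus, a fortiori, $T$ preserves filtered colimits, which is all that Theorem~\ref{thm:pseudoalgebra} requires.

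For (ii), I would appeal to the discussion of flexible $2$-monads preceding the statement: the forgetful $2$-functor $U : \mathbf{Mnd}_f(\C) \to \mathbf{End}_f(\C)$ is a finitary right adjoint, and the transferred model structure on $\mathbf{Mnd}_f(\C)$ has the flexible $2$-monads as cofibrant objects. The key property of a cofibrant object $T$ in this model structure -- established in Section~6 of \cite{Bourke2013On} -- is that the canonical comparison from $T$-algebras to pseudo-$T$-algebras is essentially surjective on objects: every pseudo-$T$-algebra $(X,x,\overline{x},x_0)$ is isomorphic in $\PsTAlg$ to a strict one. Indeed, cofibrancy of $T$ allows one to rectify the pseudo action $x : TX \to X$ (viewed through the adjunction with $\mathbf{End}_f(\C)$) by lifting along the surjective equivalence that identifies pseudo and strict $T$-structures on $X$, yielding a strict $T$-algebra isomorphic to the original pseudoalgebra.

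With (i) and (ii) verified, Theorem~\ref{thm:pseudoalgebra} applies and the corollary follows. The only genuine content is step~(ii), which is not proved in the present paper and must be imported from \cite{Bourke2013On}; everything else is bookkeeping. In particular, no new analysis of limits or accessibility is needed beyond what Theorem~\ref{thm:pseudoalgebra} already supplies, because the $2$-equivalence $\TAlg \simeq \PsTAlg$ induced by (ii) transfers the $\Lpm$-structure from $\PsTAlg$ onto $\TAlg$ along with compatibility of the forgetful $2$-functor to $\C$.
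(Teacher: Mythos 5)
Your proposal is correct and matches the paper's argument exactly: the corollary is deduced immediately from Theorem~\ref{thm:pseudoalgebra}, using that finitary $2$-monads preserve filtered colimits and that for flexible $T$ every pseudo-$T$-algebra is isomorphic to a strict one (a fact the paper likewise imports from the literature rather than proving). The only cosmetic difference is the citation for step (ii) --- the paper attributes the flexibility-implies-rectification fact to \cite{Kelly1972Doctrinal} and the surrounding model-categorical discussion to \cite{Lack2007Homotopy-theoretic}, whereas you point to \cite{Bourke2013On} --- but the substance is the same.
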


A precursor to the above result is given in Remark 7.2 of \cite{Bird1989Flexible}, where it is observed that if $T$ is flexible then, under suitable conditions on the base, the $2$-category $\TAlg$ has flexible limits, and in particular split idempotents.  

The above result subsumes the examples such as monoidal categories, bicategories and categories with finite limits of Sections~\ref{sect:monadic1},~\ref{sect:monadic2} and ~\ref{sect:monadic3} but not those structures in Section~\ref{sect:monadic4} involving exactness properties --- for the appropriate framework for this, we refer the reader to Section~\ref{sect:limittheories} below.

In fact for many $2$-monads Theorem~\ref{thm:pseudoalgebra} has a converse --- for such 2-monads $T$ the \emph{accessibility} of $\TAlg$ is equivalent to the property that each \emph{pseudoalgebra is isomorphic to a strict one}.  For brevity, we avoid maximum generality in the following result, which abstracts the situation described for strict monoidal categories in Section~\ref{sect:strictmonoidal}.

\begin{Theorem}\label{thm:limitations}
Consider a $2$-monad $T$ on $\Cat^{X}$ for $X$ a set.  Suppose that $T$ preserves pointwise bijections on objects and that each unit component $\eta_{A}:A \to TA$ is pointwise injective on objects.  
\begin{enumerate}
\item Then the inclusion $\TAlg \to \PsTAlg$ exhibits $\PsTAlg$ as the idempotent completion of $\TAlg$.
\item Furthermore, if $T$ is finitary the following are equivalent.
\begin{enumerate}
\item Each pseudoalgebra is isomorphic to a strict one.
\item Idempotents split in $\TAlg$.
\item The category $\TAlg_0$ is accessible.
\item $\TAlg \in \Lpm$.
\end{enumerate}
\end{enumerate}  
\end{Theorem}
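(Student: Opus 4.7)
The plan is to establish (1) via an explicit strictification construction, and then derive (2) formally from (1). For (1), the inclusion $i:\TAlg \to \PsTAlg$ is fully faithful on hom-categories by definition, and $\PsTAlg \in \Lpm$ by Proposition~\ref{prop:2monad}, so idempotents split in $\PsTAlg$. Identifying $\PsTAlg$ with the idempotent completion of $\TAlg$ therefore reduces to exhibiting each pseudoalgebra as a retract of a strict one in $\PsTAlg$.

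For this retract property, I would generalise the coherence construction of Section~\ref{sect:strictmonoidal}. Given a pseudoalgebra $(Y,y,\overline{y},y_0)$ in $\Cat^X$, the construction produces a strict $T$-algebra $QY$ together with a normal pseudomorphism $p:QY \to Y$ which is a pointwise surjective equivalence admitting a pseudomorphism section $s:Y \to QY$ with $ps = 1_Y$ in $\PsTAlg$. At the level of underlying $\Cat^X$-objects, $QY$ has the same pointwise object-sets as $TY$, its morphism-sets are transported from $Y$ along the action $y$ (essentially $QY(a,b) = Y(ya,yb)$ componentwise), and the strict $T$-action on $QY$ is defined using the (invertible) structural $2$-cells $\overline{y}$ and $y_0$ to reassociate and reunitise. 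The two hypotheses on $T$ are precisely what permit this: preservation of pointwise bijections on objects ensures that the strict action on $QY$ is well-defined on objects via the bijections of object-sets that $T$ sends between $TY$ and $QY$; pointwise injectivity of $\eta_Y$ on objects provides the underlying $\Cat^X$-morphism of the section $s$, whose pseudomorphism structure is assembled from $y_0$ and $\overline{y}$.

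For part (2), assume $T$ is finitary. The implication (a)$\Rightarrow$(d) is immediate: if every pseudoalgebra is isomorphic to a strict one then $i$ is a $2$-equivalence, and $\Lpm$-membership transfers from $\PsTAlg$ (Proposition~\ref{prop:2monad}) to $\TAlg$ since the defining properties are invariant under $2$-equivalence. The implication (d)$\Rightarrow$(c) is trivial from the definition of $\Lpm$. For (c)$\Rightarrow$(b), accessible categories always have split idempotents, since the splitting of an idempotent is realised as the colimit of the countable sequence on the idempotent, and accessible categories admit such colimits. Finally, for (b)$\Rightarrow$(a), part~(1) presents $\PsTAlg$ as the idempotent completion of $\TAlg$; if idempotents already split in $\TAlg$, then $\TAlg$ is its own idempotent completion, and by uniqueness the inclusion $i$ must be a $2$-equivalence, so every pseudoalgebra is isomorphic to a strict one.

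The main obstacle is the explicit verification that the strictification $QY$ is a genuine strict $T$-algebra and that $p$ and $s$ are mutually retracting pseudomorphisms. This is essentially a MacLane-type coherence computation carried out uniformly for $T$, and the two stated hypotheses on $T$ are what reduce it to the formal-word argument already used in the monoidal case: strictification only rewrites morphism-sets while leaving the object-level combinatorics of the $T$-action intact, and injectivity of $\eta$ on objects supplies a canonical section on the nose.
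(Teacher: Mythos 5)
There is a genuine gap in your treatment of Part (1), located exactly where the hypothesis on $\eta$ has to do its work. Your strictification $QY$ (objects those of $TY$, homs $QY(a,b)=Y(ya,yb)$) is the right object --- it is the one produced by Theorem 4.10 of Lack's \emph{Codescent objects and coherence}, which the paper invokes via the enhanced (pointwise bijective on objects, pointwise fully faithful) factorisation system; this is where the hypothesis that $T$ preserves pointwise bijections on objects is used. But your claimed strict retraction $p\circ s=1_Y$ fails for a general pseudoalgebra. The only candidate section $s:Y\to QY$ acts on objects by $\eta_Y$, so $p\circ s$ acts on objects by $y\circ\eta_Y$, and for a non-normal pseudoalgebra this is only \emph{isomorphic} to the identity via $y_0$, not equal to it. (The monoidal case in Section~\ref{sect:strictmonoidal} hides this because the left-bracketing $l$ there satisfies $l[a]=a$ on the nose, i.e.\ that strictification is normal by construction.) Without an equality you only conclude that every pseudoalgebra is \emph{equivalent} to a strict one, which does not yield the idempotent-completion statement. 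The missing idea is a normalisation step: because $\eta_Y$ is pointwise injective on objects it is an isocofibration in $\Cat^{X}$ (Joyal--Street), so the isomorphism $y_0$ can be extended along $\eta_Y$ to transport $(y,y_0,\overline y)$ to an isomorphic \emph{normal} pseudoalgebra $(y',1,\overline y{}')$; only for normal pseudoalgebras does the unit of Lack's adjunction become a split monomorphism, exhibiting $Y$ as a genuine retract of $QY$ in $\PsTAlg$. Your sentence attributing the hypothesis on $\eta$ to ``supplying the underlying morphism of the section'' misses this, and it is the crux of the proof.

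Your Part (2) matches the paper: (a)$\Rightarrow$(d) is Theorem~\ref{thm:pseudoalgebra}, (d)$\Rightarrow$(c)$\Rightarrow$(b) is formal, and (b)$\Rightarrow$(a) via uniqueness of idempotent completions is fine (the paper argues equivalently by splitting the idempotent in $\TAlg$ and using that the inclusion preserves the splitting). One small imprecision: a $\lambda$-accessible category need not have colimits of countable chains, so for (c)$\Rightarrow$(b) you should split the idempotent as the colimit of a $\lambda$-indexed chain of copies of $e$ (a $\lambda$-directed diagram), or simply cite the standard fact that accessible categories are idempotent-complete.
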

\begin{proof}
The pointwise bijections on objects and pointwise fully faithful morphisms on $\Cat^X$ form an \emph{enhanced} factorisation system on $\Cat^{X}$ in the sense of \cite{Lack2002Codescent}.  Since $T$ preserves the left class by assumption, Theorem 4.10 of \emph{loc.cit.} ensures that the inclusion $j:\TAlg_s \to \PsTAlg$ has a left adjoint with each unit component $q_A:\mathbf A \to \mathbf{A}^{\prime}$ an equivalence.  Let us write $\mathbf A= (A,a,a_0,\overline{a})$ for the pseudoalgebra and $k:\mathbf{A}^{\prime} \to \mathbf A \to \mathbf A$ for the equivalence inverse.  Examining the proof in \emph{loc.cit} we see that the isomorphism $k \circ q_A \cong 1_{\mathbf A}$ is given by the structural isomorphism $a_0:a \circ \eta_{A} \cong 1_{A}$.  Therefore if $a_0 = 1$ --- that is, if $\mathbf A$ is a \emph{normal pseudoalgebra} --- then $\mathbf A$ is, in fact, a \emph{retract} of the strict algebra $\mathbf{A}^{\prime}$.  Therefore, if we can show that each pseudoalgebra is isomorphic to a normal one, it will follow that each pseudoalgebra is a retract of a strict one --- in particular proving (1) above.  

For this, we use the assumption that each unit component $\eta_{A}:A \to TA$ is pointwise injective on objects.  The value of this is that, by Theorem 2 of \cite{Joyal1993Pullbacks}, the pointwise injections on objects are precisely the isocofibrations in $\Cat^{X}$ --- thus, there exists $a^{\prime}:TA \to A$ with $\theta:a^{\prime} \cong a$ such that $a \circ \eta_{A} = 1$ and $\theta \circ \eta_{A} = a_0$.  Transporting the pseudoalgebra structure along the isomorphism $\theta$ yields a unique \emph{normal} pseudoalgebra $(A,a^{\prime},1,\overline{a}^{\prime})$ such that $(1_A,\theta):(A,a^{\prime},1,\overline{a}^{\prime}) \to (A,a,a_0,\overline{a})$ is an isomorphism of pseudoalgebras.  In particular, each pseudoalgebra is isomorphic to a normal pseudoalgebra, as required.

For Part (2) suppose (a) holds.  By Theorem~\ref{thm:pseudoalgebra} this implies (d) whilst trivially $(d) \implies (c) \implies (b)$.  Supposing $(b)$ then we have that each pseudoalgebra $\mathbf A$ is a retract of a strict algebra $\mathbf B$; thus splitting the corresponding idempotent on $\mathbf B$ produces an object $\mathbf C \in \TAlg$.  Since the inclusion $\TAlg \to \PsTAlg$ preserves idempotent splittings, therefore $\mathbf B \cong \mathbf A$, proving (a).
\end{proof}

\begin{Example}
Let us describe another example to which Theorem~\ref{thm:limitations} applies.  Given a small $2$-category $\A$ let $ob\A$ denote its underlying set of objects.  Restriction and left Kan extension along the inclusion $ob\A \to \A$ induces a $2$-monad $T$ on $\Cat^{ob\A}$ with $\TAlg \cong Ps(\A,\Cat)$ and $\PsTAlg \cong Hom(\A,\Cat)$ the $2$-category of pseudofunctors, pseudonatural transformations and modifications -- see 6.6 of \cite{Blackwell1989Two-dimensional} and Remark 7.2 of \cite{Bird1989Flexible}.  This $2$-monad has the simple formula $(TX)(a) = \Sigma_{b \in \A}\A(b,a) \times X(b)$ with unit $\eta_a:X(a) \to TX(a)$ sending $x$ to the pair $(id_a,x)$.  It follows from this description that $T$ satisfies the two hypotheses of Theorem~\ref{thm:limitations}.  Therefore $Hom(\A,\Cat) \in \Lpm$ is the idempotent completion of $Ps(\A,\Cat)$, with the latter $2$-category having split idempotents just when each pseudofunctor from $\A$ to $\Cat$ is isomorphic to a $2$-functor.  Example 6.2 of \cite{Bird1989Flexible} gives an $\A$ for which this condition is not met --- namely, when $\A$ is the free $2$-category containing a retraction, or equivalently, the free split idempotent.  Of course, the free split idempotent is not cellular.
\end{Example}

\section{The accessibility of isofibrations and fibrations in a 2-category}\label{sect:isofibrations}

The three classes of morphisms in a $2$-category $\C$ --- isofibrations, equivalences and surjective equivalences --- are closely connected.  Indeed in a sufficiently complete and cocomplete $2$-category, they form the fibrations, weak equivalences and trivial fibrations of a model structure \cite{Lack2007Homotopy-theoretic}.  From this perspective, it may seem somewhat obscure that Property $\mathcal M$ concerns only the accessibility of the surjective equivalences and says nothing of the other two classes.  The following result addresses this concern.

\begin{Prop}\label{prop:isofibrations}
Let $\C \in \Lp$ and let $IsoFib(\C)$ and $Equiv(\C)$ denote the full sub-$2$-categories of $\C^{\atwo}$ containing the isofibrations and equivalences respectively.  Then the following are equivalent.
\begin{enumerate}
\item $RE(\C) \hookrightarrow \C$ belongs to $\Lp$ --- that is $\C$ satisfies Property $\mathcal M$.
\item Both $IsoFib(\C) \hookrightarrow \C^{\atwo}$ and $Equiv(\C) \hookrightarrow \C^{\atwo}$ belong to $\Lp$.
\end{enumerate}
\end{Prop}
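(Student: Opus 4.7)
The implication (2)$\Rightarrow$(1) is immediate and would come first.  Since a morphism is a surjective equivalence precisely when it is both an equivalence and an isofibration, $RE(\C) = IsoFib(\C) \cap Equiv(\C)$ as full sub-$2$-categories of $\C^{\atwo}$.  Both $IsoFib(\C) \hookrightarrow \C^{\atwo}$ and $Equiv(\C) \hookrightarrow \C^{\atwo}$ are full and replete (closure of the respective classes under isomorphism in $\C^{\atwo}$ is routine), and so are discrete isofibrations in $\Twocat$.  Realising $RE(\C)$ as the pullback of these two inclusions, Proposition~\ref{prop:limits} then delivers the inclusion $RE(\C) \hookrightarrow \C^{\atwo}$ in $\Lp$.

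For the nontrivial direction (1)$\Rightarrow$(2) the plan is to realise each of $Equiv(\C)$ and $IsoFib(\C)$ as the pullback of $RE(\C) \hookrightarrow \C^{\atwo}$ along a suitable $\Lp$-morphism $\C^{\atwo} \to \C^{\atwo}$ built from finite flexible limits.  Starting with $Equiv(\C)$, I would use the mapping-path factorisation $f = q_f \circ s_f$ afforded by the pseudolimit-of-arrows construction of Section~\ref{sect:squares}.  The map $s_f$ is always an equivalence (as a section of the surjective equivalence $p_f$), while $q_f$ is always an isofibration---verified in $\Cat$ from Example~\ref{ex:pseudolimit} and extended representably.  Two-out-of-three for equivalences then gives that $f$ is an equivalence iff $q_f$ is, and combined with $q_f$ being an isofibration, this says $f$ is an equivalence iff $q_f$ is a surjective equivalence.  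The $2$-functor $Q : \C^{\atwo} \to \C^{\atwo}$, $f \mapsto q_f$, is entirely a finite flexible limit construction and so belongs to $\Lp$; pulling back $RE(\C) \hookrightarrow \C^{\atwo}$ along $Q$ via Proposition~\ref{prop:limits} then realises $Equiv(\C) \hookrightarrow \C^{\atwo}$ in $\Lp$.

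For $IsoFib(\C)$ the analogous construction is the $2$-functor $F : \C^{\atwo} \to \C^{\atwo}$ sending $f : A \to B$ to the canonical comparison $f^I : A^I \to P_f$, where $A^I$ is the power by the free isomorphism $I$ and $f^I$ is induced by the cone $(d_0,\, f d_1,\, f \lambda)$ with $\lambda : d_0 \cong d_1$ the universal invertible $2$-cell on $A^I$.  Since both $(-)^I$ and $P_{(-)}$ are finite flexible limits, $F$ belongs to $\Lp$.  The key claim, to be verified representably by reducing to $\Cat$, is that $f$ is an isofibration iff $f^I$ is a surjective equivalence: a strict section $s : P_f \to A^I$ of $f^I$ is precisely a cleavage of $f$ (given $(a,\, \alpha : fa \cong b,\, b) \in P_f$, the value $s(a, \alpha, b)$ is a lift of $\alpha$ to an iso $a \cong a'$ with $fa' = b$), and the required isomorphism $s \circ f^I \cong 1_{A^I}$ is then forced by the universal property of $A^I$.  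With this identification $IsoFib(\C) = F^{-1}(RE(\C))$, and one concludes exactly as for $Equiv(\C)$.

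The main technical hurdle will be the pseudolimit characterisation of isofibrations, specifically showing that $f^I$ is genuinely a surjective equivalence (not merely an equivalence) when $f$ is an isofibration---but once representables reduce the problem to $\Cat$, this amounts to producing a cleavage, which is standard.  A subtle but recurring point is that $RE(\C) \hookrightarrow \C^{\atwo}$ is a discrete isofibration exactly because the retract equivalences form a \emph{replete} subcategory of $\C^{\atwo}$, so that the pullback-of-isofibrations clause in Proposition~\ref{prop:limits} genuinely applies in each of the pullback arguments above.
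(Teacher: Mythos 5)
Your proposal is correct and matches the paper's proof essentially step for step: (2)$\Rightarrow$(1) via the pullback $RE(\C)=IsoFib(\C)\cap Equiv(\C)$, and (1)$\Rightarrow$(2) by pulling back $RE(\C)\hookrightarrow\C^{\atwo}$ along $f\mapsto q_f$ and $f\mapsto t_f\colon A^I\to Pf$, with both legs built from finite flexible limits and the characterisations verified representably in $\Cat$. The paper leaves the isofibration characterisation as ``simple to see''; your cleavage argument is the intended justification.
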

\begin{proof}
Since a morphism is a surjective equivalence just when it is a fibration and an equivalence, we have the pullback $RE(\C) = IsoFib(\C) \cap Equiv(\C)$ of the two morphisms in (2).  If both belong to $\Lp$ then, as the pullback of a pair of isofibrations in $\Lp$, the pullback square belongs to $\Lp$ by Proposition~\ref{prop:limits}.  Thus $(2 \implies 1)$.  

For the converse, we use that $f:A \to B$ is an equivalence if and only if $q_f:Pf \to B$ is a surjective equivalence.  Indeed, we have $f=q_f \circ r_f$ with $r_f$ an equivalence and $q_f$ an isofibration by Proposition~\ref{prop:properties}.  Thus by the two from three property for equivalences, $f$ is an equivalence just when $q_f$ is both an equivalence and an isofibration --- that is, a surjective equivalence --- as required.  Thus we have a pullback square as on the left below.
\begin{equation*}
\xymatrix{
Equiv(\C) \ar[d] \ar[rrr] &&& RE(\C) \ar[d]^{} \\
\C^{\atwo} \ar[rrr]_{(f:A \to B) \mapsto (q_f:Pf \to B)} &&& \C^{\atwo}}
\hspace{1cm}
\xymatrix{
Isofib(\C) \ar[d] \ar[rrr] &&& RE(\C) \ar[d]^{} \\
\C^{\atwo} \ar[rrr]_{(f:A \to B) \mapsto (t_f:A^{I} \to Pf)} &&& \C^{\atwo}}
\end{equation*}
Secondly, letting $I$ denote the free isomorphism it is simple to see $f$ is an isofibration if and only if the induced map $t_f:A^{I} \to Pf$ is a surjective equivalence.  We thus have a pullback square as on the right above.  Since finite powers and pseudolimits of arrows commute with flexible limits and filtered colimits in $\C$, the bottom leg of each pullback square belongs to $\Lp$.  Assuming (1) the right vertical morphism does too.  Since it is also an isofibration, both pullback squares belong to $\Lp$ by Proposition~\ref{prop:limits}, thus proving (2).
\end{proof}

We now use the above to investigate fibrations in a $2$-category $\C$ --- see \cite{Street1972Fibrations, Weber, Loregian2019Categorical}.  We presume that the reader is familiar with fibrations in $\Cat$ --- briefly, these are functors $f:A \to B$ in which each morphism $a \to fb \in \B$ admits a lift to a \emph{$j$-cartesian morphism} in $A$.   A morphism of fibrations is a commutative square $(r,s):j \to k$ such that $r$ takes $j$-cartesian morphisms to $k$-cartesian morphisms.

In a general $2$-category $\C$ a morphism $j:A \to B$ is said to be a \emph{fibration} if it satisfies two conditions.  Firstly,
\begin{itemize}
\item[(1)] For each $X \in \C$ the functor $\C(X,j):\C(X,A) \to \C(X,B)$ is a fibration in $\Cat$.
\end{itemize}
In this context a $2$-cell $\lambda:u \Rightarrow v \in \C(X,A)$ is said to be $j$-cartesian if it is a cartesian morphism for the fibration $\C(X,j):\C(X,A) \to \C(X,B)$.  This is used to formulate the second condition.
\begin{itemize}
\item[(2)] Cartesian $2$-cells $\lambda:u \Rightarrow v \in \C(X,A)$ are preserved by whiskering by morphisms $Y \to X$ --- that is, for all $Y \to X \in \C$ the induced square $\C(X,j) \to \C(Y,j)$ is a morphism of fibrations.  
\end{itemize}
Now $Fib(\C) \hookrightarrow \C^{\atwo}$ is defined to be the locally full sub-$2$-category whose objects are fibrations and whose morphisms are the commutative squares taking cartesian $2$-cells to cartesian $2$-cells.   
\begin{Prop}
Let $\C \in \Lpm$.  Then $Fib(\C) \hookrightarrow \C^{\atwo} \in \Lp$.
\end{Prop}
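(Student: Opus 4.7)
The plan is to extend the strategy of Proposition~\ref{prop:isofibrations} by realising $Fib(\C) \hookrightarrow \C^{\atwo}$ as a pullback in $\Twocat$ of an isofibration in $\Lp$ along an $\Lp$-morphism. The required input is Street's characterisation of fibrations in a 2-category~\cite{Street1972Fibrations}: in any 2-category $\C$ with comma objects, $j:A \to B$ is a fibration if and only if a canonical 1-cell $k_j$ built from $j$ via the comma object admits a right adjoint with invertible unit, i.e.\ is the left-adjoint arrow of a coreflection. A natural choice is $k_j:A \to B/j$ sending $a$ to $(ja,1_{ja},a)$; strictly speaking the adjunction lives in $\C/B$, but by tracking the projection $p_B:B/j \to B$ alongside $k_j$ one can reformulate the characterisation purely in terms of adjoints in $\C$.

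Since comma objects and the induced 1-cells are finite flexible limits, the assignment $j \mapsto k_j$ extends to a 2-functor $K:\C^{\atwo} \to Ps(\atwo,\C)$ belonging to $\Lp$. As recalled in Section~\ref{sect:cellular2}, $\Coref$ is cellular and the 2-functor $f:\atwo \to \Coref$ selecting the left adjoint is cellular, so by Theorem~\ref{thm:cellular} the restriction $Ps(f,\C):Ps(\Coref,\C) \to Ps(\atwo,\C)$ is an isofibration in $\Lpm$. Forming the pullback
\begin{equation*}
\xymatrix{
\widetilde{\mathrm{Fib}}(\C) \ar[d] \ar[r] & Ps(\Coref,\C) \ar[d]^{Ps(f,\C)} \\
\C^{\atwo} \ar[r]^-{K} & Ps(\atwo,\C)
}
\end{equation*}
and invoking Proposition~\ref{prop:limits} produces a pullback square in $\Lp$, whose left projection $\widetilde{\mathrm{Fib}}(\C) \to \C^{\atwo}$ is an isofibration in $\Lp$.

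Objects of $\widetilde{\mathrm{Fib}}(\C)$ are \emph{cloven} fibrations, and morphisms are commutative squares together with an iso 2-cell witnessing compatibility with the cleavages at the right-adjoint components. The forgetful 2-functor $\widetilde{\mathrm{Fib}}(\C) \to Fib(\C)$ is then a 2-equivalence: essentially surjective since every fibration admits a cleavage, and fully faithful since by the 2-categorical mate calculus such a compatibility 2-cell exists precisely when the underlying square preserves cartesian 2-cells, in which case it is uniquely determined. Composing with the projection then exhibits the inclusion $Fib(\C) \hookrightarrow \C^{\atwo}$ as a morphism of $\Lp$.

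The main obstacle lies in the bookkeeping: formulating Street's characterisation of fibrations intrinsically in $\C$ rather than in the slice $\C/B$ (so that the above pullback genuinely captures fibrations and not merely vertical left-adjoints-in-$\C$), and then verifying via the mate correspondence that morphisms of the pullback correspond exactly to cartesian-preserving squares. Both are standard 2-categorical exercises but require care.
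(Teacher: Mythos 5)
Your overall strategy -- realising $Fib(\C)$ as a pullback of an adjunction classifier $Ps(-,\C)$ along a comma-object construction, and then invoking Theorem~\ref{thm:cellular} and Proposition~\ref{prop:limits} -- is the same as the paper's. But the step you defer as ``bookkeeping'' is in fact the crux, and as stated your pullback does not capture $Fib(\C)$. The condition ``$k_j:A \to B/j$ admits a right adjoint with invertible unit \emph{in $\C$}'' is strictly weaker than being a fibration: Street's characterisation requires the adjunction to live over $B$, and dropping that condition yields only the isomorphism-invariant (Street) notion of fibration rather than the one defined in this section, where cartesian lifts of $\alpha:f\circ g \cong h$ must land on the nose over $\alpha$. (Already for $j:1 \to I$ the inclusion of an endpoint of the free isomorphism, $k_j$ has a right adjoint with invertible unit although $j$ is not a fibration.) So the object you construct is not $Fib(\C)$, and ``tracking the projection $p_B$ alongside $k_j$'' is not something that can be expressed as a pullback of $Ps(\Coref,\C) \to Ps(\atwo,\C)$ without further work.

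The paper resolves this in two moves that your argument is missing. First, it replaces $k_j$ by the canonical map $t_j:A^{\atwo} \to B/j$, for which Theorem 3.1.3 of \cite{Loregian2019Categorical} (a variant of Street's result) gives a genuinely slice-free characterisation: $j$ is a fibration iff $t_j$ has a right adjoint with \emph{identity} counit. Second, since the classifier $Ps(\Ref,\C)$ can only detect right adjoints with \emph{invertible} counit, one must bridge the gap between invertible and identity: this is done by restricting the base of the pullback to $Isofib(\C) \hookrightarrow \C^{\atwo}$, observing that $t_j$ is itself an isofibration whenever $j$ is, so that an invertible counit can be rectified to an identity one. This is where Proposition~\ref{prop:isofibrations} -- which you never invoke -- enters essentially, to guarantee that $Isofib(\C) \hookrightarrow \C^{\atwo}$ belongs to $\Lp$. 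Your remarks on morphisms (mates detecting preservation of cartesian $2$-cells) are fine and implicit in the paper as well, but without the two corrections above the proof does not go through.
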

\begin{proof}
Let $\C \in \Lpm$.  Given $j:A \to B \in \C$ we can form the oplax limit $B/j$ of $j$.  This is the universal diagram of shape
$$\xy
(0,0)*+{B/j}="a0"; (-15,-15)*+{A}="b0";(15,-15)*+{B}="c0";
{\ar_{p_{j}} "a0"; "b0"}; 
{\ar^{q_{j}} "a0"; "c0"}; 
{\ar_{j} "b0"; "c0"}; 
{\ar@{=>}^{\lambda_j}(2,-6)*+{};(-2,-11)*+{}};
\endxy$$
and equally the comma object $1_B/j$.  From Theorem 3.1.3 of \cite{Loregian2019Categorical}, the map $j$ is a fibration just when the canonical map $A^{\atwo} \to B/j$ has a right adjoint with identity counit.  (This is a slight variant of a result of Street \cite{Street1972Fibrations}.)  

Now each fibration is an isofibration, and if $j$ is an isofibration then the map $t_j:A^{\atwo} \to B/j$ induced by the identity $2$-cell $1_j:j \Rightarrow j.1_A$ is easily seen to be an isofibration too.  Now if an isofibration has a right adjoint with \emph{invertible} counit, then using its isomorphism lifting property we can find a right adjoint with \emph{identity counit}.  Combining the above, we obtain that
\begin{itemize}
\item the morphism $j$ is a fibration just when it is an isofibration and the canonical map $t_j:A^{\atwo} \to B/j$ has a right adjoint with invertible counit.
\end{itemize}
Therefore we have a pullback square as below
\begin{equation*}
\xymatrix{
Fib(\C) \ar[d] \ar[rrrrrr] &&&&&& Ps(\Ref,\C) \ar[d]^{Ps(f,\C)} \\
Isofib(\C) \ar@{{(}->}[r] & \C^{\atwo} \ar[rrrr]^{(j:A \to B) \mapsto (t_j:A^{\atwo} \to B/j)} &&&& \C^{\atwo} \ar@{{(}->}[r] & Ps(\atwo,\C)}
\end{equation*}
in which the right vertical leg selects the left adjoint of a reflection.  This leg is an isofibration belonging to $\Lpm$ by Theorem~\ref{thm:cellular}.  Turning to the lower horizontal leg we have that $Isofib(\C) \hookrightarrow \C^{\atwo}$ belongs to $\Lp$ by Proposition~\ref{prop:isofibrations}.  The second component of the bottom leg belongs to $\Lp$ since powers by $\atwo$ and comma objects are finite flexible limits and thus commute with flexible limits and filtered colimits in $\C$.  Finally the inclusion $\C^{\atwo} \hookrightarrow Ps(\atwo,\C)$ preserves flexible limits and filtered colimits since they are pointwise in both of these $2$-categories.  Therefore by Proposition~\ref{prop:limits} the pullback belongs to $\Lp$.

%
%
\end{proof}

\section{Finite limit 2-theories}\label{sect:limittheories}

In this final section, we briefly consider two-dimensional limit theories, which are a notion of theory general enough to capture all of the structures considered in the paper thus far.  Whilst the models and strict morphisms of limit-theories can be defined over any base of enrichment \cite{Kelly1982Structures}, the notions of pseudo and lax morphism are somewhat more subtle.  Building on work of Lack and Power \cite{Lack2007Lawvere} on two-dimensional Lawvere theories, we give definitions of pseudo and lax morphisms in the more general context of a pie limit theory.  Using this, we state a result on the accessibility of the $2$-category of models and pseudomaps, to be proven in a followup paper on two-dimensional limit theories.

\subsection{Models and their homomorphisms}
Let $\Phi$ be a small class of weights.  By a $\Phi$-limit theory we simply mean a small $2$-category with chosen $\Phi$-limits.  Let $\mathbb T$ be a $\Phi$-limit theory and $\C$ a $2$-category admitting $\Phi$-limits.  A model of $\mathbb T$ in $\C$ is a $2$-functor $\mathbb T \to \C$ preserving $\Phi$-limits, whilst a \emph{strict morphism} of models is a $2$-natural transformation.  

\begin{Example}
The case of the weaker kinds of morphisms is more subtle and we will not define them for fully general $\Phi$, but begin here by recalling how it goes for Lawvere $2$-theories and the subtlety involved.  A \emph{Lawvere $2$-theory} is an identity on objects finite product preserving $2$-functor $J:\mathbb F^{op} \to \mathbb T$ where $\mathbb F$ is a skeleton of finite sets --- we take its objects to be the natural numbers.

To see the issue concerning weak morphisms, let us consider the Lawvere theory $J:\mathbb F^{op} \to \mathbb T$ for monoids.  In addition to the two product projections $\Pi_1,\Pi_2:2 \rightrightarrows 1 $ coming from $\mathbb F^{op}$,  $\mathbb T$ also contains a morphism $m:2 \to 1$.  Then a finite product preserving functor $X:\mathbb T \to \Cat$ gives a strict monoidal category $X(1)$ with multiplication $X(m):X(2) = X(1)^{2} \to X(1)$.  Now a lax morphism $f:X \to Y$ of models should amount to a \emph{lax monoidal functor}, which has one of its structure constraints given by a natural transformation as on the left below.
\begin{equation*}
\begin{xy}
(0,0)*+{X(1)^{2}}="00";(20,0)*+{Y(1)^{2}}="10";(0,-15)*+{X(1)}="01";(20,-15)*+{Y(1)}="11";
{\ar^{f(1)^{2}} "00"; "10"};{\ar_{X(m)} "00"; "01"};{\ar^{Y(m)} "10"; "11"};{\ar_{f(1)} "01"; "11"};
{\ar@{=>}^{}(13,-5)*+{};(7,-10)*+{}};
\end{xy}
\hspace{0.5cm}
\begin{xy}
(00,0)*+{X(1)^{2}}="00";(20,0)*+{Y(1)^{2}}="10";(00,-15)*+{X(1)}="01";(20,-15)*+{Y(1)}="11";
{\ar^{f(2)} "00"; "10"};{\ar_{X(m)} "00"; "01"};{\ar^{Y(m)} "10"; "11"};{\ar_{f} "01"; "11"};
{\ar@{=>}^{\overline{f}_{m}}(13,-5)*+{};(7,-10)*+{}};
\end{xy}
\hspace{0.5cm}
\begin{xy}
(00,0)*+{X(1)^{2}}="00";(25,0)*+{Y(1)^{2}}="10";(00,-15)*+{X(1)}="01";(25,-15)*+{Y(1)}="11";
{\ar^{f(2)=f(1)^{2}} "00"; "10"};{\ar@<-1ex>_{X(\Pi_{1})} "00"; "01"};{\ar@<1ex>^{X(\Pi_2)} "00"; "01"};{\ar@<-1ex>_{Y(\Pi_1)} "10"; "11"};{\ar@<1ex>^{Y(\Pi_2)} "10"; "11"};{\ar_{f} "01"; "11"};
\end{xy}
\end{equation*}
This suggests defining a lax morphism to be a \emph{lax natural transformation} $f:X \to Y$.  However, this is \emph{not} quite right, since a lax transformation merely gives a $2$-cell as in the middle above but does not force $f(2) = f(1)^{2}$.  For that equation to hold, one requires the two naturality squares above right to commute on the nose.  In fact, the correct notion of \emph{lax morphism} $f:X \to Y$ is a lax natural transformation which restricts along $J:\mathbb F^{op} \to \mathbb T$ to a $2$-natural transformation \cite{Lack2007Lawvere}; equivalently, a lax natural transformation whose $2$-cell components are identities at product projections.  
\end{Example}

Now let us suppose that $\Phi$ is a small class of \emph{pie weights}.  We will define pseudo and lax morphisms for $\Phi$-limit theories.  The key to our approach is the characterisation of pie weights of Power and Robinson \cite{Power1991A-characterization}, which we now recall.  Let $W:\J \to \Cat$ be a weight.  This induces a functor
\begin{equation*}
\xymatrix{
\J_0 \ar[r]^{W_0} & \Cat_0 \ar[r]^{ob} & \Set .
}
\end{equation*}
whose category of elements $el(ob \circ W_0)$ has objects given by pairs $(j \in \J, x \in Wj)$ and morphisms $f:(j,x) \to (k,y)$ given by morphisms $f:j \to k \in \J$ such that $Wf(x) = y$.
\begin{Theorem}[Power and Robinson]
$W$ is a pie weight if and only if each connected component of $el(ob \circ W_0)$ has an initial object.
\end{Theorem}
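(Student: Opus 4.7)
My plan is to exploit the cellular characterization of pie weights recalled in the excerpt: $W:\J \to \Cat$ is pie iff it is $I^*$-cellular in $[\J,\Cat]$, where $I^* = \{i \times \J(a,-) : i \in I,\, a \in \mathrm{ob}\,\J\}$ and $I = \{J_0, J_1, \nabla\}$ consists of the generating cofibrations of the natural model structure on $\Cat_0$. Write $P(W)$ for the property that every connected component of $el(ob\circ W_0)$ admits an initial object.

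For the forward implication, I would induct on the cellular presentation of $W$, starting from $W = \varnothing$, for which the claim is vacuous. For the inductive step: a pushout along $J_0 \times \J(a,-) = (\varnothing \to \J(a,-))$ adjoins $\J(a,-)$ as a coproduct summand, contributing a fresh connected component to $el(ob \circ W_0)$ isomorphic to the coslice $a/\J_0$, whose initial object is $(a,1_a)$. Pushouts along the other two generators $J_1 \times \J(a,-)$ and $\nabla \times \J(a,-)$ modify only the $1$-cells of the hom-categories $W(j)$ and not their object sets, leaving $el(ob \circ W_0)$ unchanged. Since $el$ commutes with filtered colimits of $\Set$-valued functors and the cellular tower consists of inclusions, property $P$ passes through transfinite composites.

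For the converse, choose for each connected component $C$ of $el(ob \circ W_0)$ an initial object $(a_C, x_C)$, and set $W^{(0)} := \coprod_C \J(a_C,-)$. By Yoneda the family $(x_C)_C$ corresponds to a $2$-natural transformation $\xi: W^{(0)} \to W$, and the initial object hypothesis ensures $\xi$ is bijective on objects at every $j$: each $y \in W(j)$ belongs to a unique component $C$, and the unique morphism $(a_C, x_C) \to (j,y)$ in $el(ob \circ W_0)$ is precisely a unique $f:a_C \to j$ with $W(f)(x_C) = y$. To finish, I would realise $W$ as a relative $I^*$-cellular extension of $W^{(0)}$ using only cells of the remaining two shapes: for each generating morphism of a hom-category $W(j)$ attach a $J_1 \times \J(j,-)$ cell along the source-target data supplied by the already-constructed bijection on objects, and for each relation among $1$-cells attach a $\nabla \times \J(j,-)$ cell. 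Since both types of pushout preserve the underlying object assignments, every stage of the tower remains bijective on objects at each $j$, and a cofinality argument identifies the transfinite colimit with $W$.

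The main obstacle lies in making the converse direction precise: one must organise a presentation of each hom-category $W(j)$ by generators and relations in a way that is compatible with the $\J$-action, so that the resulting cellular tower actually terminates at $W$ rather than at some freer expansion. This bookkeeping is substantially eased by the observation that bijectivity-on-objects is preserved by all permitted cell attachments, so that the task reduces to arranging that every morphism and every equation of $W$ is realised by some $J_1 \times \J(j,-)$ or $\nabla \times \J(j,-)$ pushout, an exercise that is laborious but routine once the framework is set up.
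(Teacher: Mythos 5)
The paper does not prove this statement: it is recalled verbatim from Power and Robinson \cite{Power1991A-characterization} and used as a black box, so there is no in-paper argument to compare against. Your route---deducing the criterion from the identification of pie weights with the $I^{*}$-cellular objects of the projective model structure---is genuinely different from the original, which is a direct $2$-categorical analysis of how $W$-limits are built from products, inserters and equifiers. Granting your starting point, both directions do go through: the forward induction is sound because all three generating cofibrations are pointwise injective on objects, the $J_0\times\J(a,-)$ cells necessarily attach as disjoint coproduct summands (their domain is initial) contributing fresh components $a/\J_0$ with initial object $(a,1_a)$, and the $J_1$- and $\nabla$-cells leave $ob\circ W_0$ literally unchanged, so components never merge along the tower. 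For the converse, the ``laborious bookkeeping'' you worry about evaporates if you simply take \emph{all} morphisms of all the $W(j)$ as generators and \emph{all} parallel pairs with equal image as relations: since $\xi\colon W^{(0)}\to W$ is pointwise bijective on objects, a single pushout of a coproduct of $J_1\times\J(j,-)$'s makes the comparison pointwise full, and a single further pushout of $\nabla\times\J(j,-)$'s quotients by exactly the pointwise kernel congruence, landing on $W$ in two stages. No transfinite or cofinality argument is needed.

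Two substantive cautions. First, there is a real risk of circularity: the statement ``$W$ is pie iff $\varnothing\to W$ is $I^{*}$-cellular'', which you take as your definition of pie, is itself a nontrivial theorem (cited in the paper to \cite{Lack2007Homotopy-theoretic}), and its proof in the literature is intertwined with the Power--Robinson criterion. If your intent is to \emph{reprove} Power--Robinson, you must either supply an independent proof of the cellular characterisation or accept that you have only shown the two characterisations agree. Second, keep the distinction between \emph{cellular} and \emph{cofibrant} firmly in view: pie corresponds to cellular on the nose, while flexible corresponds to cofibrant, i.e.\ retract of cellular. It is tempting to argue the converse by observing that pointwise bijective-on-objects maps have the left lifting property against pointwise fully faithful maps (by orthogonality of the (bo,\,ff) factorisation system, applied pointwise), but that only exhibits $\xi$ as a cofibration and hence only proves $W$ flexible. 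Your explicit two-stage cell attachment is the correct fix, and it is worth saying so rather than leaving it as an unexamined ``routine exercise''.
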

Henceforth, by a pie weight we will mean a pair $(W,\Omega_W)$ where $W$ is a pie weight and $\Omega \subseteq el(ob \circ W_0)$ is a witnessing set of such initial objects --- exactly one from each connected component.  Our definition of pseudo and lax morphisms below depends on the choice of such a set.

Suppose that $\Phi$ is a small class of pie weights and consider a $\Phi$-limit theory $\mathbb T \in \ff_s$.  Then given a weight $W \in \Phi(J)$ and diagram $D:J \to \mathbb T$ we have the specified limit $L=\{W,D\}$ and limit projections $p_{(x,j)}:L \to Dj$.   We call these \emph{initial limit projections} if $(x,j) \in \Omega_W$.

\begin{Def}
Let $\mathbb T$ be a $\Phi$-limit theory and $\C$ be a $2$-category with $\Phi$-limits.  Given models $X,Y:\mathbb T \rightrightarrows \C$ a \emph{pseudo/lax} morphism of models $\theta:X \to Y$ is a pseudonatural/lax natural transformation with the property that 
\begin{itemize}
\item if $f:L \to Dj$ is an \emph{initial limit projection} then the naturality $2$-cell $\theta_{f}:Yf.\theta_{X} \Rightarrow \theta_{Y}.Xf$ is the \emph{identity}.
\end{itemize}
\end{Def}
In each case, the $2$-cells between morphisms of models are modifications.  We write $\mathbf{Mod}_{s}(\mathbb T,\C), \mathbf{Mod}_{p}(\mathbb T,\C)$ and $\mathbf{Mod}_{l}(\mathbb T,\C)$ for the $2$-categories of models together with the strict, pseudo and lax morphisms of models respectively.

Henceforth, we will suppose further that $\Phi$ is a class of \emph{finite} pie weights.  We write $\ff_s$ for the $2$-category containing the $\Phi$-limit theories, whose morphisms are $2$-functors preserving $\Phi$-limits strictly, and whose morphisms are $2$-natural transformations.  

By Theorem 6.1 of \cite{Kelly2000On}, the forgetful $2$-functor $U:\ff_{s} \to \twocat$ has a left adjoint $F$. and is monadic.  Since each weight $W \in \Phi$ is finitely presentable, $U$ also preserves filtered colimits --- therefore, by Theorem 3.8 of \cite{Blackwell1989Two-dimensional} the $2$-category $\ff_s$ is cocomplete.  
Let $J$ be the set of generating cofibrations in $\twocat$, whereby we can consider the resulting set $FJ \in \ff_{s}$.  By a \emph{cellular theory}, by we mean an $FJ$-cellular object in $\ff_{s}$.  

Our main theorem in this context, to be proved in a followup paper on two-dimensional limit theories, is the following one.  Ultimately, after taking care of various subtleties, this will follow naturally from Theorem~\ref{thm:cellular}.
\begin{Theorem}\label{thm:limittheories}
Let $\Phi$ be a small class of finite pie weights with the additional property that the source of each weight $W \in \Phi$ is a cellular $2$-category.  Let $\mathbb T$ be a cellular $\Phi$-limit theory and $\C \in \Lpm$.  Then $\mathbf{Mod}_{p}(\mathbb T,\C) \in \Lpm$ with flexible limits and filtered colimits pointwise as in $\C$.
\end{Theorem}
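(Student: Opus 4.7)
My plan is to obtain $\mathbf{Mod}_p(\mathbb T, \C)$ as the output of a short tower of pullbacks in $\Lpm$, starting from $Ps(\mathbb T, \C) \in \Lpm$. The latter holds by Theorem~\ref{thm:cellular}(1) since $\mathbb T$ is cellular and $\C \in \Lpm$, and it delivers flexible limits and filtered colimits pointwise as in $\C$. Two restrictions must then be imposed, each indexed by the small collection of specified $\Phi$-limit data in $\mathbb T$: an object-level condition (limit preservation) and a morphism-level condition (strictness at initial projections).

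For the object-level condition, fix a specified limit $L = \{W, D\}$ with $W \in \Phi$ and $D : J \to \mathbb T$, where $J$ is cellular by the standing hypothesis on $\Phi$. The canonical comparison $\phi^{W,D}_X : X(L) \to \{W, XD\text{-}\}$ assembles into a $2$-functor $\phi^{W,D} : Ps(\mathbb T, \C) \to Ps(\atwo, \C)$. Checking $\phi^{W,D} \in \Lpm$ reduces via Lemma~\ref{lem:2from3}(2) to its two evaluations into $\C$: the first, $X \mapsto X(L)$, lies in $\Lpm$ by Theorem~\ref{thm:cellular}; the second, $X \mapsto \{W, XD\text{-}\}$, factors as the restriction $D^* : Ps(\mathbb T, \C) \to Ps(J, \C)$ (in $\Lpm$ via Lemma~\ref{lem:2from3}(2) applied to $Ps(J, \C) \in \Lpm$) followed by the finite pie weighted limit functor $\{W, -\}$, which commutes with filtered colimits because $W$ is a finite pie weight and $\C \in \Lpm$. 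The full sub-$2$-category of isomorphisms in $Ps(\atwo, \C)$ is equivalent to $\C$, closed under flexible limits and filtered colimits, and its inclusion is an isofibration in $\Lpm$. Taking the product over all specified $\Phi$-limit data and pulling back along this isofibration yields, by Theorem~\ref{thm:limits2}, the $\Lpm$-member $\mathbf N$ of $\Phi$-limit-preserving $2$-functors with \emph{all} pseudonatural transformations as morphisms.

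For the morphism-level condition, and for each initial limit projection $f : L \to Dj$, I must cut $\mathbf N$ down to those pseudonatural $\theta$ with $\theta_f = \mathrm{id}$. My plan here is to exploit the Gray-tensor adjunction $\twocat_{0}(\mathbb T \otimes \atwo, \C) \cong \twocat_{0}(\mathbb T, Ps(\atwo, \C))$: a pseudonatural transformation $\theta$ corresponds to a $2$-functor $\mathbb T \otimes \atwo \to \C$, and $\theta_f = \mathrm{id}$ corresponds to factorisation through the quotient $\mathbb T \odot \atwo$ obtained from $\mathbb T \otimes \atwo$ by pushing out along $J_3 : P_2 \to D_2$ at the Gray $2$-cell arising from $f$. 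Reorganising this data as a pullback of isofibrations $Ps(F, \C)$ supplied by Theorem~\ref{thm:cellular}(2), for suitable cellular $2$-functors $F$ between cellular presentations, then yields $\mathbf{Mod}_p(\mathbb T, \C) \in \Lpm$ via Theorem~\ref{thm:limits2}. Pointwise-ness of flexible limits and filtered colimits is preserved throughout, since every pullback used factors through pointwise evaluation.

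\textbf{Main obstacle.} The morphism-level restriction is the technical core. A naive pullback against $\C^{\atwo} \hookrightarrow Ps(\atwo, \C)$ is unavailable, since $\C^{\atwo}$ generally fails to lie in $\Lpm$ (Example~\ref{ex:2}) and the inclusion is not an isofibration. Realising strictness as a pullback of isofibrations via the Gray-tensor construction, while staying within a class of $2$-categories to which Theorem~\ref{thm:cellular}(2) applies, requires careful bookkeeping of which derived presentations remain cellular --- presumably why the author defers the full treatment to the followup paper.
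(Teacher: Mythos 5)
The paper does not actually prove this theorem: it is stated at the end of Section~\ref{sect:limittheories} with an explicit deferral (``to be proved in a followup paper on two-dimensional limit theories''), so there is no in-paper argument to compare yours against. Judged on its own terms, your proposal has a genuine gap at its very first step. You invoke Theorem~\ref{thm:cellular}(1) to get $Ps(\mathbb T,\C)\in\Lpm$ ``since $\mathbb T$ is cellular'', but the hypothesis of the theorem is that $\mathbb T$ is a \emph{cellular $\Phi$-limit theory}, which the paper defines as an $FJ$-cellular object of $\ff_s$ --- cellularity relative to the free/forgetful adjunction $F\dashv U:\ff_s\to\twocat$ --- not as a cellular $2$-category in the sense of Section~\ref{sect:cellular2}. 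The underlying $2$-category $U\mathbb T$ of such a theory contains the freely adjoined $\Phi$-limits, whose strict universal properties impose equations between $1$-cells (projection identities and the like), so $U\mathbb T$ is generally not free on a graph at the level of $1$-cells and hence not cellular. Theorem~\ref{thm:cellular} therefore does not apply to $Ps(\mathbb T,\C)$, and the base of your tower of pullbacks is unsupported. The natural repair --- and presumably the route the followup paper takes, since the remark preceding the theorem says it ``will follow naturally from Theorem~\ref{thm:cellular}'' --- is to induct over the cellular presentation of $\mathbb T$ \emph{in $\ff_s$}, using that $\mathbf{Mod}_p(F\A,\C)$ for $\A$ a cellular $2$-category should be identified with $Ps(\A,\C)$ via the adjunction, and that pushouts along $FJ_i$ become pullbacks along $Ps(J_i,\C)$; but this identification of pseudomorphisms of models with pseudonatural transformations on generators is itself one of the ``subtleties'' being deferred.

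Two further points would need attention even granting the base case. First, your comparison $2$-functor $\phi^{W,D}:X\mapsto(X(L)\to\{W,XD\text{-}\})$ requires $\{W,-\}$ to be $2$-functorial on \emph{pseudonatural} transformations of diagrams; strict weighted limits are functorial in $2$-natural transformations, and for a general finite pie weight $W$ a pseudonatural $\theta:XD\to YD$ does not canonically induce a morphism $\{W,XD\}\to\{W,YD\}$ (this works for pseudolimit-type weights $QV$, but not obviously for arbitrary $W\in PIE^*$). Second, you correctly identify the morphism-level strictness condition at initial projections as the technical core and observe that a naive pullback against $\C^{\atwo}\hookrightarrow Ps(\atwo,\C)$ fails by Example~\ref{ex:2}; your Gray-tensor quotient idea is a reasonable direction, but as you concede, the cellularity of the resulting presentations is exactly the unresolved bookkeeping, so the proposal stops short of a proof precisely where the paper itself stops.
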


The definitions of the weaker kinds of morphism in this context are somewhat subtle --- the situation becomes more transparent if we view a pie weight $(W,\Omega_{W})$ as an $\F$-weight in the sense of \cite{Lack2012Enhanced} and pass from $\Cat$-enriched limit theories to $\F$-enriched ones.  

Recall that an $\F$-category is specified by a $2$-category equipped with a subcollection of \emph{tight} morphisms closed under composition and containing the identities.  General morphisms are called loose.  The point is that $\F$-categories allow one to keep track of which morphisms are to be treated strictly.  Even for simple structures such as double categories --- that is, internal categories in $\Cat$ --- it appears that $\F$-theories are required to capture the correct pseudo and lax maps.  Indeed, the correct notion of lax morphism in this context is that of a lax double functor --- this involves a graph map as below left
\begin{equation*}
\begin{xy}
(0,0)*+{X_1}="00";(25,0)*+{Y_1}="10";(00,-15)*+{X_0}="01";(25,-15)*+{Y_0}="11";
{\ar^{f_1} "00"; "10"};{\ar@<-1ex>_{s_X} "00"; "01"};{\ar@<1ex>^{t_X} "00"; "01"};{\ar@<-1ex>_{s_Y} "10"; "11"};{\ar@<1ex>^{t_Y} "10"; "11"};{\ar_{f_0} "01"; "11"};
\end{xy}
\hspace{1cm}
\begin{xy}
(0,0)*+{X_0}="00";(20,0)*+{Y_0}="10";(0,-15)*+{X_1}="01";(20,-15)*+{Y_1}="11";
{\ar^{f_0} "00"; "10"};{\ar_{i_X} "00"; "01"};{\ar^{i_Y} "10"; "11"};{\ar_{f_1} "01"; "11"};
{\ar@{=>}^{f_i}(13,-5)*+{};(7,-10)*+{}};
\end{xy}
\end{equation*}
but a natural transformation as depicted on the right above, whose components become identities on composing with $t_Y$ and $s_Y$ respectively.  There is also a natural transformation comparing the composition functors for the internal categories.  The point is that, in this case, one needs to keep track of the fact that the source and target maps $s,t:1 \rightrightarrows 0$ in the corresponding \emph{theory} are tight, whilst $i:0 \rightsquigarrow 1$ is loose.  Whilst double categories and their pseudo and lax maps cannot, as far as we can tell, be captured using $\Cat$-enriched theories they can be captured using $\F$-theories.  We will discuss these issues further in the sequel.


\section{Appendix}\label{appendix}

Recall that $f:A \to B$ is an isofibration if given $g:X \to A$, $h:X \to B$ and $\alpha:f \circ g \cong h$ there exists $\alpha^{\prime}:g \cong h^{\prime}$ such that $f \circ \alpha^{\prime} = \alpha$.  A cleavage for an isofibration consists of a choice, for each $(g,\alpha,h)$, of a pair $(\alpha^{\prime},h^{\prime})$ as above such that these choices are \emph{natural in $X$}.  The cleavage is said to be normal if whenever $\alpha$ is an identity $2$-cell so is $\alpha^{\prime}$ and a normal isofibration \cite{Garner2009Two-dimensional} is, by definition, a morphism supporting a normal cleavage.  Clearly each discrete isofibration is normal.

Consider the pseudolimit $Pf$ of $f$ as below left.  
\begin{equation*}\label{eq:normal}
\xy
(0,0)*+{Pf}="a0"; (-15,-15)*+{A}="b0";(15,-15)*+{B}="c0";
{\ar_{p_{f}} "a0"; "b0"}; 
{\ar^{q_{f}} "a0"; "c0"}; 
{\ar_{f} "b0"; "c0"}; 
(1,-9)*{{\cong}_{\lambda_{f}}};
\endxy
\hspace{1cm}
\xy
(0,0)*+{A}="a0"; (15,0)*+{Pf}="b0";(30,0)*+{A}="c0";(15,-15)*+{B}="d0";
{\ar^{r_{f}} "a0"; "b0"}; 
{\ar^{x} "b0"; "c0"}; 
{\ar_{f} "a0"; "d0"}; 
{\ar_{p_{f}} "b0"; "d0"}; 
{\ar^{f} "c0"; "d0"}; 
\endxy
\end{equation*}
It is an easy exercise to see that $f$ admits the structure of a normal isofibration just when $r_{f}:A \to Pf$ admits a section $x$ in the slice over $B$, as depicted above right.

\begin{Prop}\label{prop:normal}
For a $2$-category $\C$ the following are equivalent.
\begin{enumerate}
\item $\C$ has flexible limits.
\item $\C$ has products, powers, splittings of idempotents and pullbacks of discrete isofibrations.
\item $\C$ has products, powers, splittings of idempotents and pullbacks of normal isofibrations.
\end{enumerate}
\end{Prop}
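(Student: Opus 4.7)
My plan is to prove the cycle $(1) \Rightarrow (2) \Rightarrow (3) \Rightarrow (1)$, organised around a single key construction: the strict pullback along a normal (and in particular, a discrete) isofibration arises as the splitting of a canonical idempotent on the pseudopullback. This construction, already flagged in Section~\ref{sect:monadic4}, will be the main tool of both $(1) \Rightarrow (2)$ and $(2) \Rightarrow (3)$.

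For $(1) \Rightarrow (2)$, products, powers and splittings of idempotents are all flexible, the first two by Proposition~5.2 of \cite{Bird1989Flexible} and the last by definition. To pull back a discrete isofibration $f : A \to C$ along $g : B \to C$, I first form the pseudopullback $P$ with projections $a : P \to A$, $b : P \to B$ and iso $\alpha : fa \cong gb$; this is a pie limit and hence flexible. The discrete cleavage of $f$ yields a unique $a' : P \to A$ with $fa' = gb$ together with an iso $a \cong a'$ whose $f$-image is $\alpha$. The universal property of $P$ converts these data into an endomorphism $e : P \to P$ with $ae = a'$, $be = b$ and identity $\alpha$-component, and splitting $e$ produces the strict pullback.

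For $(2) \Rightarrow (3)$, I first construct the pseudopullback of a general cospan out of the ingredients of (2). For $I$ the free isomorphism, the canonical map $(\mathrm{ev}_0, \mathrm{ev}_1) : C^I \to C \times C$ is a discrete isofibration (this can be checked directly in $\Cat$ and transferred by representability of powers), and pulling it back along $(f, g) : A \times B \to C \times C$ yields the pseudopullback of $f$ and $g$. With pseudopullbacks available, the idempotent-splitting construction from $(1) \Rightarrow (2)$ applies verbatim to any normal isofibration $f$, using the characterisation recalled in the appendix of a normal cleavage as the existence of a retraction $x : Pf \to A$ of $r_f$ in the slice over $C$. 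I expect the main obstacle to be verifying that $e$ is genuinely idempotent: this step uses precisely the normality condition, namely that the cleavage sends identity isomorphisms to identities.

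For $(3) \Rightarrow (1)$, I will invoke the classical fact that the class of flexible weights is the saturation of products, inserters, equifiers and splittings of idempotents. From (3) we inherit products, splittings of idempotents and pullbacks of discrete isofibrations (every discrete isofibration being normal). The inserter of $f, g : A \rightrightarrows B$ can then be obtained as the pullback of $(f, g) : A \to B \times B$ along the discrete isofibration $(\mathrm{ev}_0, \mathrm{ev}_1) : B^{\atwo} \to B \times B$, while the equifier of a parallel pair $\alpha, \beta : f \Rightarrow g$ is the pullback of the classifying map $A \to B^{P_1}$ along the discrete isofibration $B^{\nabla} : B^{\atwo} \to B^{P_1}$ induced by the codiagonal $\nabla : P_1 \to \atwo$. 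In each case the discrete isofibration property reduces by representability to a direct inspection in $\Cat$.
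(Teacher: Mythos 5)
Your proof is correct, and the two key constructions it rests on are exactly the ones the paper uses: (i) inserters and equifiers realised as pullbacks along the discrete isofibrations $B^{j}$ and $B^{\nabla}$ induced by bijective-on-objects maps, and (ii) the strict pullback of a normal isofibration obtained by splitting a canonical idempotent on the pseudopullback, with normality (plus naturality of the cleavage) being precisely what makes the endomorphism idempotent. What differs is the decomposition: the paper proves $(3)\Rightarrow(2)$ trivially (every discrete isofibration is normal) and then $(2)\Rightarrow(1)$ and $(1)\Rightarrow(3)$, so the idempotent-splitting argument is run only once, in a setting where all flexible limits --- in particular pseudopullbacks --- are already available. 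Your cycle $(1)\Rightarrow(2)\Rightarrow(3)\Rightarrow(1)$ runs the idempotent argument twice and therefore needs the additional observation that pseudopullbacks are themselves constructible from the data of $(2)$, as pullbacks of the discrete isofibration $(\mathrm{ev}_0,\mathrm{ev}_1):C^{I}\to C\times C$. That extra lemma is correct and is in the same spirit as the inserter/equifier constructions, but it is work the paper's ordering of the implications avoids; on the other hand, your version of the idempotent is built directly on the pseudopullback of the cospan rather than, as in the paper, on the pseudolimit $Pf$ in the slice over the codomain and then pulled back, which arguably makes the verification that its splitting has the universal property of the strict pullback more transparent. Both routes are complete; yours is marginally longer but entirely sound.
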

\begin{proof}
Certainly $(3 \implies 2)$.  Let us show that $(2 \implies 1)$.  For this, it suffices to show that $\C$ admits inserters and equifiers.  The inserter of $f,g:A \rightrightarrows B$ is the pullback below left
\begin{equation*}
\xymatrix{
Ins(f,g)\ar[d] \ar[r] & B^{\atwo}\ar[d]^{B^{j}} \\
A \ar[r]_{\langle f,g\rangle} & B^{2}
}
\hspace{1cm}
\xymatrix{
Eq(\alpha,\beta) \ar[d] \ar[r] & B^{D_1}\ar[d]^{B^{\nabla}} \\
A \ar[r]_{\langle \alpha,\beta \rangle} & B^{P_1}
}
\end{equation*}
in which $j:2 \to \atwo$ is the boundary inclusion.  Since this is bijective on objects it is a discrete isocofibration, whence the right vertical is a discrete isofibration.  The equifier of $\alpha,\beta \in \C(A,B)(f,g)$ is the pullback above right in which $\nabla:P_1 \to D_1$ is the codiagonal identifying the parallel pair --- since this is bijective on objects the same reasoning applies.

To prove $(1 \implies 3)$ we must show that a $2$-category with flexible limits admits pullbacks of normal isofibrations.  Now let $f:B \to C$ be a normal isofibration and consider $g:A \to C$ arbitrary.  To form the pullback of $f$ and $g$ we first form the pseudopullback of $f$ and $g$,  which is equally the \emph{pullback} below.
\begin{equation*}
\begin{xy}
(0,0)*+{Ps(f,g)}="00";(20,0)*+{Pf}="10";(0,-15)*+{C}="01";(20,-15)*+{B}="11";
{\ar^-{p} "00"; "10"};{\ar_{q} "00"; "01"};{\ar^{p_f} "10"; "11"};{\ar_{g} "01"; "11"};
\end{xy}
\end{equation*}
Since $f$ is a normal isofibration, with $x:Pf \to A$ as in \eqref{eq:normal}, we obtain an idempotent $t=x \circ r_f$ on $p_f:Pf \to B$ in $\C/B$.  Since the idempotent lives in $\C/B$ it pulls back to an idempotent $t^{\prime}$ on $p:Ps(f,g) \to C$.  Since pullback along $f$, in so far as it exists, preserve split idempotents, the idempotent splitting of $t^{\prime}$ is equally the pullback of $f$ along $g$.
\end{proof}

%

\end{document}